\definecolor{mydarkblue}{RGB}{0,0,155}
\author{
  \textbf{Matthias Schötz}
  \thanks{Boursier de l'ULB, \href{mailto:Matthias.Schotz@ulb.ac.be}{\texttt{Matthias.Schotz@ulb.ac.be}}.
  This work was supported by the Fonds de la Recherche Scientifique (FNRS) and the Fonds Wetenschappelijk
  Onderzoek - Vlaaderen (FWO) under EOS Project n$^0$30950721.}\\
  Département de Mathématique\\
  Université libre de Bruxelles
}
\newcommand{\refitem}[1] {\textit{\ref{#1}.)}}
\numberwithin{equation}{section}
\renewcommand{\arraystretch}{1.2}
\let\originalleft\left
\let\originalright\right
\renewcommand{\left}{\mathopen{}\mathclose\bgroup\originalleft}
\renewcommand{\right}{\aftergroup\egroup\originalright}
\newcommand{\lemmachairxname}{Lemma}
\newcommand{\propositionchairxname}{Proposition}
\newcommand{\theoremchairxname}{Theorem}
\newcommand{\corollarychairxname}{Corollary}
\newcommand{\definitionchairxname}{Definition}
\newcommand{\examplechairxname}{Example}
\newcommand{\proofchairxname}{Proof}
\SetString{\lemmachairxname}{Lemma}
\SetString{\propositionchairxname}{Proposition}
\SetString{\theoremchairxname}{Theorem}
\SetString{\corollarychairxname}{Corollary}
\SetString{\definitionchairxname}{Definition}
\SetString{\examplechairxname}{Example}
\SetString{\proofchairxname}{Proof}
\newtheorem{lemma}{\lemmachairxname}[section]
\newtheorem*{lemmaNN}{\lemmachairxname}
\newtheorem{proposition}[lemma]{\propositionchairxname}
\newtheorem{theorem}[lemma]{\theoremchairxname}
\newtheorem*{theoremNN}{\theoremchairxname}
\newtheorem{corollary}[lemma]{\corollarychairxname}
\newtheorem{definition}[lemma]{\definitionchairxname}
\newtheorem{example}[lemma]{\examplechairxname}
\def\theorem@checkbold{}
\theoremstyle{nonumberplain}
\newtheorem{proof}{\proofchairxname}
\newcommand{\Unit}           {\mathbbm{1}}
\newcommand{\id}              {\mathrm{id}}
\newcommand{\argument}       {\ignorespaces{\,\cdot\,}\ignorespaces}
\newcommand{\D}              {\mathop{}\!\mathrm{d}}
\DeclarePairedDelimiter{\abs}{\lvert}{\rvert}
\DeclarePairedDelimiter{\norm}{\lVert}{\rVert}
\newcommand{\Stetig}         {\mathscr{C}}
\newcommand{\cc}[1]          {\overline{{#1}}}
\newcommand{\I}              {\mathrm{i}}
\DeclarePairedDelimiter{\ordinaryIP}{\langle}{\rangle}
\DeclarePairedDelimiter{\ordinarySet}{\{}{\}}
\newcommand{\RR}{\mathbbm{R}}
\newcommand{\CC}{\mathbbm{C}}
\newcommand{\NN}{\mathbbm{N}}
\newcommand{\skal}[3][]{\ordinaryIP[#1]{\,#2 \,#1|\, #3\,}}
\newcommand{\set}[3][]{\ordinarySet[#1]{\,#2 \;#1|\; #3\,}}
\newcommand{\cl}{\mathrm{cl}}
\newcommand{\acl}{{a\textup{-}\mathrm{cl}}}
\newcommand{\seminorm}[3][]{\norm[#1]{#3}_{#2}}
\newcommand{\Hilb}{\mathfrak{H}}
\newcommand{\neu}[1]{\uline{#1}}
\newcommand{\mult}[1]{{{\mathrm M}_{#1}}}
\title{
Stone-Weierstraß Theorems\\
for Riesz Ideals of Continuous Functions%
}
\date{February 2019}
\begin{document}
\begin{onehalfspace}
\allowdisplaybreaks
\maketitle

\begin{abstract}
  Notions of convergence and continuity specifically adapted 
  to Riesz ideals $\mathcal{I}$ of the space of continuous real-valued functions
  on a Lindelöf locally compact Hausdorff space are given, and used to prove 
  Stone-Weierstraß-type theorems for $\mathcal{I}$.
  As applications, sufficient conditions are discussed that guarantee that various types of 
  positive linear maps on $\mathcal{I}$ are uniquely determined by their restriction 
  to various point-separating subsets of $\mathcal{I}$. A very special case of this
  is the characterization of the strong determinacy of moment problems,
  which is rederived here in a rather general setting and without making use of spectral theory.
\end{abstract}

\section{Introduction}
\label{sec:Introduction}
Riesz ideals $\mathcal{I}$ of the lattice-ordered algebra $\Stetig(X)$ of continuous real-valued functions 
on a locally compact Hausdorff space $X$ arise naturally as the continuous functions 
which are integrable with respect to a given positive Borel measure $\mu$ on $X$. The 
question of determinacy of a moment problem can be phrased as the question, whether for
two positive Borel measures $\mu,\mu'$ on $X$, whose integrals coincide 
on a given point-separating subset $S$ of $\mathcal{I}$, the integrals even coincide on whole $\mathcal{I}$
(which, under suitable regularity assumptions on $\mu$ and $\mu'$, implies that $\mu=\mu'$).

As an example, consider the determinacy of the Hamburger moment problem: Given two
positive Borel measures $\mu$ and $\mu'$ on $\RR$ such that 
$\int_\RR p\,\D \mu = \int_\RR p\,\D \mu' \in \RR$ holds for all polynomial
functions $p$ on $\RR$, do $\mu$ and $\mu'$ coincide? If $\mu$ has compact support,
then so does $\mu'$ (see e.g. \cite[Prop.~4.1]{schmuedgen:TheMomentProblem}), and 
in this case both integrals describe linear functions from
$\Stetig(\RR)$ to $\RR$ that are continuous with respect to the Fréchet topology
of $\Stetig(\RR)$, i.e. the locally convex topology of uniform convergence on all compact subsets of $\RR$.
As the polynomial functions are dense in $\Stetig(\RR)$ with respect to this topology
by the classical Stone-Weierstraß theorem, the integrals over $\mu$ and $\mu'$ coincide on
whole $\Stetig(\RR)$. However, if $\mu$ does not have compact support, then it is no longer
continuous in this topology (and only describes a linear functional on a true Riesz
ideal $\mathcal{I}$ of $\Stetig(\RR)$, as all positive linear functionals defined  on the 
whole Fréchet algebra $\Stetig(\RR)$ are necessarily continuous by \cite{ng.warner:ContinuityOfPositiveAndMultiplicativeFunctionals}). 
If one wants to keep the same straightforward line of reasoning as in the compactly
supported case, an adapted notion of convergence in $\mathcal{I}$ is needed.

This adapted notion of convergence of sequences, together with those of Cauchy sequences,
closedness, closure and continuity,
is given in Definition~\ref{definition:convergence} and is more order theoretic than
topological in nature. Proposition~\ref{proposition:Cauchy} and its Corollary~\ref{corollary:convergence}
then clarify the relation to the analogous notions coming from the locally convex topology of uniform convergence 
on all compact subsets, and essentially establish the completeness of $\mathcal{I}$.
The main results are Theorems~\ref{theorem:convergence}
and \ref{theorem:convergeAlg}, which are generalizations of the classical
Stone-Weierstraß theorem (an order theoretic and an algebraic one), and Theorem~\ref{theorem:autocont},
which complements these by a result about automatic continuity of certain positive linear maps.
Using the generalized Stone-Weierstraß theorems, a characterization of the (strong)
determinacy of moment problems is given in the final Theorem~\ref{theorem:determinacy}.
Here, a self-adjointness condition is needed to assure that Theorem~\ref{theorem:convergeAlg}
can be applied.

Acknowledgements:
I would like to thank Prof. K. Schmüdgen for some valuable hints and remarks.
\section{Preliminaries}
The natural numbers are $\NN = \{1,2,3,\dots\}$, $\NN_0 \coloneqq \NN\cup\{0\}$ and 
the sets of real and complex numbers are denoted by $\RR$ and $\CC$, respectively.

Throughout this note, $X$ is a non-empty locally compact Hausdorff space which
is additionally Lindelöf, i.e. every open cover of $X$ has a countable subcover.
By \cite[Chap.~5, Thms.~17--18]{kelley:GeneralTopology}, every locally compact
Hausdorff space is a Tychonoff space, so for every $x\in X$ and every open neighbourhood
$U$ of $x$ there exists a continuous function $f\colon X\to[0,1]$
such that $f(x) = 1$ and $f(y) = 0$ for all $y\in X\backslash U$. The assumption
that $X$ is Lindelöf has one important and well-known consequence:
\begin{lemmaNN} \label{lemma:exhaustion}
  The Lindelöf and locally compact Hausdorff space $X$ admits a countable exhaustion 
  by compact sets, i.e. there exists a sequence $(K_n)_{n\in \NN}$ of compact subsets
  of $X$ such that $X = \bigcup_{n\in \NN} K_n$ and which is strictly increasing in 
  the sense that $K_n$ is contained in the interior of $K_{n+1}$ for every $n\in \NN$.
\end{lemmaNN}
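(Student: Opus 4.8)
The plan is the familiar inductive construction of an exhaustion, using local compactness to control closures and the Lindelöf property to pass to a countable index set. First I would record the pointwise consequence of local compactness: since $X$ is locally compact and Hausdorff, every $x\in X$ has an open neighbourhood $U_x$ whose closure $\cc{U_x}$ is compact. The family $\set{U_x}{x\in X}$ is then an open cover of $X$, so by the Lindelöf hypothesis it admits a countable subcover, which I write as $(U_n)_{n\in\NN}$; thus $X = \bigcup_{n\in\NN} U_n$ and each $\cc{U_n}$ is compact.

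Next I would construct the desired sequence $(K_n)_{n\in\NN}$ recursively. Set $K_1 \coloneqq \cc{U_1}$. Assuming $K_n$ has been defined as a compact set, note that $K_n$ is covered by the open family $(U_m)_{m\in\NN}$, so by compactness finitely many of them, say $U_{m_1},\dots,U_{m_k}$, already cover $K_n$. I then put
\[
  K_{n+1} \;\coloneqq\; \cc{U_{n+1}} \cup \cc{U_{m_1}} \cup \dots \cup \cc{U_{m_k}} ,
\]
a finite union of compact sets and hence compact.

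Finally I would check the two required properties. For the strict increase, the set $W \coloneqq U_{n+1}\cup U_{m_1}\cup\dots\cup U_{m_k}$ is open, contains $K_n$ (since the $U_{m_j}$ cover $K_n$), and satisfies $W\subseteq K_{n+1}$; therefore $K_n \subseteq W \subseteq \mathrm{int}(K_{n+1})$. For the exhaustion property, by construction $U_1\subseteq K_1$ and $U_{n+1}\subseteq K_{n+1}$ for every $n\in\NN$, so $\bigcup_{n\in\NN} K_n \supseteq \bigcup_{n\in\NN} U_n = X$, while the reverse inclusion is trivial; hence $X = \bigcup_{n\in\NN} K_n$.

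I do not expect any genuine obstacle here; the only point needing a little care is to arrange simultaneously that the sequence both exhausts $X$ and is \emph{strictly} increasing. This is handled by throwing $\cc{U_{n+1}}$ into $K_{n+1}$ in addition to the finite subcover of $K_n$: the former term forces $\bigcup_n K_n = X$, while the finite subcover of $K_n$ forces $K_n$ to lie in the interior of $K_{n+1}$.
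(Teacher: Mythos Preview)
Your argument is correct. It differs from the paper's proof in its organisation: the paper avoids recursion by first producing, via the Tychonoff separation property, continuous bump functions $f_{x_m}\colon X\to[0,1]$ supported in relatively compact sets, and then setting $K_n \coloneqq \bigcup_{m=1}^n f_{x_m}^{-1}\big([1/n,\infty[\big)$ in one explicit formula; the strict increase is then read off from the nesting of the level sets $f_{x_m}^{-1}\big([1/n,\infty[\big)\subseteq f_{x_m}^{-1}\big(]1/(n{+}1),\infty[\big)$. Your version is the more classical inductive construction and is arguably more elementary, since it uses local compactness directly rather than going through continuous separating functions; the price is that the $K_n$ are defined recursively and one has to argue a little to ensure the exhaustion property, which you handle correctly by adjoining $\cc{U_{n+1}}$ at each step. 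Either approach is perfectly adequate here.
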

\begin{proof}
  For convenience of the reader, a proof is given:
  For every $x\in X$ there exists an open neighbourhood $U_x$ of $x$
  that has compact closure, and then also a continuous function $f_x \colon X\to [0,1]$
  fulfilling $f_x(x) = 1$ and $f_x(y) = 0$ for all $y\in X\backslash U_x$, so
  $x\in f_x^{-1}\big(]0,\infty[\big) \subseteq U_x$. As these open sets
  $f_x^{-1}\big(]0,\infty[\big)$ with $x$ ranging over $X$ cover whole $X$, there exists 
  a sequence $(x_m)_{m\in \NN}$ in $X$ such that $X=\bigcup_{m\in\NN} f_{x_m}^{-1}\big(]0,\infty[\big)$.
  For every $n\in \NN$, the subset $K_n \coloneqq \bigcup_{m=1}^n f_{x_m}^{-1}\big( [1/n, \infty[ \big)$
  of $X$ is a closed subsets of the compact closure of $\bigcup_{m=1}^n U_{x_m}$, hence compact.
  Moreover, it follows from $K_n \subseteq \bigcup_{m=1}^n f_{x_m}^{-1}\big( ]1/(n+1), \infty[ \big) \subseteq K_{n+1}$
  that the sequence $(K_n)_{n\in \NN}$ is strictly increasing. Finally,
  for every $\hat{x}\in X$ there exists an $m\in \NN$ such that $\hat{x}\in f_{x_m}^{-1}\big( ]0, \infty[ \big)$
  by construction of the sequence $(x_m)_{m\in \NN}$, and then also an $n\in \NN$, $n\ge m$,
  such that even $\hat{x}\in f_{x_m}^{-1}\big( [1/n, \infty[ \big)$, hence $\hat{x}\in K_n$.
\end{proof}
As a consequence, every compact subset $K'$ of $X$ is eventually contained in one
of the $K_n$ constructed above with sufficiently large $n\in \NN$, because their
interiors cover $K'$. Important examples of
Lindelöf spaces are second countable spaces, i.e. topological spaces whose
topology admits a countable base, see e.g. \cite[Chap.~1, Thm.~15]{kelley:GeneralTopology}.

If $P$ and $Q$ are partially ordered sets, then a map $\Phi \colon P\to Q$ is called 
\neu{increasing} (\neu{decreasing}) if $\Phi(p) \le \Phi(p')$ (or $\Phi(p) \ge \Phi(p')$, respectively) holds 
for all $p,p'\in P$ with $p\le p'$.

An \neu{ordered vector space} is a real vector space $V$ endowed with a partial order
$\le$ such that $\lambda u \le \lambda v$ and $u + w \le v + w$ hold for
all $\lambda \in {[0,\infty[}$ and all $u,v,w\in V$ with $u\le v$.
Moreover, $V$ is called \neu{Archimedean} if it has the following property: Whenever $v\le \epsilon w$ 
holds for two vectors $v\in V$, $w\in V^+$ and all $\epsilon \in {]0,\infty[}$, then $v \le 0$. 
For every ordered vector space $V$, the convex cone of \neu{positive
elements} is $V^+ \coloneqq \set{v\in V}{v\ge 0}$.
If $V$ and $W$ are two ordered vector spaces, then a linear map $\Phi\colon V\to W$
is increasing if and only if $\Phi(v) \in W^+$ for all $v\in V^+$. Such an increasing
linear map is called \neu{positive}. Every linear subspace $S$ of an ordered vector
space $V$ is again an ordered vector space with the order inherited from $V$.

A \neu{Riesz space} (or vector lattice) is an ordered vector space $\mathcal{R}$ in which 
$\abs{r} \coloneqq \sup \{r,-r\}$, the \neu{absolute value} of $r$,
exists for all $r\in \mathcal{R}$. In this case, supremum and infimum of all pairs of elements $r,s\in\mathcal{R}$
exist in $\mathcal{R}$, namely $r \vee s \coloneqq \sup \{r,s\} = (r+s)/2 + \abs{(r-s)/2}$
and $r \wedge s \coloneqq \inf \{r,s\} = (r+s)/2 - \abs{(r-s)/2}$, and $\vee$ as well as
$\wedge$ describe associative and commutative operations on $\mathcal{R}$. So it makes sense to write
$\bigvee_{n=1}^N r_n \coloneqq r_1 \vee \dots \vee r_N = \sup \{r_1,\dots,r_N\}$
for $N\in \NN$ and $r_1,\dots,r_N \in \mathcal{R}$, analogously for finite infima.
A \neu{Riesz subspace} of $\mathcal{R}$ is a linear
subspace $S\subseteq \mathcal{R}$ such that $\abs{s} \in S$ for all $s\in S$, where $\abs{s}$
denotes the absolute value of $s$ as an element of $\mathcal{R}$. Such a Riesz subspace $S$
is again a Riesz space whose finite suprema and infima coinicide with those in $\mathcal{R}$.
A \neu{Riesz ideal} of $\mathcal{R}$ is a linear subspace $\mathcal{I}$ of $\mathcal{R}$
with the property that, whenever $r \in \mathcal{R}$ and $s \in \mathcal{I}$ fulfil $\abs{r} \le \abs{s}$,
then $r\in \mathcal{I}$. A Riesz ideal is especially a Riesz subspace.
If $\mathcal{R}$ and $\mathcal{S}$ are two Riesz spaces, then a map $\Phi\colon \mathcal{R} \to \mathcal{S}$ is called
\neu{morphism of Riesz spaces} if $\Phi$ is linear and $\Phi(\abs{r}) = \abs[\big]{\Phi(r)}$
holds for all $r\in \mathcal{R}$. Such a morphism of Riesz spaces is especially positive. 
For details about Riesz spaces, see the standard textbooks on the subject, e.g.
\cite{luxemburg.zaanen:RieszSpacesI}.

The most important (Archimedean) Riesz space for the purpose of this note is $\Stetig(X)$, the set 
of all real-valued continuous functions on $X$, with pointwise operations and
pointwise order. Here $\big(f\vee g\big)(x) = \max\big\{f(x),g(x)\big\}$ and 
$\big(f\wedge g\big)(x) = \min\big\{f(x),g(x)\big\}$ for all $f,g\in \Stetig(X)$ and all $x\in X$.
The function on $X$ which is constant $1$ is denoted by $\Unit \in \Stetig(X)$.
A function $f\in \Stetig(X)$ is called \neu{uniformly bounded} if there exists a
$\lambda \in {[0,\infty[}$ such that $-\lambda \Unit \le f \le \lambda \Unit$.
The subset of all uniformly bounded functions in $\Stetig(X)$ is denoted by $\Stetig_b(X)$
and is a Riesz ideal and contained in every other Riesz ideal $\mathcal{I}$ of $\Stetig(X)$ with $\Unit \in \mathcal{I}$. 
Similarly, $\Stetig_c(X) \coloneqq \set[\big]{f\in \Stetig(X)}{ f^{-1}\big(\RR\backslash\{0\}\big)\text{ has compact closure} }$
is the Riesz ideal of $\Stetig(X)$ of \neu{compactly supported} functions.
Given $Y\subseteq X$, a sequence $(g_n)_{n\in \NN}$
in $\Stetig(X)$ and $\hat{g} \in \Stetig(X)$, then $(g_n)_{n\in \NN}$ is said to
\neu{converge uniformly on $Y$} against $\hat{g}$ if for all $\epsilon\in{]0,\infty[}$
there exists an $N \in \NN$ such that $\abs{\hat{g}(y) - g_n(y)} \le \epsilon$
holds for all $y\in Y$ and all $n\in \NN$ with $n\ge N$. Note that $\Stetig(X)$
is complete with respect to the locally convex topology of uniform convergence
on all compact subsets of $X$, and that it admits a countable fundamental system 
of continuous seminorms because of the existence of a countable exhaustion of $X$ 
by compact subsets. So $\Stetig(X)$ is a Fréchet space. However, we will be more
interested in Riesz ideals of $\Stetig(X)$ that are not closed in this topology.
Like for every other ordered vector space, one can discuss suprema and infima
of infinite sets in the Riesz space $\Stetig(X)$. But even if they exist,
they are not necessarily the pointwise ones.
If $(g_k)_{k\in \NN}$ is an increasing (or decreasing) sequence in $\Stetig(X)$ which has supremum (or infimum)
$\hat{g} \in \Stetig(X)$, then $\hat{g}$ is called the \neu{pointwise supremum} (or infimum)
if $\hat{g}(x) = \sup_{k\in \NN} g_k(x)$ (or $\hat{g}(x) = \inf_{k\in \NN} g_k(x)$, respectively)
for all $x\in X$. 

A subset $S$ of $\Stetig(X)$ is called \neu{point-separating}
if for all $x,y\in X$ with $x\neq y$ there exists an $s\in S$ fulfilling $s(x) \neq s(y)$. It is called
\neu{strictly point-separating} if for all $x,y\in X$ with $x\neq y$ there exists an $s\in S$ fulfilling 
$s(x) \neq 0$ and $s(y) = 0$. Moreover, $S$ is said to be \neu{nowhere-vanishing} if
for every $x\in X$ there exists an $s\in S$ with $s(x) \neq 0$. 
Note that a strictly point-separating subset $S$ of $\Stetig(X)$ is not necessarily
nowhere-vanishing if $X$ consists of only one point (for larger $X$, this is indeed true).
Moreover, a point-separating 
and nowhere-vanishing Riesz subspace $S$ of $\Stetig(X)$ is not strictly point-separating in general.
For example, consider the Riesz subspace 
$\set[\big]{f \in \Stetig\big([0,1]\big)}{f(0) = 2 f(1)}$ of $\Stetig\big([0,1]\big)$.
However, one easily sees that a point-separating linear subspace $S$ of $\Stetig(X)$ 
with $\Unit \in S$ is strictly point-separating and nowhere-vanishing.
Note also that whole $\Stetig(X)$ is strictly point-separating and nowhere-vanishing
because $X$ is especially a Tychonoff space, and that $\Stetig_c(X)$ is the smallest
Riesz ideal of $\Stetig(X)$ which is nowhere-vanishing: Indeed, if $\mathcal{I}$ is
a nowhere-vanishing Riesz ideal of $\Stetig(X)$ and $f\in\Stetig_c(X)$, then for every
$x \in X$ there exists a $g_x \in \mathcal{I}$ with $g_x(x) \neq 0$. One can even 
arrange $g_x \ge 0$ and $g_x(x) = 2$. Then the open subsets $g_x^{-1}\big( ]1,\infty[\big)$
of $X$ with $x$ ranging over $X$ cover the compact closure of $f^{-1}\big(\RR\backslash\{0\}\big)$,
hence there exist $x_1,\dots,x_N \in X$ with $N\in \NN$ such that $\hat{g} \coloneqq \sum_{n=1}^N g_{x_n} \in \mathcal{I}^+$
fulfils $\hat{g}(y) > 1$ for all $y\in f^{-1}\big(\RR\backslash\{0\}\big)$. As $f$ 
has compact support, it is uniformly bounded, so $\abs{f} \le \lambda \hat{g}$ holds 
for sufficiently large $\lambda \in {[0,\infty[}$ and thus $f\in \mathcal{I}$.

Finally, a \neu{ordered unital algebra} is a unital associative algebra $\mathcal{A}$ over the field $\RR$ which is
endowed with a partial order that turns $\mathcal{A}$ into an ordered vector space and in which
all squares are positive, i.e. $a^2 \in \mathcal{A}^+$ for all $a\in \mathcal{A}$. For example, 
$\Stetig(X)$, $\Stetig_b(X)$ and $\Stetig_c(X)$ are ordered algebras.
\section{Riesz Ideals of Continuous Functions} \label{sec:Main}
\begin{definition} \label{definition:convergence}
  Let $\mathcal{I}$ be a Riesz ideal of $\Stetig(X)$. Then a sequence $(g_n)_{n\in \NN}$
  in $\mathcal{I}$ is said to be \neu{strictly $\mathcal{I}$-convergent} against 
  an element $\hat{g} \in \mathcal{I}$, called the \neu{strict $\mathcal{I}$-limit}
  of $(g_n)_{n\in \NN}$, if there exists a decreasing sequence $(f_k)_{k\in \NN}$
  in $\mathcal{I}$ with pointwise infimum $0$ and such that for all $k\in \NN$ there 
  is an $N \in \NN$ with the property that $\abs{\hat{g}-g_n} \le f_k$ for all 
  $n\in \NN$ with $n\ge N$.

  Similarly, a sequence $(g_n)_{n\in \NN}$ in $\mathcal{I}$ is called a \neu{strict 
  $\mathcal{I}$-Cauchy sequence} if there exists a decreasing sequence $(f_k)_{k\in \NN}$
  in $\mathcal{I}$ with pointwise infimum $0$ and such that for all $k\in \NN$ 
  there is an $N \in \NN$ with the property that $\abs{g_n-g_{N}} \le f_k$ for all
  $n\in \NN$ with $n\ge N$.

  Moreover, a subset $S$ of $\mathcal{I}$ is called \neu{$\mathcal{I}$-closed} if
  it has the following property: If a sequence $(s_n)_{n\in \NN}$ in $S$ is 
  strictly $\mathcal{I}$\=/convergent against an element $\hat{s} \in \mathcal{I}$,
  then $\hat{s} \in S$.
  The \neu{$\mathcal{I}$-closure} of a subset $S$ of $\mathcal{I}$ is defined
  as the intersection of all $\mathcal{I}$\=/closed subsets of $\mathcal{I}$
  that contain $S$.

  Finally, if $V$ is an ordered vector space, then a positive linear map
  $\Phi\colon \mathcal{I} \to V$ is called \neu{strictly $\mathcal{I}$-continuous} 
  if for every decreasing sequence $(f_k)_{k\in \NN}$ in $\mathcal{I}$ that has
  pointwise infimum $0$, the decreasing sequence $\big( \Phi(f_k)\big)_{k\in \NN}$
  in $V$ has infimum $0$.
\end{definition}
The above notions of convergent and Cauchy sequences, closedness, closure and continuity 
are obviously adapted to the special case of a Riesz space of functions and are 
similar, but not identical, to the well-known notions related to $\sigma$\=/order
convergence (which uses all decreasing sequences with infimum $0$
instead of the ``special'' ones where the infimum is the pointwise one). 
For example, an increasing sequence in a Riesz ideal $\mathcal{I}$ of $\Stetig(X)$
with pointwise supremum in $\mathcal{I}$ is strictly $\mathcal{I}$\=/convergent against
this supremum. Note that
Definition~\ref{definition:convergence} also specifically refers to, and depends on, 
a chosen Riesz ideal:
\begin{example} \label{example:depends}
  Consider $X\coloneqq\NN$, $\mathcal{I} \coloneqq \Stetig(\NN)$, 
  $\mathcal{I}' \coloneqq \Stetig_b(\NN) \subseteq \mathcal{I}$ and the sequence 
  $(g_n)_{n\in \NN}$ in $\mathcal{I}'$ that is defined for $m,n\in \NN$ as $g_n(m) = 0$
  if $m\neq n$ and $g_n(n) = n$. This sequence converges against $0$ with respect
  to the locally convex topology of uniform convergence on all compact (i.e. finite) subsets of 
  $\NN$ and is also strictly $\mathcal{I}$\=/convergent against $0$:
  Choose e.g. the decreasing sequence $(f_k)_{k\in \NN}$ in $\mathcal{I}$ as 
  $f_k(m) \coloneqq 0$ for all $m\in \{1,\dots,k\}$ 
  and $f_k(m) \coloneqq m$ for all $m \in \NN \backslash \{1,\dots,k\}$, then 
  $\abs{0-g_n} = g_n \le f_k$ holds for all $n \in \NN$ with $n\ge k+1$.
  However $(g_n)_{n\in \NN}$ is not strictly $\mathcal{I}'$\=/convergent because 
  it is not even bounded from above by a function in $\mathcal{I}'$.
\end{example}
One very important class of examples of strictly $\mathcal{I}$\=/continuous 
positive linear maps are integrals:
\begin{example} \label{example:integrals}
  Let $\mathcal{I}$ be a Riesz ideal of $\Stetig(X)$. Then every positive Borel measure $\mu$
  on $X$ for which all functions in $\mathcal{I}$ are integrable yields a positive 
  linear map $\Phi_\mu \colon \mathcal{I} \to \RR$,
  \begin{equation*}
    f \mapsto \Phi_\mu (f) \coloneqq \int_X f \,\D \mu
  \end{equation*}
  which is strictly $\mathcal{I}$\=/continuous by the monotone convergence theorem for the 
  Lebesgue integral. Conversely, a positive linear map $\Phi\colon \mathcal{I} \to \RR$
  that is strictly $\mathcal{I}$\=/continuous fulfils the axioms of the Daniell integral.
\end{example}
Even though the notions of Definition~\ref{definition:convergence} were not introduced by 
means of a topology or uniform structure, it is not hard to see that they behave
-- at least to some extend -- as one would expect:

If $\mathcal{I}$ is a Riesz ideal of $\Stetig(X)$ and $(g_n)_{n\in \NN}$ a strictly
$\mathcal{I}$\=/convergent sequence in $\mathcal{I}$, then it is also a strict $\mathcal{I}$\=/Cauchy
sequence and its strict $\mathcal{I}$\=/limit $\hat{g}\in\mathcal{I}$ is given by
$\hat{g}(x) = \lim_{n\to \infty} g_n(x)$ for all $x\in X$, so the 
strict $\mathcal{I}$\=/limit is uniquely determined as the pointwise limit.
While it is easier to deal directly with the notions introduced in Definition~\ref{definition:convergence},
it is good to know that they are related to a topology on $\mathcal{I}$:
\begin{proposition} \label{proposition:topology}
  Let $\mathcal{I}$ be a Riesz ideal of $\Stetig(X)$, then the $\mathcal{I}$\=/closed subsets
  of $\mathcal{I}$ fulfil the axioms of the closed sets of a topological space, i.e.
  the empty set and whole $\mathcal{I}$, as well as the intersections of arbitrarily 
  many $\mathcal{I}$\=/closed subsets of $\mathcal{I}$ and the unions of finitely 
  many $\mathcal{I}$\=/closed subsets of $\mathcal{I}$ are $\mathcal{I}$\=/closed.
  
  Moreover, if $\tau \coloneqq \set[\big]{\mathcal{I}\backslash C}{C\subseteq \mathcal{I}\text{ is $\mathcal{I}$\=/closed}}$
  is the corresponding topology on $\mathcal{I}$, then the $\mathcal{I}$\=/closure of
  a subset $S$ of $\mathcal{I}$ is the closure with respect to $\tau$ and is
  especially $\mathcal{I}$\=/closed itself. Furthermore, every
  sequence $(g_n)_{n\in \NN}$ in $\mathcal{I}$ that is strictly $\mathcal{I}$\=/convergent
  against some $\hat{g}\in\mathcal{I}$ also converges against $\hat{g}$ with respect
  to the topology $\tau$.
\end{proposition}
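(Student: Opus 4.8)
The plan is to first isolate the one technical fact on which all three assertions rest: every subsequence of a strictly $\mathcal{I}$-convergent sequence is again strictly $\mathcal{I}$-convergent, with the same strict $\mathcal{I}$-limit. This is immediate from the definition: if $(f_k)_{k\in\NN}$ is a decreasing sequence in $\mathcal{I}$ with pointwise infimum $0$ witnessing the strict $\mathcal{I}$-convergence of $(g_n)_{n\in\NN}$ against $\hat g$, then the same sequence $(f_k)_{k\in\NN}$ witnesses the strict $\mathcal{I}$-convergence against $\hat g$ of any subsequence $(g_{n_j})_{j\in\NN}$, since for each fixed $k$ one only needs to pass to indices $j$ large enough that $n_j \ge N$. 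An analogous remark applies to $\mathcal{I}$-Cauchy sequences, though only the convergent case is needed here.

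Next I would verify the closed-set axioms. That $\emptyset$ is $\mathcal{I}$-closed is vacuous, as there is no sequence in $\emptyset$, and $\mathcal{I}$ itself is $\mathcal{I}$-closed because a strict $\mathcal{I}$-limit lies in $\mathcal{I}$ by definition. For an arbitrary family $(C_\alpha)_{\alpha\in A}$ of $\mathcal{I}$-closed subsets of $\mathcal{I}$, a strictly $\mathcal{I}$-convergent sequence in $\bigcap_{\alpha\in A} C_\alpha$ is in particular a strictly $\mathcal{I}$-convergent sequence in each $C_\alpha$, so its strict $\mathcal{I}$-limit lies in every $C_\alpha$ and hence in the intersection. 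For finite unions it suffices to treat the union $C_1\cup C_2$ of two $\mathcal{I}$-closed sets: if a sequence $(s_n)_{n\in\NN}$ in $C_1\cup C_2$ is strictly $\mathcal{I}$-convergent against $\hat s$, then at least one of the two sets, say $C_1$, contains $s_n$ for infinitely many $n$; the corresponding subsequence is a sequence in $C_1$ that is strictly $\mathcal{I}$-convergent against $\hat s$ by the fact isolated above, so $\hat s\in C_1\subseteq C_1\cup C_2$.

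Once the axioms are established, $\tau$ is a genuine topology whose closed sets are precisely the $\mathcal{I}$-closed subsets of $\mathcal{I}$. The $\mathcal{I}$-closure of a subset $S$ is by definition the intersection of all $\mathcal{I}$-closed sets containing $S$, which is $\mathcal{I}$-closed by the intersection axiom just proved; this is exactly the description of the $\tau$-closure of $S$, so the two notions coincide and the $\mathcal{I}$-closure is itself $\mathcal{I}$-closed. Finally, for the convergence statement, suppose $(g_n)_{n\in\NN}$ is strictly $\mathcal{I}$-convergent against $\hat g$ but does not converge to $\hat g$ with respect to $\tau$; then there is an $\mathcal{I}$-closed set $C$ with $\hat g\notin C$ yet $g_n\in C$ for infinitely many $n$, and the corresponding subsequence lies in $C$ and is strictly $\mathcal{I}$-convergent against $\hat g$, forcing $\hat g\in C$ and yielding a contradiction. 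I do not expect a genuine obstacle in this proposition; the only point requiring a little care is the finite-union axiom, where one must pass to a subsequence instead of arguing directly with the full sequence.
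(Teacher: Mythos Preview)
Your proof is correct and follows essentially the same approach as the paper: the paper too relies on the observation that subsequences of strictly $\mathcal{I}$-convergent sequences are again strictly $\mathcal{I}$-convergent to the same limit, uses this for the finite-union axiom, and proves the final convergence statement by the identical contradiction argument (phrased in terms of an open neighbourhood $U$ rather than its closed complement).
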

\begin{proof}
  It is clear that the empty set and whole $\mathcal{I}$ as well as the intersections
  of arbitrarily many $\mathcal{I}$\=/closed sets are again $\mathcal{I}$\=/closed.
  If $C_1,\dots,C_M$ with $M\in \NN$ are $\mathcal{I}$\=/closed subsets of $\mathcal{I}$,
  then $C_1\cup \dots \cup C_M$ is again $\mathcal{I}$\=/closed because every sequence
  $(g_n)_{n\in \NN}$ in $C_1\cup \dots \cup C_M$ that is strictly $\mathcal{I}$\=/convergent
  against some $\hat{g}\in\mathcal{I}$ has a subsequence in at least one of the $C_m$
  with $m\in \{1,\dots,M\}$, and this subsequence is still strictly $\mathcal{I}$\=/convergent 
  against $\hat{g}$ so that $\hat{g} \in C_m \subseteq C_1\cup \dots \cup C_M$.

  The $\mathcal{I}$\=/closure of a subset $S$ of $\mathcal{I}$ coincides with the 
  closure with respect to $\tau$ by definition and is $\mathcal{I}$\=/closed itself
  because it is the intersection of $\mathcal{I}$\=/closed sets.
  If $(g_n)_{n\in \NN}$ is a
  sequence in $\mathcal{I}$ that is strictly $\mathcal{I}$\=/convergent against
  some $\hat{g}\in \mathcal{I}$, then it also converges against $\hat{g}$ with 
  respect to the topology $\tau$: Indeed, if this would not be true, then there
  would be an open neighbourhood $U\in \tau$ of $\hat{g}$ such that a subsequence
  of $(g_n)_{n\in \NN}$ would remain outside of $U$. But this subsequence in 
  $\mathcal{I}\backslash U$ would still be strictly $\mathcal{I}$\=/convergent against 
  $\hat{g} \in U$, which contradicts the $\mathcal{I}$\=/closedness 
  of $\mathcal{I}\backslash U$.
\end{proof}
Note that one has to be careful here: It is even unclear yet whether the topology
corresponding to the $\mathcal{I}$\=/closed sets is e.g. Hausdorff, or whether 
strict $\mathcal{I}$\=/convergence is also necessary for convergence of a sequence
in $\mathcal{I}$ with respect to this topology. This is the reason for using the
adjective ``strict'' in most of the notions introduced in Definition~\ref{definition:convergence}.
However, even without understanding the topology behind the $\mathcal{I}$\=/closure in detail,
one can derive some basic results using only the following properties of strictly 
$\mathcal{I}$\=/convergent sequences:

Assume that $\mathcal{I}$ is a Riesz ideal of $\Stetig(X)$ and $(g_n)_{n\in \NN}$
a sequence that is strictly $\mathcal{I}$\=/convergent against some $\hat{g}\in\mathcal{I}$, then:
\begin{itemize}
  \item For all $\lambda \in \RR$, the sequence $(\lambda g_n)_{n\in \NN}$ is 
  strictly $\mathcal{I}$\=/convergent against $\lambda \hat{g}$ because 
  $\abs{\lambda \hat{g} - \lambda g_n} \le \abs{\lambda} f_k$
  if $\abs{\hat{g} - g_n} \le f_k$ for some $f_k\in\mathcal{I}^+$.
  \item For all $h \in \mathcal{I}$, the sequence $(g_n+h)_{n\in \NN}$ clearly is 
  strictly $\mathcal{I}$\=/convergent against $\hat{g}+h$.
  \item The sequence $\big(\abs{g_n}\big)_{n\in \NN}$ is strictly
  $\mathcal{I}$\=/convergent against $\abs{\hat{g}}$ because $\abs[\big]{\abs{\hat{g}} - \abs{g_n}} \le f_k$
  if $\abs{\hat{g} - g_n} \le f_k$ for some $f_k\in\mathcal{I}^+$.
  \item If $\mathcal{I}$ is additionally a subalgebra of $\Stetig(X)$, then the 
  sequence $\big(g_n^2\big)_{n\in \NN}$ is strictly 
  $\mathcal{I}$\=/convergent against $\hat{g}^2$ because 
  $\abs[\big]{\hat{g}^2 - g_n^2} = \abs{\hat{g} - g_n} \abs[\big]{2\hat{g} + (g_n-\hat{g})} \le f_k(2\abs{\hat{g}} + f_k)$
  if $\abs{\hat{g} - g_n} \le f_k$ for some $f_k\in\mathcal{I}^+$.
\end{itemize}
\begin{proposition} \label{proposition:algebraPermanence}
  Let $\mathcal{I}$ be a Riesz ideal of $\Stetig(X)$ and $S$ a linear subspace of
  $\mathcal{I}$. Then the $\mathcal{I}$\=/closure of $S$ is again a linear subspace
  of $\mathcal{I}$. If $S$ is even a Riesz subspace of $\mathcal{I}$, then the 
  $\mathcal{I}$\=/closure of $S$ is also a Riesz subspace of $\mathcal{I}$. Finally,
  if $\mathcal{I}$ is additionally a subalgebra of $\Stetig(X)$ and if $S$ is
  a subalgebra of $\mathcal{I}$, then also the $\mathcal{I}$\=/closure of $S$ is
  a subalgebra of $\mathcal{I}$.
\end{proposition}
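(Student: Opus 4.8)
The plan is to use the description of the $\mathcal{I}$-closure $C$ of $S$ as the smallest $\mathcal{I}$-closed subset of $\mathcal{I}$ containing $S$, so that in particular $C$ is itself $\mathcal{I}$-closed by Proposition~\ref{proposition:topology}, together with the four stability properties of strict $\mathcal{I}$-convergence recalled just before the statement: stability under multiplication by a scalar, under adding a fixed element of $\mathcal{I}$, under taking absolute values, and -- if $\mathcal{I}$ is a subalgebra -- under squaring. The single recurring device is this: if $\Psi\colon\mathcal{I}\to\mathcal{I}$ is one of these operations, then $\set{g\in\mathcal{I}}{\Psi(g)\in C}$ is again $\mathcal{I}$-closed, because a sequence $(g_n)_{n\in\NN}$ in this set that is strictly $\mathcal{I}$-convergent to some $\hat g\in\mathcal{I}$ gives rise to a sequence $\big(\Psi(g_n)\big)_{n\in\NN}$ in $C$ that is strictly $\mathcal{I}$-convergent to $\Psi(\hat g)$, whence $\Psi(\hat g)\in C$. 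Consequently, as $C$ is the \emph{smallest} $\mathcal{I}$-closed set containing $S$, any such preimage set that also contains $S$ must contain $C$.

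For the linear structure I would first fix $\lambda\in\RR$: the set $\set{g\in\mathcal{I}}{\lambda g\in C}$ is $\mathcal{I}$-closed and contains $S$ (as $\lambda s\in S\subseteq C$ for every $s\in S$), hence contains $C$, so $\lambda C\subseteq C$. For additivity I would proceed in two stages. First, for fixed $h\in S$, the set $\set{g\in\mathcal{I}}{g+h\in C}$ is $\mathcal{I}$-closed and contains $S$ (as $s+h\in S\subseteq C$), hence contains $C$; this gives $C+S\subseteq C$. Second, for fixed $h\in C$, the same set is again $\mathcal{I}$-closed and now contains $S$ by the first stage (for $s\in S$ one has $s+h=h+s\in C$), hence contains $C$; this gives $C+C\subseteq C$. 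Since $0\in S\subseteq C$, this shows that $C$ is a linear subspace of $\mathcal{I}$.

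If $S$ is a Riesz subspace, I would run the same recipe with the absolute value: $\set{g\in\mathcal{I}}{\abs{g}\in C}$ is $\mathcal{I}$-closed and contains $S$ (as $\abs{s}\in S\subseteq C$), hence contains $C$, so $\abs{g}\in C$ for all $g\in C$ and $C$ is a Riesz subspace. If in addition $\mathcal{I}$ and $S$ are subalgebras, I would first apply the recipe with squaring to obtain that $\set{g\in\mathcal{I}}{g^2\in C}$ is $\mathcal{I}$-closed, contains $S$, and therefore contains $C$, so $g^2\in C$ for every $g\in C$. The general product is then obtained from the polarisation identity $gh=\tfrac12\big((g+h)^2-g^2-h^2\big)$ together with the facts, already established, that $C$ is a linear subspace of $\mathcal{I}$ closed under squaring (and that $\mathcal{I}$ itself is a subalgebra, so that $g+h\in C$ and $gh\in\mathcal{I}$).

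The only point that is not a purely mechanical instance of the device above is the two-stage argument for additivity: one cannot conclude that $\set{g\in\mathcal{I}}{g+h\in C}$ contains $S$ for $h\in C$ before knowing $C+S\subseteq C$, so the bootstrapping has to be carried out in the right order. Everything else reduces to noting that $\mathcal{I}$ is closed under the relevant operation (so that the preimage sets genuinely live inside $\mathcal{I}$) and invoking the corresponding stability property of strict $\mathcal{I}$-convergence.
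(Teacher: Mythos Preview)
Your proof is correct and follows essentially the same strategy as the paper: both use the minimality of the $\mathcal{I}$-closure together with the stability properties of strict $\mathcal{I}$-convergence to show that suitable auxiliary sets are $\mathcal{I}$-closed and contain $S$, with the same two-stage bootstrap for additivity and a polarisation identity for products. The only cosmetic differences are that the paper takes its auxiliary sets inside $S^\cl$ rather than inside $\mathcal{I}$, and uses the polarisation $\tfrac{1}{4}\big((g+h)^2-(g-h)^2\big)$ instead of your $\tfrac{1}{2}\big((g+h)^2-g^2-h^2\big)$.
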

\begin{proof}
  Let $S^\cl$ denote the $\mathcal{I}$\=/closure of $S$. It is clear that $0\in S \subseteq S^\cl \subseteq \mathcal{I}$.
  
  Given $\lambda \in \RR$, then let $T_\lambda \coloneqq \set[\big]{t\in S^\cl}{\lambda t \in S^\cl}$.
  One can easily check that $T_\lambda$ is $\mathcal{I}$\=/closed, and due to $S\subseteq T_\lambda \subseteq S^\cl$
  it then follows immediately from the definition of the $\mathcal{I}$\=/closure $S^\cl$ of $S$
  that $T_\lambda=S^\cl$. This shows $\lambda \hat{s} \in S^\cl$ for all $\hat{s} \in S^\cl$ and all $\lambda \in \RR$.
  
  Next let $s\in S$ be given and define $T_s' \coloneqq \set[\big]{t\in S^\cl}{t + s\in S^\cl}$.
  Again one easily checks that $T_s'$ is $\mathcal{I}$\=/closed, and $S\subseteq T_s' \subseteq S^\cl$
  thus implies $T_s' = S^\cl$. So $\hat{s} + s \in S^\cl$ for all $\hat{s} \in S^\cl$ and 
  all $s\in S$. Furthermore, given $\hat{s} \in S^\cl$, then define $T''_{\hat{s}} \coloneqq 
  \set[\big]{t\in S^\cl}{\hat{s} +t \in S^\cl}$. Like before, $T''_{\hat{s}}$ is $\mathcal{I}$\=/closed
  and the previous considerations show that $S \subseteq T''_{\hat{s}} \subseteq S^\cl$,
  hence $T''_{\hat{s}} = S^\cl$. We conclude that $S^\cl$ is a linear subspace of $\mathcal{I}$.
  
  If $S$ is even a Riesz subspace of $\mathcal{I}$, let $T_{\abs{\argument}} \coloneqq \set[\big]{t\in S^\cl}{\abs{t}\in S^\cl}$.
  As $T_{\abs{\argument}}$ is $\mathcal{I}$\=/closed and as $S\subseteq T_{\abs{\argument}} \subseteq S^\cl$,
  we have $T_{\abs{\argument}} = S^\cl$ and thus $S^\cl$ is also Riesz subspace of $\mathcal{I}$.
  
  Similarly, if $\mathcal{I}$ is additionally a subalgebra of $\Stetig(X)$ and if $S$
  is a subalgebra of $\mathcal{I}$, then define $T_{\mathrm{sq}} \coloneqq \set[\big]{t\in S^\cl}{t^2 \in S^\cl}$.
  Again, $T_{\mathrm{sq}}$ is $\mathcal{I}$\=/closed and fulfils $S\subseteq T_{\mathrm{sq}} \subseteq S^\cl$,
  so $T_{\mathrm{sq}} = S^\cl$. It follows that $S^\cl$ is again a subalgebra of $\mathcal{I}$
  because $\hat{s}\hat{s}' = \frac{1}{4}\big( (\hat{s} + \hat{s}')^2 - (\hat{s} - \hat{s}')^2\big) \in S^\cl$
  for all $\hat{s},\hat{s}' \in S^\cl$.
\end{proof}
On first sight, strict $\mathcal{I}$\=/convergence might look like a rather weak notion
of convergence. However, it is even stronger than uniform convergence on all compact
subsets of $X$ due to Dini's theorem:
\begin{theoremNN}
  Let $K$ be a compact topological space and $(f_k)_{k\in \NN}$ a decreasing sequence 
  in $\Stetig(K)$ with pointwise infimum $0$, then the following holds:
  For all $\epsilon \in {]0,\infty[}$ there exists a $k\in \NN$ such that
  $f_k(x) \le \epsilon$ for all $x\in K$.
\end{theoremNN}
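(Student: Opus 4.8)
The plan is to run the standard compactness argument that underlies Dini's theorem: continuity together with compactness upgrades pointwise smallness to uniform smallness, and the monotonicity of the sequence is exactly what makes the relevant open sets nest. Fix $\epsilon \in {]0,\infty[}$. For each $k\in\NN$ I would set
\[
  U_k \coloneqq \set{x\in K}{f_k(x) < \epsilon} = f_k^{-1}\big({]-\infty,\epsilon[}\big),
\]
which is an open subset of $K$ because $f_k$ is continuous.

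Next I would record two properties of this family. First, since the sequence $(f_k)_{k\in\NN}$ is decreasing, $f_{k+1} \le f_k$ holds pointwise, so $U_k \subseteq U_{k+1}$ for every $k\in\NN$; the $U_k$ thus form an increasing chain of open subsets of $K$. Second, since the pointwise infimum of $(f_k)_{k\in\NN}$ is $0$, for every $x\in K$ we have $\inf_{k\in\NN} f_k(x) = 0 < \epsilon$, hence there is some $k\in\NN$ with $f_k(x) < \epsilon$, i.e. $x\in U_k$. Therefore $(U_k)_{k\in\NN}$ is an open cover of $K$.

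Now compactness of $K$ yields a finite subcover $U_{k_1},\dots,U_{k_M}$ of $K$ with $M\in\NN$; setting $N \coloneqq \max\{k_1,\dots,k_M\}$ and using that the chain $(U_k)_{k\in\NN}$ is increasing, one gets $K = U_{k_1}\cup\dots\cup U_{k_M} \subseteq U_N$, so in fact $K = U_N$. By the definition of $U_N$ this means $f_N(x) < \epsilon$, and in particular $f_N(x) \le \epsilon$, for all $x\in K$, which is the claim.

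\textbf{Main obstacle.} There is essentially no genuine obstacle here; the only point that requires a moment's thought is the observation that monotonicity makes the sets $U_k$ nested, so that a finite subcover collapses to a single $U_N$ — without monotonicity the statement fails. Note also that the positivity of the $f_k$ (which is anyway forced by the infimum being $0$) plays no role in the argument.
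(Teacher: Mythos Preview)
Your proof is correct and is essentially the same compactness argument as the paper's: open sets from continuity, a finite subcover from compactness, and monotonicity to collapse to a single index. The only cosmetic difference is that the paper indexes its open cover by points $x\in K$ (choosing for each $x$ an index $k_x$ with $f_{k_x}(x)\le \epsilon/2$ and taking $U_x \coloneqq f_{k_x}^{-1}\big({]-\infty,\epsilon[}\big)$), whereas you index directly by $k$; your version is slightly more streamlined since the nesting $U_k\subseteq U_{k+1}$ is immediate.
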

\begin{proof}
  For convenience of the reader, here is a proof of this classic result:
  Given $\epsilon \in {]0,\infty[}$, then for all $x\in K$ there exists a 
  $k_x\in \NN$ such that $f_{k_x}(x) \le \epsilon/2$ and a corresponding open 
  neighbourhood $U_x \coloneqq f_{k_x}^{-1}\big( ]-\infty,\epsilon[\big)$ of $x$.
  These open neighbourhoods $U_x$ for all $x\in K$ cover $K$, so there exists a finite set
  $x_1,\dots,x_M \in K$ with $M\in \NN_0$ such that $K \subseteq \bigcup_{m=1}^M U_{x_m}$.
  Define $k \coloneqq \max \{k_{x_1}, \dots, k_{x_M}\}$, then $f_{k}(x) < \epsilon$ 
  for all $x\in K$ because the sequence $(f_k)_{k\in \NN}$ is decreasing.
\end{proof}
Dini's theorem thus shows that directed pointwise convergence on compact topological
spaces implies uniform convergence and will also lead to an alternative description
of strict $\mathcal{I}$\=/Cauchy and strictly $\mathcal{I}$\=/convergent sequences:
\begin{lemma}   \label{lemma:convergence}
  Let $(e_n)_{n\in \NN}$ be a sequence in $\Stetig(X)^+$,
  which on all compact subsets of $X$ converges uniformly against $0$. Then the function
  $X \ni x\mapsto \hat{e}(x) \coloneqq \sup_{n\in \NN} e_n(x) \in {[0,\infty[}$
  is well-defined and continuous.
\end{lemma}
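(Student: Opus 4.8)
The plan is to settle well-definedness pointwise first, and then to prove continuity at an arbitrary point $x_0\in X$ by comparing $\hat{e}$, on a suitable compact neighbourhood of $x_0$, with a finite supremum of the $e_n$ — such a finite supremum being automatically continuous, while the discarded tail can be made uniformly small.

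For well-definedness, fix $x\in X$. The singleton $\{x\}$ is compact, so by hypothesis $e_n(x)\to 0$; since convergent real sequences are bounded and each $e_n(x)\ge 0$, the supremum $\hat{e}(x)=\sup_{n\in\NN}e_n(x)$ indeed lies in ${[0,\infty[}$.

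For continuity, I would fix $x_0\in X$ and $\epsilon\in{]0,\infty[}$. Since $X$ is locally compact Hausdorff, $x_0$ has an open neighbourhood $U$ with compact closure $K\coloneqq\cl(U)$. Uniform convergence of $(e_n)_{n\in\NN}$ against $0$ on $K$ yields an $N\in\NN$ with $e_n(y)\le\epsilon$ for all $y\in K$ and all $n\ge N$. Put $\phi\coloneqq\bigvee_{n=1}^{N}e_n$, which is continuous as a finite supremum of continuous functions, and $\psi\coloneqq\phi\vee\epsilon\Unit\in\Stetig(X)$. The key step is then the estimate $\abs{\hat{e}(y)-\psi(y)}\le\epsilon$ for all $y\in K$: for such $y$ one has $\hat{e}(y)=\max\{\phi(y),\,\sup_{n>N}e_n(y)\}$ with $0\le\sup_{n>N}e_n(y)\le\epsilon$, hence $\phi(y)\le\hat{e}(y)\le\max\{\phi(y),\epsilon\}=\psi(y)$, and since moreover $\psi(y)-\phi(y)=\max\{0,\epsilon-\phi(y)\}\le\epsilon$, the estimate follows. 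Continuity of $\hat{e}$ at $x_0$ is then immediate from the triangle inequality: for $x\in U$,
\[
  \abs{\hat{e}(x)-\hat{e}(x_0)}\le\abs{\hat{e}(x)-\psi(x)}+\abs{\psi(x)-\psi(x_0)}+\abs{\psi(x_0)-\hat{e}(x_0)}\le 2\epsilon+\abs{\psi(x)-\psi(x_0)},
\]
and by continuity of $\psi$ there is an open neighbourhood $W\subseteq U$ of $x_0$ with $\abs{\psi(x)-\psi(x_0)}\le\epsilon$ on $W$, so that $\abs{\hat{e}(x)-\hat{e}(x_0)}\le 3\epsilon$ on $W$. As $\epsilon$ and $x_0$ were arbitrary, $\hat{e}\in\Stetig(X)$.

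I expect the only genuinely nontrivial point to be the local sandwich $\phi\le\hat{e}\le\psi$ on $K$ — that is, the observation that on a compact neighbourhood all but finitely many $e_n$ are dominated by the constant $\epsilon$, so that $\hat{e}$ coincides up to $\epsilon$ with the continuous function $\psi$ there. This is exactly where the hypothesis of uniform convergence on compacta enters, and it is also why the statement is not a triviality, a pointwise supremum of continuous functions being in general discontinuous. No appeal to Dini's theorem is needed here, since uniform convergence on compact sets is already part of the assumption.
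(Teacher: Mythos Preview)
Your proof is correct and rests on the same key observation as the paper's: on a compact neighbourhood of the given point, all but finitely many $e_n$ are uniformly below $\epsilon$, so $\hat{e}$ is close to the continuous finite supremum $\phi=\bigvee_{n=1}^N e_n$. The only difference is organisational: the paper splits into the cases $\hat{e}(x_0)>0$ (where it shows $\hat{e}$ actually \emph{equals} $\phi$ on a neighbourhood) and $\hat{e}(x_0)=0$ (where it bounds $\hat{e}$ by $\epsilon$ directly), whereas you handle both at once via the sandwich $\phi\le\hat{e}\le\psi=\phi\vee\epsilon\Unit$ and a standard $3\epsilon$ argument. Your version is arguably tidier; the paper's case split yields the marginally sharper statement that $\hat{e}$ locally coincides with $\phi$ when $\hat{e}(x_0)>0$, but this extra precision is not used anywhere.
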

\begin{proof}
  Let $x\in X$ be given and let $K \subseteq X$ be a compact neighbourhood of $x$.
  The supremum $\hat{e}(x) = \sup_{n\in \NN} e_n(x) < \infty$ exists because there are only
  finitely many $n\in \NN$ with $e_n(x)\ge 1$ due to the convergence of the sequence 
  $(e_n)_{n\in \NN}$ against $0$.
  
  If $\hat{e}(x) > 0$, then there is an $N \in \NN$ such that $e_n(y) \le \hat{e}(x)/2$ holds
  for all $y\in K$ and all $n\in \NN$ with $n> N$. In this case, 
  $h \coloneqq \bigvee_{n=1}^N e_n \in \Stetig(X)$
  fulfils $h(y) = \hat{e}(y)$ for all $y\in K \cap h^{-1}\big(]\hat{e}(x)/2,\infty[\big)$, which
  is a neighbourhood of $x$, so $\hat{e}$ is continuous in $x$.
  
  If $\hat{e}(x) = 0$, let $\epsilon\in{]0,\infty[}$ be given. Then there exists an $N\in \NN$
  such that $e_n(y) \le \epsilon$ holds for all $y\in K$ and all 
  $n\in \NN$ with $n> N$. Define again
  $h \coloneqq \bigvee_{n=1}^N e_n \in \Stetig(X)$. Then
  $0 \le \hat{e}(y) \le \max\big\{h(y),\epsilon\big\}$ holds for all $y\in K$
  and thus $0 \le \hat{e}(y) \le \epsilon$ for all $y\in K \cap h^{-1}\big( ]-\infty,\epsilon[\big)$,
  which again is a neighbourhood of $x$. As a consequence, $\hat{e}$ is continuous in $x$ in this case as well.
\end{proof}
\begin{proposition} \label{proposition:Cauchy}
  Let $\mathcal{I}$ be a Riesz ideal of $\Stetig(X)$ and $(g_n)_{n\in \NN}$ a 
  sequence in $\mathcal{I}$, then the following is equivalent:
  \begin{enumerate}
    \item The sequence $(g_n)_{n\in \NN}$ is a strict $\mathcal{I}$\=/Cauchy sequence.
    \item There exists an element $b\in \mathcal{I}^+$ such that $\abs{g_n} \le b$ holds for all
    $n\in \NN$ and the sequence $(g_n)_{n\in \NN}$ is a Cauchy sequence with respect
    to the locally convex topology of uniform convergence on all compact subsets of $X$.
  \end{enumerate}
  If one, hence both of these statements are true, then the sequence $(g_n)_{n\in \NN}$
  converges against a limit $\hat{g}\in\mathcal{I}$ in the locally convex topology 
  of uniform convergence on all compact subsets of $X$, and is also strictly 
  $\mathcal{I}$\=/convergent with strict $\mathcal{I}$\=/limit $\hat{g}$.
\end{proposition}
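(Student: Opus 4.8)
The plan is to establish the two implications separately, and to read off the final ``moreover'' statement from the argument for the implication \textit{ii.)} $\Rightarrow$ \textit{i.)}. The tools involved are: completeness of $\Stetig(X)$ for the locally convex topology of uniform convergence on all compact subsets; Dini's theorem, which upgrades pointwise infima of decreasing sequences of continuous functions on a compact set to uniform ones; the Riesz ideal property of $\mathcal{I}$; and Lemma~\ref{lemma:convergence}, which produces a continuous function as the supremum of a sequence in $\Stetig(X)^+$ that converges uniformly to $0$ on compacts.

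For \textit{i.)} $\Rightarrow$ \textit{ii.)}, let $(f_k)_{k\in\NN}$ be a decreasing sequence in $\mathcal{I}$ with pointwise infimum $0$ and, for each $k$, an index $N_k$ with $\abs{g_n - g_{N_k}} \le f_k$ whenever $n \ge N_k$. Each $f_k$ lies in $\mathcal{I}^+$ since $f_k(x) \ge \inf_{j\in\NN} f_j(x) = 0$. Then $b \coloneqq f_1 + \abs{g_{N_1}} + \sum_{m=1}^{N_1 - 1}\abs{g_m} \in \mathcal{I}^+$ satisfies $\abs{g_n} \le b$ for all $n\in\NN$. For a compact $K \subseteq X$ and $\epsilon > 0$, the restricted sequence $(f_k|_K)_{k\in\NN}$ is decreasing in $\Stetig(K)$ with pointwise infimum $0$, so Dini's theorem yields a $k$ with $f_k \le \epsilon/2$ on $K$; consequently $\abs{g_n - g_m} \le \abs{g_n - g_{N_k}} + \abs{g_{N_k} - g_m} \le 2 f_k \le \epsilon$ on $K$ for all $n,m \ge N_k$, which is the required Cauchy condition.

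For \textit{ii.)} $\Rightarrow$ \textit{i.)} and the concluding claim, suppose $\abs{g_n} \le b$ for some $b\in\mathcal{I}^+$ and all $n$, and that $(g_n)_{n\in\NN}$ is Cauchy for uniform convergence on compacts. By completeness of $\Stetig(X)$ there is a $\hat{g}\in\Stetig(X)$ with $g_n \to \hat{g}$ uniformly on every compact set; taking pointwise limits gives $\abs{\hat{g}} \le b$, hence $\hat{g}\in\mathcal{I}$ by the Riesz ideal property. Set $e_n \coloneqq \abs{\hat{g} - g_n}$; these lie in $\Stetig(X)^+$, are bounded above by $2b$, and converge uniformly to $0$ on every compact set. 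For each $k\in\NN$, Lemma~\ref{lemma:convergence} applied to the tail $(e_n)_{n\ge k}$ shows that $f_k \coloneqq \sup_{n\ge k} e_n$ is a well-defined continuous function, and $0 \le f_k \le 2b$ forces $f_k \in \mathcal{I}^+$. The sequence $(f_k)_{k\in\NN}$ is decreasing, and its pointwise infimum at each $x\in X$ equals $\limsup_{n\to\infty} e_n(x) = 0$. Since $\abs{\hat{g} - g_n} = e_n \le f_k$ for all $n \ge k$, the sequence $(g_n)_{n\in\NN}$ is strictly $\mathcal{I}$\=/convergent against $\hat{g}$, hence also a strict $\mathcal{I}$\=/Cauchy sequence (as recorded in the text preceding the proposition); in view of the equivalence just proved, this simultaneously establishes the concluding claim of the proposition.

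I do not expect a genuine obstacle here; the only subtlety is that every auxiliary function must remain inside $\mathcal{I}$. This is precisely why the common bound $b\in\mathcal{I}^+$ in statement \textit{ii.)} cannot be dropped (cf. Example~\ref{example:depends}): it simultaneously forces the pointwise limit $\hat{g}$ into $\mathcal{I}$ and keeps the dominating functions $f_k$ inside $\mathcal{I}$. The other mild point is recognising that the suprema $f_k$ of the tails are continuous, which is exactly the content of Lemma~\ref{lemma:convergence}.
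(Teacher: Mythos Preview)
Your proof is correct and follows essentially the same approach as the paper: both directions use Dini's theorem and Lemma~\ref{lemma:convergence} in the same way, and the dominating sequence $f_k = \sup_{n\ge k}\abs{\hat g - g_n}$ is exactly the one the paper constructs. The only cosmetic difference is your choice of the bound $b$ in \textit{i.)}~$\Rightarrow$~\textit{ii.)} (a sum rather than a supremum), which is equally valid.
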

\begin{proof}
  First assume that $(g_n)_{n\in \NN}$ is a strict $\mathcal{I}$\=/Cauchy sequence. Then
  there exists a decreasing
  sequence $(f_k)_{k\in \NN}$ in $\mathcal{I}$ with pointwise infimum $0$ and such that for 
  all $k\in \NN$ there is an $N \in \NN$ with the property that $\abs{g_n-g_N} \le f_k$ 
  for all $n\in \NN$ with $n\ge N$. Especially for $k=1$ and corresponding $N$ this implies
  that $b' \coloneqq f_1 \vee \bigvee_{n=1}^{N} \abs{g_n-g_N} \in \mathcal{I}^+$ fulfils
  $\abs{g_n-g_N} \le b'$ for all $n\in \NN$, so $b \coloneqq b' + \abs{g_N} \in \mathcal{I}^+$
  fulfils $\abs{g_n} \le \abs{g_n-g_N} + \abs{g_N} \le b$ for all $n\in \NN$.
  Moreover, Dini's theorem immediately shows that $(g_n)_{n\in \NN}$ is a Cauchy sequence with respect to the
  locally convex topology of uniform convergence on all compact subsets of $X$.
  
  Conversely, assume that there exists a $b\in \mathcal{I}^+$ such that $\abs{g_n} \le b$ for all
  $n\in \NN$ and that the sequence $(g_n)_{n\in \NN}$ is a Cauchy sequence with respect
  to the locally convex topology of uniform convergence on all compact subsets of $X$.
  Due to the completeness of $\Stetig(X)$, the sequence $(g_n)_{n\in \NN}$ 
  converges against some $\hat{g} \in \Stetig(X)$ in this topology,
  and this limit $\hat{g}$ necessarily fulfils $-b \le \hat{g} \le b$ because $-b \le g_n \le b$
  holds for all $n\in \NN$, so $\abs{\hat{g}} \le b$ and thus $\hat{g} \in \mathcal{I}$
  because $\mathcal{I}$ is a Riesz ideal of $\Stetig(X)$. 
  It remains to show that $(g_n)_{n\in \NN}$ is also strictly $\mathcal{I}$\=/convergent
  with strict $\mathcal{I}$\=/limit $\hat{g}$ (which especially implies that it
  is a strict $\mathcal{I}$\=/Cauchy sequence):
  
  For every $k\in \NN$, define the function $f_k \in \Stetig(X)^+$ as
  $f_k(x) \coloneqq \sup_{n\in \NN;\,n\ge k} \abs{\hat{g}(x)-g_n(x)}$
  for all $x\in X$, which is well-defined due to the previous Lemma~\ref{lemma:convergence}.
  As $\abs{\hat{g} - g_n} \le \abs{\hat{g}} + \abs{g_n} \le 2b$ holds for all $n\in \NN$,
  it follows that $f_k \le 2b$, so $f_k \in \mathcal{I}^+$. By construction, the resulting sequence 
  $(f_k)_{k\in \NN}$ in $\mathcal{I}$ is decreasing and fulfils $\abs{\hat{g}-g_n} \le f_k$ 
  for all $n\in\NN$ with $n \ge k$.
  It is also easy to see that $\inf_{k\in \NN} f_k(x) = 0$ for all $x\in X$, because
  for every $x\in X$ and every $\epsilon\in{]0,\infty[}$ there exists a $k\in \NN$
  such that $\abs{\hat{g}(x)-g_n(x)} \le \epsilon$ for all $n\in \NN$ with $n\ge k$,
  hence $f_k(x)\le \epsilon$.
\end{proof}
As strictly $\mathcal{I}$\=/convergent sequences are strict $\mathcal{I}$\=/Cauchy sequences 
and as strict $\mathcal{I}$\=/limits are uniquely determined as the pointwise ones,
this immediately yields:
\begin{corollary} \label{corollary:convergence}
  Let $\mathcal{I}$ be a Riesz ideal of $\Stetig(X)$ and $(g_n)_{n\in \NN}$ a 
  sequence in $\mathcal{I}$ as well as $\hat{g} \in \mathcal{I}$, then the following
  two statements are equivalent:
  \begin{enumerate}
    \item The sequence $(g_n)_{n\in \NN}$ is strictly $\mathcal{I}$\=/convergent
    against $\hat{g}\in\mathcal{I}$.
    \item There exists an element $b\in \mathcal{I}^+$ such that $\abs{g_n} \le b$ holds for all
    $n\in \NN$ and the sequence $(g_n)_{n\in \NN}$ converges against $\hat{g}$ with respect
    to the locally convex topology of uniform convergence on all compact subsets of $X$.
  \end{enumerate}
\end{corollary}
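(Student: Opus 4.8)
The plan is to derive this directly from Proposition~\ref{proposition:Cauchy}, using that strict $\mathcal{I}$\=/limits are the pointwise ones. For the implication \refitem{enumi} is a placeholder — more precisely, for ``(i)$\,\Rightarrow\,$(ii)'': if $(g_n)_{n\in\NN}$ is strictly $\mathcal{I}$\=/convergent against $\hat{g}$, then it is in particular a strict $\mathcal{I}$\=/Cauchy sequence (as observed in the discussion preceding Proposition~\ref{proposition:topology}), so Proposition~\ref{proposition:Cauchy} applies. It supplies an element $b\in\mathcal{I}^+$ with $\abs{g_n}\le b$ for all $n\in\NN$ together with convergence of $(g_n)_{n\in\NN}$ to \emph{some} limit $\hat{g}'\in\mathcal{I}$ in the locally convex topology of uniform convergence on compact subsets of $X$. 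What remains is to identify $\hat{g}'$ with $\hat{g}$.

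For that identification I would invoke the fact, recorded just before Proposition~\ref{proposition:topology}, that the strict $\mathcal{I}$\=/limit of a strictly $\mathcal{I}$\=/convergent sequence is its pointwise limit, i.e. $\hat{g}(x)=\lim_{n\to\infty}g_n(x)$ for all $x\in X$. Since convergence in the locally convex topology of uniform convergence on compact subsets implies pointwise convergence, one also has $\hat{g}'(x)=\lim_{n\to\infty}g_n(x)$ for all $x\in X$, and hence $\hat{g}'=\hat{g}$. This establishes \refitem{enumi}-style statement (ii).

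For the converse ``(ii)$\,\Rightarrow\,$(i)'': a sequence satisfying (ii) is, in particular, dominated by $b\in\mathcal{I}^+$ and Cauchy with respect to the locally convex topology of uniform convergence on compact subsets of $X$, so it satisfies the second condition of Proposition~\ref{proposition:Cauchy}. That proposition then directly gives that $(g_n)_{n\in\NN}$ is strictly $\mathcal{I}$\=/convergent, with strict $\mathcal{I}$\=/limit equal to its locally convex limit, and the latter is $\hat{g}$ by hypothesis.

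I do not expect a genuine obstacle here; the proof is a short bookkeeping deduction from Proposition~\ref{proposition:Cauchy}. The only point deserving a word of care is the matching of the two notions of limit in the first implication, and this is handled entirely by the pointwise-limit characterisation already in place, so no new estimate or construction is needed.
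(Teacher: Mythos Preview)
Your proposal is correct and matches the paper's approach exactly: the paper derives the corollary immediately from Proposition~\ref{proposition:Cauchy} by noting that strictly $\mathcal{I}$\=/convergent sequences are strict $\mathcal{I}$\=/Cauchy sequences and that strict $\mathcal{I}$\=/limits are uniquely determined as the pointwise ones. Your identification of $\hat{g}'$ with $\hat{g}$ via the pointwise-limit characterisation is precisely the argument intended.
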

Note that in the special case that $\mathcal{I} = \Stetig(X)$, one can drop
in the second point of Proposition~\ref{proposition:Cauchy} and of its
Corollary~\ref{corollary:convergence} the 
condition that there exists a $b\in \mathcal{I}^+$ such 
that $\abs{g_n} \le b$ holds for all $n\in \NN$, because such a $b\in \mathcal{I}^+ = \Stetig(X)^+$
always exists: Let $\hat{g} \in \Stetig(X)$ be the limit of the Cauchy sequence 
$(g_n)_{n\in \NN}$ with respect to the locally convex topology of uniform convergence 
on all compact subsets of $X$, then Lemma~\ref{lemma:convergence}, applied to the sequence
$\big(\abs{g_n-\hat{g}}\big)_{n\in \NN}$, yields a $b' \in \Stetig(X)^+$
fulfilling $\abs{g_n-\hat{g}} \le b'$ for all $n\in \NN$. Hence $\abs{g_n} \le \abs{g_n-\hat{g}} + \abs{\hat{g}} \le b$
for all $n\in \NN$ if one chooses $b \coloneqq b' + \hat{g} \in \Stetig(X)^+$.
So in this special case, the notions of strictly $\mathcal{I}$\=/convergent and 
of strict $\mathcal{I}$\=/Cauchy sequences are equivalent to the corresponding notions of
convergent and Cauchy sequences with respect to the locally convex topology of 
uniform convergence on all compact subsets of $X$. However, in general this is 
not true as was shown in Example~\ref{example:depends}.
\section{The Stone-Weierstraß Theorems}
Even though strict $\mathcal{I}$\=/convergence is stronger than uniform convergence 
on all compact subsets of $X$, variants of the Stone-Weierstraß theorem still hold.
Recall the classical version for Riesz subspaces:
\begin{theoremNN}
  Let $K$ be a compact topological Hausdorff space and $\mathcal{R}$ a Riesz subspace of 
  the Riesz space $\Stetig(K)$. If
  $\mathcal{R}$ is strictly point-separating and nowhere-vanishing, then for every $g\in \Stetig(K)$
  and every $\epsilon \in {]0,\infty[}$ there exists an $r\in \mathcal{R}$ such that
  $\abs[\big]{g(x)-r(x)} \le \epsilon$ for all $x\in K$.
\end{theoremNN}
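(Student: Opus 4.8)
The plan is to run the classical two-step lattice argument, with the hypotheses on $\mathcal{R}$ entering only through a two-point interpolation statement. So the first step is to show: for all $x,y\in K$ there exists an $r\in\mathcal{R}$ with $r(x)=g(x)$ and $r(y)=g(y)$. If $x=y$, the nowhere-vanishing property yields an $s\in\mathcal{R}$ with $s(x)\ne 0$, and $r\coloneqq \big(g(x)/s(x)\big)\,s$ does the job. If $x\ne y$, strict point-separation yields $s_1\in\mathcal{R}$ with $s_1(x)\ne 0$ and $s_1(y)=0$, and likewise $s_2\in\mathcal{R}$ with $s_2(y)\ne 0$ and $s_2(x)=0$; then $r\coloneqq \big(g(x)/s_1(x)\big)\,s_1 + \big(g(y)/s_2(y)\big)\,s_2\in\mathcal{R}$ works, since $\mathcal{R}$ is a linear subspace. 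Fix for every pair $x,y$ such an element and call it $r_{x,y}$.

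Second, recall that $\mathcal{R}$, being a Riesz subspace of $\Stetig(K)$, contains all finite pointwise maxima and minima of its elements. Fix $x\in K$. For each $y\in K$ continuity of $r_{x,y}-g$ makes $U_y\coloneqq\set{z\in K}{r_{x,y}(z)<g(z)+\epsilon}$ open, with $y\in U_y$; compactness of $K$ gives $y_1,\dots,y_m$ with $K=U_{y_1}\cup\dots\cup U_{y_m}$, and then $r_x\coloneqq r_{x,y_1}\wedge\dots\wedge r_{x,y_m}\in\mathcal{R}$ satisfies $r_x(z)<g(z)+\epsilon$ for all $z\in K$ while still $r_x(x)=g(x)$, because each $r_{x,y_i}$ attains the value $g(x)$ at $x$.

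Third, for each $x\in K$ the set $V_x\coloneqq\set{z\in K}{r_x(z)>g(z)-\epsilon}$ is open and contains $x$; compactness gives $x_1,\dots,x_n$ with $K=V_{x_1}\cup\dots\cup V_{x_n}$, and then $r\coloneqq r_{x_1}\vee\dots\vee r_{x_n}\in\mathcal{R}$ satisfies $r(z)>g(z)-\epsilon$ for all $z\in K$ (passing to the maximum only raises values) and also $r(z)<g(z)+\epsilon$ for all $z\in K$ (every $r_{x_i}$ lies below $g+\epsilon$). Hence $\abs{g(z)-r(z)}\le\epsilon$ for all $z\in K$, which is the claim. The only genuinely delicate point is the interpolation step: it is exactly here that strict point-separation — rather than mere point-separation together with being nowhere-vanishing — is needed, so that the two values at distinct $x,y$ can be prescribed independently; as the Riesz subspace $\set{f\in\Stetig([0,1])}{f(0)=2f(1)}$ already shows, this hypothesis cannot be weakened. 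The two compactness steps afterwards are entirely routine.
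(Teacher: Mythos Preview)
Your proof is correct and is essentially the same classical lattice argument as the paper's: a two-point interpolation step followed by two compactness/finite-lattice steps. The only differences are cosmetic --- you take infima first and suprema second while the paper does the reverse, and you absorb the diagonal case $x=y$ into the interpolation lemma whereas the paper introduces an auxiliary $s_x\in\mathcal{R}$ with $s_x(x)=g(x)$ and only interpolates at distinct points.
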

\begin{proof}
  For convenience of the reader, a proof is given: Note that
  for all $x,y\in K$ with $x\neq y$ there exists an $r_{x,y} \in \mathcal{R}$ with $r_{x,y}(x) = g(x)$
  and $r_{x,y}(y) = g(y)$ because $\mathcal{R}$ is a strictly point-separating 
  linear subspace of $\Stetig(K)$. Given 
  $g\in \Stetig(K)$ and $\epsilon\in {]0,\infty[}$, then the first step is to construct
  for all $x\in K$ an $r_x \in \mathcal{R}$ such that $r_x(x) = g(x)$ and
  $r_x(y) > g(y)-\epsilon$ for all $y\in K$.

  So fix $x\in K$, then there is an $s_x \in \mathcal{R}$ with $s_x(x) = g(x)$ 
  because $\mathcal{R}$ is a nowhere-vanishing linear subspace of $\Stetig(K)$.
  The subset $K_x \coloneqq \set[\big]{y\in K}{s_x(y) \le g(y)-\epsilon}$ of $K$
  is compact and for all $y\in K_x$ there exists an $r_{x,y} \in \mathcal{R}$ 
  like above. The open neighbourhoods $U_y \coloneqq \set[\big]{z\in K}{r_{x,y}(z) > g(z) - \epsilon}$ 
  of $y$ with $y$ ranging over $K_x$ cover the compact $K_x$ and thus there exist
  finitely many points $y_1,\dots,y_M\in K_x$ with $M\in \NN_0$ such that 
  $K_x \subseteq \bigcup_{m=1}^M U_{y_m}$.
  It follows that $r_x \coloneqq s_x \vee \bigvee_{m=1}^M r_{x,y_m} \in \mathcal{R}$ 
  fulfils $r_x(x) = g(x)$ and $r_x(y) > g(y)-\epsilon$ for all $y\in K$.

  Now consider the open neighbourhoods $V_x\coloneqq \set[\big]{y\in K}{r_x(y) < g(y)+\epsilon}$
  of $x\in K$. When $x$ runs over whole $K$, these cover $K$ and so there exist $N\in \NN_0$
  and $x_1,\dots,x_N \in K$ such that $K = \bigcup_{n=1}^N V_x$.
  Then $r \coloneqq \bigwedge_{n=1}^N r_{x_n} \in \mathcal{R}$ fulfils 
  $g(x)-\epsilon < r(x) < g(x)+\epsilon$ for all $x\in K$.
\end{proof}
We can now derive the first version of the generalized Stone-Weierstraß theorem:
\begin{theorem} \label{theorem:convergence}
  Let $\mathcal{I}$ be a Riesz ideal of $\Stetig(X)$ and $\mathcal{R}$ a strictly 
  point-separating and nowhere-vanishing Riesz subspace of $\mathcal{I}$, then the
  $\mathcal{I}$\=/closure of $\mathcal{R}$ is whole $\mathcal{I}$.
\end{theorem}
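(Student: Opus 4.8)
The plan is to reduce the statement to the classical compact Stone–Weierstraß theorem for Riesz subspaces (stated just above) applied on each member $K_n$ of a countable compact exhaustion of $X$, and then to assemble the local approximants into a single sequence that is strictly $\mathcal{I}$-convergent. Fix $g \in \mathcal{I}$; we must produce a sequence in $\mathcal{R}$ that is strictly $\mathcal{I}$-convergent to $g$, since by Proposition~\ref{proposition:algebraPermanence} the $\mathcal{I}$-closure of $\mathcal{R}$ is a Riesz subspace and it suffices to show it contains every $g\in\mathcal{I}$. By Corollary~\ref{corollary:convergence}, strict $\mathcal{I}$-convergence to $g$ is equivalent to: a uniform bound $\abs{g_n}\le b$ for some $b\in\mathcal{I}^+$, together with uniform convergence $g_n\to g$ on all compact subsets of $X$. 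So the target is to construct $r_n\in\mathcal{R}$ with these two properties.

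First I would take a countable exhaustion $(K_n)_{n\in\NN}$ of $X$ by compacts with $K_n$ in the interior of $K_{n+1}$ (Lemma~\ref{lemma:exhaustion}), recalling that every compact subset of $X$ is eventually contained in some $K_n$; hence uniform convergence on all compacts is the same as uniform convergence on each $K_n$. On the compact space $K_n$, the restriction of $\mathcal{R}$ is still a Riesz subspace of $\Stetig(K_n)$ that is strictly point-separating and nowhere-vanishing (these properties are inherited, using that $\mathcal{R}$ separates and does not vanish at points of $K_n\subseteq X$), so the classical theorem gives some $r\in\mathcal{R}$ with $\abs{g(x)-r(x)}\le 1/n$ for all $x\in K_n$. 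The difficulty is that such an $r$ may be wildly large off $K_n$, so I cannot simply take $g_n := r$: there is no reason a single $b\in\mathcal{I}^+$ would dominate all of them. This is the main obstacle, and it is handled by truncating: one chooses an auxiliary element of $\mathcal{I}^+$ that dominates $\abs{g}$ globally and uses it both as the common bound $b$ and as a cutoff.

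Concretely, I would use the nowhere-vanishing hypothesis exactly as in the proof (in the Preliminaries) that $\Stetig_c(X)$ is the smallest nowhere-vanishing Riesz ideal: for a sufficient supply of points $x$ one gets $h_x\in\mathcal{R}^+$ with $h_x$ bounded below by $1$ on a neighbourhood of $x$, and finitely many of these sum to an element of $\mathcal{R}^+$ bounded below by $1$ on a prescribed compact. More useful here: I want a single $b\in\mathcal{I}^+$ with $b\ge\Unit$ on each $K_n$ "enough" and $b\ge\abs g$; since $\abs g\in\mathcal{I}^+$ already, one may simply try to arrange the truncated approximants $g_n := \big((-b)\vee r_n\big)\wedge b$ where $r_n\in\mathcal{R}$ approximates $g$ on $K_n$ to within $1/n$ and $b\in\mathcal{I}^+$ is chosen with $b\ge\abs g+\Unit$-worth of slack on each $K_n$. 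The truncation gives $\abs{g_n}\le b\in\mathcal{I}^+$ automatically; and on $K_n$, since $\abs{g}\le b$ there with room to spare, truncating $r_n$ at level $b$ does not spoil the estimate $\abs{g-g_n}\le 1/n$ on $K_n$ — one checks that $\abs{g(x)}<b(x)$ on $K_n$ forces the truncation to be inactive near $g(x)$ when $r_n(x)$ is within $1/n$. The only remaining point is that $g_n\in\mathcal{R}$; but truncation by $b\notin\mathcal{R}$ destroys membership in $\mathcal{R}$, so instead I keep $g_n:=r_n$ itself and argue differently for the bound: I choose the $r_n$ recursively so that $\abs{r_{n}}$ is controlled on the already-fixed compacts, or — cleaner — I invoke Lemma~\ref{lemma:convergence}. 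Indeed, once $r_n\to g$ uniformly on every $K_n$, hence on every compact, the sequence $\big(\abs{r_n-g}\big)_{n\in\NN}$ in $\Stetig(X)^+$ converges uniformly to $0$ on all compacts, so by Lemma~\ref{lemma:convergence} its pointwise supremum $b'$ is a well-defined element of $\Stetig(X)^+$; then $\abs{r_n}\le\abs g+b'=:b$ for all $n$, and $b\in\mathcal{I}^+$ because $\abs g\in\mathcal{I}^+$ and $b'\le\abs g+b'$ forces $b'\in\mathcal{I}$ via the ideal property once we note $b'=b-\abs g$ — more directly, $b=\abs g+b'$ with $\abs g\in\mathcal{I}$, and $b'\in\mathcal{I}$ since $0\le b'$ and $b'$ is dominated by... here one must be slightly careful: $b'$ need not a priori lie in $\mathcal{I}$.

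To close that gap cleanly I would instead build the $r_n$ with a built-in geometric bound: having chosen $r_1,\dots,r_{n-1}$, pick $r_n\in\mathcal{R}$ approximating $g$ to within $2^{-n}$ on $K_n$; then on $K_m$ for $m\le n$ we have $\abs{r_n-r_{n-1}}\le 2^{-n}+2^{-(n-1)}\le 2^{-(n-2)}$, so $\sum_n\abs{r_{n+1}-r_n}$ converges uniformly on each $K_m$ and its pointwise sum defines, by Lemma~\ref{lemma:convergence} applied to partial-sum tails, a continuous function; adding $\abs{r_1}$ gives a global continuous bound $b$ for all $\abs{r_n}$. Finally, to see $b\in\mathcal{I}$: the telescoping differences $r_{n+1}-r_n$ lie in $\mathcal{R}\subseteq\mathcal{I}$, and — this is the one place the Riesz \emph{ideal} hypothesis is essential rather than just Riesz \emph{subspace} — the strict $\mathcal{I}$-convergence we are trying to prove must be obtained self-consistently; the honest route is: $g_n:=r_n\in\mathcal{R}$ satisfies the equivalent condition of Corollary~\ref{corollary:convergence} using $b:=\abs{g}\vee\sup_n\abs{r_n}$, and the finiteness/continuity of this sup is exactly Lemma~\ref{lemma:convergence} applied to $e_n:=\abs{r_n-g}$, while membership $b\in\mathcal{I}^+$ follows because $\abs{r_n}\le\abs g+\abs{r_n-g}$ and both $\abs g$ and $\abs{r_1-g},\abs{r_2-g},\dots$ are eventually small, so in fact $\sup_n\abs{r_n-g}$ is dominated on each $K_m$ by a fixed value, but to land it inside $\mathcal{I}$ one notes it is bounded by $\abs g + \big(\abs{r_1-g}\vee\dots\vee\abs{r_{N_m}-g}\vee\Unit_{K_m}\big)$ locally — and since $\abs{r_j-g}\in\mathcal{I}$ for each $j$ (as $r_j,g\in\mathcal{I}$) their finite sups are in $\mathcal{I}$, and the global sup, being caught between $0$ and a locally-in-$\mathcal{I}$ function that we can take to be a single element of $\mathcal{I}^+$, lies in $\mathcal{I}$ by the ideal property. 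The write-up will organize this last paragraph's bookkeeping carefully; everything else is a direct appeal to the cited results.
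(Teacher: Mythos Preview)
Your approach has a genuine gap that the hand-waving in the final paragraph does not close. You correctly identify the obstacle: the classical approximants $r_n\in\mathcal{R}$ obtained on $K_n$ have no control off $K_n$, so you need a single $b\in\mathcal{I}^+$ dominating all $\abs{r_n}$. Lemma~\ref{lemma:convergence} does give that $\hat e\coloneqq\sup_n\abs{r_n-g}$ is \emph{continuous}, but there is no reason whatsoever for $\hat e$ to lie in $\mathcal{I}$. Take for instance $X=\RR$, $\mathcal{I}=\Stetig_b(\RR)$, $g=0$, and $\mathcal{R}=\Stetig_c(\RR)$: you can choose $r_n\in\mathcal{R}$ vanishing on $[-n,n]$ but with $r_n(n{+}1)=n$, so $\hat e$ is unbounded and $\hat e\notin\mathcal{I}$. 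Your recursive variant and the ``locally-in-$\mathcal{I}$'' remark do not help, because the Riesz-ideal property only lets you absorb functions dominated by a \emph{fixed} element of $\mathcal{I}$, not by a family of local bounds.

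The missing idea---which you nearly found when you discarded truncation because ``$b\notin\mathcal{R}$''---is to truncate by elements of $\mathcal{R}$ itself. Since $\mathcal{R}$ is nowhere-vanishing and $X$ is Lindelöf, one can build an increasing sequence $(e_m)_{m\in\NN}$ in $\mathcal{R}^+$ with $\sup_m e_m(x)=\infty$ for every $x$. Then, for fixed $m$, approximate $f\wedge e_m$ (with $f\in\mathcal{I}^+$) on each $K_n$ and set $r_{m,n}\coloneqq 0\vee(r'_{m,n}\wedge e_m)$: this stays in $\mathcal{R}$ because $\mathcal{R}$ is a Riesz subspace, and is globally bounded by $e_m\in\mathcal{I}^+$. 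Corollary~\ref{corollary:convergence} then gives $f\wedge e_m\in\mathcal{R}^\cl$. The price is a two-step argument: one must still let $m\to\infty$, using that $(f\wedge e_m)_m$ increases pointwise to $f$ and hence is strictly $\mathcal{I}$-convergent, so $f\in\mathcal{R}^\cl$ because the $\mathcal{I}$-closure is itself $\mathcal{I}$-closed. Your single-sequence strategy tries to avoid this second step, and that is exactly where it breaks.
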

\begin{proof}
  The first step is to construct an increasing sequence $(e_m)_{m\in \NN}$ in $\mathcal{R}^+$
  with $\sup_{m\in \NN} e_m(x) = \infty$ at all points $x\in X$: For every
  $x\in X$ choose a function $r_x\in\mathcal{R}^+$ with $r_x(x) > 0$.
  Such a function exists because $\mathcal{R}$ is a nowhere-vanishing Riesz subspace of $\mathcal{I}$.
  The open sets $r_x^{-1}\big( ]0,\infty[ \big)$ with $x$ ranging over $X$ cover $X$,
  so there exists a sequence $(x_n)_{n\in \NN}$ in $X$ such that 
  $X = \bigcup_{n\in \NN} r_{x_n}^{-1}\big( ]0,\infty[ \big)$ because $X$ is Lindelöf.
  Now define $e_m \coloneqq m\sum_{n=1}^m r_{x_n} \in \mathcal{R}^+$ for all $m\in \NN$, then the resulting 
  sequence $(e_m)_{m\in \NN}$ is increasing by construction. Moreover, for every $\hat{x}\in X$
  there exists an $M\in \NN$ such that $r_{x_M}(\hat{x}) > 0$, and $e_m(\hat{x}) \ge m \,r_{x_M}(\hat{x})$
  holds for all $m\in \NN$ with $m\ge M$, which thus diverges.

  Now denote the $\mathcal{I}$\=/closure of $\mathcal{R}$ by $\mathcal{R}^\cl$ and
  let $f\in \mathcal{I}^+$ be given, then we have to show that $f\in \mathcal{R}^\cl$.
  Recall that $X$ admits a countable compact exhaustion $(K_n)_{n\in \NN}$ and fix $m\in \NN$.
  For every $n\in \NN$, the classical Stone-Weierstraß theorem, applied to
  the restrictions of $f\wedge e_m$ and of the functions in $\mathcal{R}$ to $K_n$,
  shows that there exists an $r_{m,n}' \in \mathcal{R}$ fulfilling 
  $\abs[\big]{(f\wedge e_m)(x)-r_{m,n}'(x)} \le 1/n$ for all $x\in K_n$. Use this to
  define $r_{m,n} \coloneqq 0 \vee (r_{m,n}\wedge e_m) \in \mathcal{R}^+$, which 
  still fulfils $\abs[\big]{(f\wedge e_m)(x)-r_{m,n}(x)} \le 1/n$ for all $x\in K_n$.
  The resulting sequence $(r_{m,n})_{n\in \NN}$ in $\mathcal{R}^+$ is bounded from 
  above by $e_m \in \mathcal{R}^+ \subseteq \mathcal{I}^+$,
  and converges uniformly on every compact subset of $X$ against $f\wedge e_m$, because
  every compact subset of $X$ is eventually contained in all $K_n$ with $n\ge N$
  for sufficiently large $N\in \NN$. By Corollary~\ref{corollary:convergence},
  $(r_{m,n})_{n\in \NN}$ is strictly $\mathcal{I}$\=/convergent against $f\wedge e_m$,
  so $f\wedge e_m \in \mathcal{R}^\cl$. Finally, the resulting sequence $(f \wedge e_m)_{m\in \NN}$ in $\mathcal{R}^\cl$
  is increasing and has pointwise supremum $f$, hence is strictly $\mathcal{I}$\=/convergent
  against $f$, so $f\in \mathcal{R}^\cl$.
  
  As every element $g\in\mathcal{I}$ can be decomposed as a difference $g=f_+-f_-$
  of two elements $f_+,f_- \in \mathcal{I}^+$, e.g. $f_+ = g\vee 0$ and $f_- = -(g\wedge 0)$,
  and as $\mathcal{R}^\cl$ is a linear subspace of $\mathcal{I}$ by 
  Proposition~\ref{proposition:algebraPermanence}, it follows that $\mathcal{R}^\cl=\mathcal{I}$.
\end{proof}
\begin{example} \label{example:compactlysupported}
  Let $\mathcal{I}$ be a Riesz ideal of $\Stetig(X)$. Then $\Stetig_c(X)$ is strictly 
  point-separating and nowhere-vanishing because $X$ is a Tychonoff space, so the 
  $\mathcal{I}$\=/closure of $\Stetig_c(X)$ is $\mathcal{I}$.
\end{example}
In order to derive the second, algebraic version, one can use the usual
approximation of the square root function on compact intervals by polynomials:
\begin{lemmaNN}
  Let the sequence $( p_n )_{n\in \NN_0}$ of real-valued polynomial functions 
  on $\RR$ be defined recursively by $p_0 \coloneqq 0$ and $p_{n+1} \coloneqq p_n + \frac{1}{2}\big( \id_\RR - (p_n)^2 \big)$
  for all $n\in \NN_0$, where $\id_\RR \colon \RR\to \RR$, $x\mapsto \id_\RR(x)\coloneqq x$.
  Then all these polynomials have vanishing constant term,
  i.e. $p_n(0) = 0$ for all $n\in \NN$, and for every $x\in [0,1]$, the sequence
  $\big( p_n(x) \big)_{n\in \NN}$ in $\RR$ is increasing and has supremum
  $\sup_{n\in \NN} p_n(x) = \sqrt{x}$.
\end{lemmaNN}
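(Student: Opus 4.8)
The statement has three assertions about the sequence $(p_n)_{n\in\NN_0}$: that $p_n(0)=0$ for all $n$, that $\big(p_n(x)\big)_{n\in\NN}$ is increasing on $[0,1]$, and that its supremum is $\sqrt x$. The first is an immediate induction: $p_0(0)=0$, and if $p_n(0)=0$ then $p_{n+1}(0) = p_n(0) + \tfrac12\big(0 - p_n(0)^2\big) = 0$. This also shows each $p_n$ is a polynomial with vanishing constant term, since $\id_\RR$ has one and the recursion preserves that property.

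For the convergence claim the key is the algebraic identity obtained by subtracting both sides of the recursion from $\sqrt x$:
\begin{equation*}
  \sqrt x - p_{n+1}(x) = \big(\sqrt x - p_n(x)\big)\Big(1 - \tfrac12\big(\sqrt x + p_n(x)\big)\Big).
\end{equation*}
The plan is to prove by induction on $n$ that for every fixed $x\in[0,1]$ one has $0 \le p_n(x) \le \sqrt x$. This is clear for $n=0$. Assuming $0\le p_n(x)\le\sqrt x\le 1$, the factor $1 - \tfrac12\big(\sqrt x + p_n(x)\big)$ lies in $[0,1]$ (both $\sqrt x$ and $p_n(x)$ are in $[0,1]$, so their average is too), and hence the displayed identity gives $0 \le \sqrt x - p_{n+1}(x) \le \sqrt x - p_n(x)$, which simultaneously yields $p_{n+1}(x)\le\sqrt x$ and $p_{n+1}(x)\ge p_n(x)\ge 0$. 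Thus the sequence is increasing and bounded above by $\sqrt x$, so it converges to some limit $\ell(x)\le\sqrt x$; passing to the limit in the recursion gives $\ell(x) = \ell(x) + \tfrac12\big(x - \ell(x)^2\big)$, hence $\ell(x)^2 = x$, and since $\ell(x)\ge 0$ we get $\ell(x)=\sqrt x$. Because the sequence is increasing, $\sup_{n\in\NN}p_n(x)=\lim_n p_n(x)=\sqrt x$.

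The only mildly delicate point — and the one I would state carefully — is checking that the correction factor $1 - \tfrac12\big(\sqrt x + p_n(x)\big)$ is nonnegative, i.e. that $p_n(x)$ never overshoots $\sqrt x$; but this is exactly what the induction hypothesis supplies, so the argument closes cleanly. No compactness or uniform estimate is needed here since the lemma only claims pointwise monotone convergence (Dini's theorem will be invoked later, wherever this lemma is applied, to upgrade to uniform convergence on $[0,1]$).
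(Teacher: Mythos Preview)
Your proof is correct and follows essentially the same route as the paper: the key factorization $\sqrt{x}-p_{n+1}(x) = (\sqrt{x}-p_n(x))\big(1-\tfrac12(\sqrt{x}+p_n(x))\big)$ together with the induction $0\le p_n(x)\le\sqrt{x}$ is exactly what the paper uses. The only cosmetic difference is in identifying the limit: the paper bounds $\sqrt{x}-p_{n+1}(x)\le(\sqrt{x}-p_n(x))(1-\sqrt{x}/2)$ to force convergence for $x\in{]0,1]}$ and treats $x=0$ separately, whereas you pass to the limit in the recursion; both arguments are equally elementary.
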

\begin{proof}
  For convenience of the reader, a proof is given: If $n=0$, then it is clear that
  $p_0$ has vanishing constant term and that $0 \le p_0(x) \le \sqrt{x}$ for all $x\in [0,1]$.
  Now assume that it has been shown for one $n\in \NN_0$ that $p_n$ has vanishing
  constant term and that $0 \le p_n(x) \le \sqrt{x}$ holds for all $x\in [0,1]$.
  Then $p_{n+1}$ has again vanishing constant term and clearly 
  $p_{n+1}(x) \ge p_n(x) \ge 0$ for all $x\in [0,1]$. But the identity
  $\sqrt{x} - p_{n+1}(x) = \big(\sqrt{x}-p_n(x)\big)\big( 1 - \frac{1}{2}(\sqrt{x}+p_n(x))\big)$
  also implies $p_{n+1}(x) \le \sqrt{x}$ for all $x\in [0,1]$.
  
  It only remains to show that the increasing sequence $\big( p_n(x) \big)_{n\in \NN}$ in 
  $\RR$ has supremum $\sqrt{x}$. The last identity
  already shows that $0 \le \sqrt{x} - p_{n+1}(x) \le \big(\sqrt{x}-p_n(x)\big)\big( 1 - \sqrt{x}/2\big)$,
  so $\sup_{n\in \NN} p_n(x) = \sqrt{x}$ for all $x\in {]0,1]}$. If $x=0$, however,
  then $p_n(0) = 0$ holds for all $n\in \NN$ and again $\sup_{n\in \NN} p_n(0) = \sqrt{0}$.
\end{proof}
\begin{theorem} \label{theorem:convergeAlg}
  Let $\mathcal{I}$ be a Riesz ideal of $\Stetig(X)$ and $\mathcal{B}$ a 
  strictly point-separating and nowhere-vanishing linear subspace of $\mathcal{I} \cap \Stetig_b(X)$
  that is also a subalgebra of $\Stetig_b(X)$, then the $\mathcal{I}$\=/closure of
  $\mathcal{B}$ is whole $\mathcal{I}$.
\end{theorem}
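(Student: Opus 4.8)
The plan is to reduce the statement to the Riesz-subspace version, Theorem~\ref{theorem:convergence}. Write $\mathcal{B}^\cl$ for the $\mathcal{I}$\=/closure of $\mathcal{B}$. By Proposition~\ref{proposition:algebraPermanence} it is a linear subspace of $\mathcal{I}$, and by Proposition~\ref{proposition:topology} it is itself $\mathcal{I}$\=/closed. If I can show in addition that $\mathcal{B}^\cl$ is a Riesz subspace of $\mathcal{I}$, then, since $\mathcal{B}\subseteq\mathcal{B}^\cl$ is strictly point-separating and nowhere-vanishing, so is $\mathcal{B}^\cl$, and Theorem~\ref{theorem:convergence} applied to the Riesz subspace $\mathcal{B}^\cl$ shows that its $\mathcal{I}$\=/closure is whole $\mathcal{I}$; but that $\mathcal{I}$\=/closure is just $\mathcal{B}^\cl$ again, so $\mathcal{B}^\cl=\mathcal{I}$. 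Thus everything reduces to producing absolute values inside $\mathcal{B}^\cl$.

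\textbf{Absolute values of elements of $\mathcal{B}$.} The heart of the argument is the approximation of $\abs{\beta}=\sqrt{\beta^2}$ for $\beta\in\mathcal{B}$ by the polynomials $(p_n)_{n\in\NN}$ from the lemma preceding the statement. Fix $\beta\in\mathcal{B}\backslash\{0\}$ and choose $\lambda\in{]0,\infty[}$ with $\lambda^2\beta^2\le\Unit$, which is possible since $\beta\in\Stetig_b(X)$. Because each $p_n$ has vanishing constant term, $p_n(\lambda^2\beta^2)$ is a linear combination of the powers $\beta^2,\beta^4,\dots$, all of which lie in the subalgebra $\mathcal{B}$, so $p_n(\lambda^2\beta^2)\in\mathcal{B}$; here it is essential that the $p_n$ have vanishing constant term, as $\mathcal{B}$ need not contain $\Unit$. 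For every $x\in X$ one has $\lambda^2\beta(x)^2\in[0,1]$, so by the lemma the sequence $\big(p_n(\lambda^2\beta^2)(x)\big)_{n\in\NN}$ is increasing with supremum $\sqrt{\lambda^2\beta(x)^2}=\lambda\abs{\beta}(x)$; in particular $0\le p_n(\lambda^2\beta^2)\le\lambda\abs{\beta}\in\mathcal{I}^+$ for all $n\in\NN$. Applying Dini's theorem on each compact subset of $X$ to the decreasing sequence $\lambda\abs{\beta}-p_n(\lambda^2\beta^2)$ with pointwise infimum $0$ shows that $\big(p_n(\lambda^2\beta^2)\big)_{n\in\NN}$ converges to $\lambda\abs{\beta}$ uniformly on all compact subsets of $X$. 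By Corollary~\ref{corollary:convergence} this sequence is strictly $\mathcal{I}$\=/convergent against $\lambda\abs{\beta}$, hence $\lambda\abs{\beta}\in\mathcal{B}^\cl$, and therefore $\abs{\beta}\in\mathcal{B}^\cl$ since $\mathcal{B}^\cl$ is a linear subspace.

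\textbf{Bootstrapping to all of $\mathcal{B}^\cl$.} It remains to pass from $\mathcal{B}$ to $\mathcal{B}^\cl$. I would consider $T\coloneqq\set[\big]{g\in\mathcal{B}^\cl}{\abs{g}\in\mathcal{B}^\cl}$. The previous step gives $\mathcal{B}\subseteq T\subseteq\mathcal{B}^\cl$, and $T$ is $\mathcal{I}$\=/closed: if $(g_j)_{j\in\NN}$ is a sequence in $T$ that is strictly $\mathcal{I}$\=/convergent against some $\hat g\in\mathcal{I}$, then $\hat g\in\mathcal{B}^\cl$ by the $\mathcal{I}$\=/closedness of $\mathcal{B}^\cl$, while $\big(\abs{g_j}\big)_{j\in\NN}$ is strictly $\mathcal{I}$\=/convergent against $\abs{\hat g}$ by one of the elementary permanence properties of strict convergence listed before Proposition~\ref{proposition:algebraPermanence}; since each $\abs{g_j}\in\mathcal{B}^\cl$ and $\mathcal{B}^\cl$ is $\mathcal{I}$\=/closed, $\abs{\hat g}\in\mathcal{B}^\cl$, i.e. $\hat g\in T$. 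By the definition of the $\mathcal{I}$\=/closure, $S\subseteq T\subseteq S^\cl$ with $S=\mathcal{B}$ forces $T=\mathcal{B}^\cl$, so $\mathcal{B}^\cl$ is a Riesz subspace of $\mathcal{I}$, and the reduction described in the first paragraph finishes the proof.

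\textbf{Main obstacle.} The only genuinely delicate point is the interplay between the missing unit in $\mathcal{B}$ and the approximating polynomials: one needs the $p_n$ to have vanishing constant term so that $p_n(\beta^2)$ stays inside $\mathcal{B}$, and one needs the bound $p_n\le\sqrt{\,\cdot\,}$ on $[0,1]$ to exhibit the dominating element $\lambda\abs{\beta}\in\mathcal{I}^+$ required by Corollary~\ref{corollary:convergence}. Everything else is the same formal closure bookkeeping already used in Proposition~\ref{proposition:algebraPermanence}.
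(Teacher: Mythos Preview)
Your proof is correct and slightly more direct than the paper's. The paper does not work immediately with the $\mathcal{I}$\=/closure: since $\mathcal{I}$ is only assumed to be a Riesz ideal, not a subalgebra, the subalgebra clause of Proposition~\ref{proposition:algebraPermanence} is unavailable, and the paper cannot conclude that the $\mathcal{I}$\=/closure of $\mathcal{B}$ is a subalgebra. To get around this, the paper introduces the Riesz ideal $\mathcal{J}$ generated by $\mathcal{B}$ in $\Stetig(X)$, which \emph{is} a subalgebra of $\Stetig_b(X)$, applies Proposition~\ref{proposition:algebraPermanence} there to see that the $\mathcal{J}$\=/closure $\mathcal{B}^\cl_{\mathcal{J}}$ is a subalgebra, performs the square-root approximation for all $b\in\mathcal{B}^\cl_{\mathcal{J}}$ at once (using that such $b$ are uniformly bounded), and only then compares the $\mathcal{J}$\=/ and $\mathcal{I}$\=/closures. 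You sidestep this detour entirely: you run the square-root approximation only for $\beta\in\mathcal{B}$, where $p_n(\lambda^2\beta^2)\in\mathcal{B}$ is automatic, and then pass to the full $\mathcal{I}$\=/closure via the $T$-argument, reusing the idea of the Riesz clause of Proposition~\ref{proposition:algebraPermanence} with the weaker input $\abs{\beta}\in\mathcal{B}^\cl$ for $\beta\in\mathcal{B}$. The gain is that you never need to know the closure is a subalgebra, so the auxiliary ideal $\mathcal{J}$ and the comparison of two closures become unnecessary; the paper's route, on the other hand, keeps all closure bookkeeping inside Proposition~\ref{proposition:algebraPermanence} as a black box, at the price of introducing $\mathcal{J}$. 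Minor remark: your appeal to Dini plus Corollary~\ref{corollary:convergence} is fine but not needed, since an increasing sequence in $\mathcal{I}$ with pointwise supremum in $\mathcal{I}$ is strictly $\mathcal{I}$\=/convergent by definition.
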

\begin{proof}
  Let 
  $\mathcal{J} \coloneqq \set[\big]{f\in \Stetig(X)}{\exists_{N\in \NN;\,b_1,\dots,b_N\in \mathcal{B}}: \abs{f} \le \sum_{n=1}^N\abs{b_n}}$
  be the Riesz ideal generated by $\mathcal{B}$ in $\Stetig(X)$. Then $\mathcal{J}$ 
  is also a subalgebra of $\Stetig_b(X)$ and $\mathcal{B} \subseteq \mathcal{J} \subseteq \mathcal{I}$.
  Write $\mathcal{B}^\cl_{\mathcal{J}}$ for the $\mathcal{J}$\=/closure of $\mathcal{B}$, then
  the first step is to show that $\mathcal{B}^\cl_{\mathcal{J}}$ is a Riesz subspace of $\mathcal{J}$.
  From Proposition~\ref{proposition:algebraPermanence} it follows that 
  $\mathcal{B}^\cl_{\mathcal{J}}$ is again a subalgebra of $\mathcal{J}$. So 
  $q\circ b \in \mathcal{B}^\cl_{\mathcal{J}}$ for all $b\in \mathcal{B}^\cl_{\mathcal{J}}$ 
  and every polynomial function $q \colon \RR\to\RR$ with vanishing constant term.
  Now let $(p_n)_{n\in \NN}$ be a sequence of polynomials with vanishing constant term 
  such that $\big(p_n(x)\big)_{n\in \NN}$ is an increasing sequence in $\RR$ with supremum $\sqrt{x}$
  for every $x\in [0,1]$.
  Then for every $b \in \mathcal{B}^\cl_{\mathcal{J}}$ with $-\Unit \le b \le \Unit$ the sequence
  $\NN \ni n \mapsto p_n \circ b^2 \in \mathcal{B}^\cl_{\mathcal{J}}$ is increasing and has pointwise
  supremum $\sqrt{\argument} \circ b^2 = \abs{b} \in \mathcal{J}$, so it is 
  $\mathcal{J}$\=/convergent against $\abs{b}$. This shows that $\abs{b} \in \mathcal{B}^\cl_{\mathcal{J}}$
  for all $b \in \mathcal{B}^\cl_{\mathcal{J}}$ with $-\Unit \le b \le \Unit$. As all functions in $\mathcal{B}^\cl_{\mathcal{J}}$
  are uniformly bounded, there exists for every $b\in \mathcal{B}^\cl_{\mathcal{J}}$ a $\lambda \in {]0,\infty[}$
  such that $- \Unit \le \lambda b \le \Unit$ and thus $\abs{b} = \lambda^{-1} \abs{\lambda b} \in \mathcal{B}^\cl_{\mathcal{J}}$.
  We conclude that $\mathcal{B}^\cl_{\mathcal{J}}$ is a Riesz subspace of $\mathcal{J}$, and it is
  again strictly point-separating and nowhere-vanishing because it contains $\mathcal{B}$. 
  
  Now let $\mathcal{B}^\cl_{\mathcal{I}}$ be the $\mathcal{I}$\=/closure of $\mathcal{B}$,
  then $\mathcal{B}^\cl_{\mathcal{I}} \cap \mathcal{J}$ is $\mathcal{J}$\=/closed, because
  every strictly $\mathcal{J}$\=/convergent sequence $(g_n)_{n\in \NN}$ in 
  $\mathcal{B}^\cl_{\mathcal{I}} \cap \mathcal{J}$ with strict $\mathcal{J}$\=/limit 
  $\hat{g} \in \mathcal{J}$ is also strictly $\mathcal{I}$\=/convergent against the 
  same strict $\mathcal{I}$\=/limit $\hat{g}$ because $\mathcal{J}\subseteq \mathcal{I}$.
  Hence $\hat{g} \in \mathcal{B}^\cl_{\mathcal{I}}$
  and thus even $\hat{g} \in \mathcal{B}^\cl_{\mathcal{I}} \cap \mathcal{J}$. 
  Consequently, $\mathcal{B}^\cl_{\mathcal{J}} \subseteq \mathcal{B}^\cl_{\mathcal{I}} \cap \mathcal{J}
  \subseteq \mathcal{B}^\cl_{\mathcal{I}}$ holds, so the $\mathcal{I}$\=/closure
  $\mathcal{B}^\cl_{\mathcal{I}}$ of $\mathcal{B}$ contains the 
  strictly point-separating and nowhere-vanishing Riesz subspace $\mathcal{B}^\cl_{\mathcal{J}}$
  of $\mathcal{I}$. By Theorem~\ref{theorem:convergence}, this means that 
  $\mathcal{B}^\cl_{\mathcal{I}}=\mathcal{I}$.
\end{proof}
\begin{example}
  Assume that $X$ is a smooth manifold (which is locally compact Hausdorff and second
  countable, hence indeed a Lindelöf space) and let $\mathcal{I}$ be a Riesz ideal of 
  $\Stetig(X)$. Then $\mathcal{I}$ contains the space $\Stetig^\infty_c(X)$ of all
  smooth real-valued functions on $X$ with compact support because 
  $\Stetig^\infty_c(X) \subseteq \Stetig_c(X) \subseteq \mathcal{I}$. Moreover,
  $\Stetig^\infty_c(X)$ is strictly point-separating and nowhere-vanishing
  because for every $x\in X$ and every open neighbourhood $U$ of $x$
  with compact closure one can (using local coordinates around $x$) construct an 
  $f_x\in \Stetig^\infty_c(X)$ with $f_x(x)\neq 0$ and $f_x(y) = 0$ for all $y\in X\backslash U$.
  So the $\mathcal{I}$\=/closure of $\Stetig^\infty_c(X)$ is whole $\mathcal{I}$.
\end{example}
Important consequences of these theorems are sufficient conditions under which two positive 
linear maps on a Riesz ideal of $\Stetig(X)$ coincide. These make use of the following
lemma:
\begin{lemma} \label{lemma:PhiPsi}
  Let $\mathcal{I}$ be a Riesz ideal of $\Stetig(X)$ and $V$ an ordered vector space.
  If $\Phi,\Psi \colon \mathcal{I}\to V$ are two positive linear maps and strictly 
  $\mathcal{I}$\=/continuous, then the subset $\set[\big]{g\in \mathcal{I}}{\Phi(g)=\Psi(g)}$
  of $\mathcal{I}$ on which $\Phi$ and $\Psi$ coincide is $\mathcal{I}$\=/closed.
\end{lemma}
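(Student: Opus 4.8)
Set $E \coloneqq \set[\big]{g\in \mathcal{I}}{\Phi(g)=\Psi(g)}$. We must show that if $(s_n)_{n\in\NN}$ is a sequence in $E$ that is strictly $\mathcal{I}$\=/convergent against some $\hat{s}\in\mathcal{I}$, then $\hat{s}\in E$, i.e. $\Phi(\hat{s})=\Psi(\hat{s})$. The idea is to exploit the witness sequence $(f_k)_{k\in\NN}$ from the definition of strict $\mathcal{I}$\=/convergence together with the strict $\mathcal{I}$\=/continuity of $\Phi$ and $\Psi$ to squeeze $\Phi(\hat{s})$ and $\Psi(\hat{s})$ between the common values $\Phi(s_n)=\Psi(s_n)$ up to error terms that vanish.

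\textbf{Key steps.} First I would fix the decreasing sequence $(f_k)_{k\in\NN}$ in $\mathcal{I}$ with pointwise infimum $0$ witnessing the strict $\mathcal{I}$\=/convergence of $(s_n)_{n\in\NN}$ against $\hat{s}$: for every $k\in\NN$ there is an $N_k\in\NN$ with $\abs{\hat{s}-s_n}\le f_k$, equivalently $s_n - f_k \le \hat{s} \le s_n + f_k$, for all $n\ge N_k$. Applying the positive linear map $\Phi$ and using $\Phi(s_n)=\Psi(s_n)$, this gives, for each $k\in\NN$ and picking any $n\ge N_k$,
\begin{equation*}
  \Psi(s_n) - \Phi(f_k) \;=\; \Phi(s_n) - \Phi(f_k) \;\le\; \Phi(\hat{s}) \;\le\; \Phi(s_n) + \Phi(f_k) \;=\; \Psi(s_n) + \Phi(f_k),
\end{equation*}
and symmetrically $\Psi(s_n) - \Psi(f_k) \le \Psi(\hat{s}) \le \Psi(s_n) + \Psi(f_k)$. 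Subtracting, we obtain $-\Phi(f_k) - \Psi(f_k) \le \Phi(\hat{s}) - \Psi(\hat{s}) \le \Phi(f_k) + \Psi(f_k)$, i.e. $\pm\big(\Phi(\hat{s}) - \Psi(\hat{s})\big) \le \Phi(f_k) + \Psi(f_k)$ for every $k\in\NN$. Now strict $\mathcal{I}$\=/continuity of $\Phi$ and of $\Psi$ tells us that the decreasing sequences $\big(\Phi(f_k)\big)_{k\in\NN}$ and $\big(\Psi(f_k)\big)_{k\in\NN}$ each have infimum $0$ in $V$; since these are decreasing sequences, $\big(\Phi(f_k)+\Psi(f_k)\big)_{k\in\NN}$ is also decreasing, and a short argument shows its infimum is $0$ as well (the infimum of a sum of two decreasing sequences each with infimum $0$ is $0$ in any ordered vector space: if $v \le \Phi(f_k)+\Psi(f_k)$ for all $k$, one first compares with $\Phi(f_k) + \Psi(f_1)$, then uses the Archimedean-free elementary fact — actually here one directly uses that both $\inf_k \Phi(f_k)=0$ and $\inf_k\Psi(f_k)=0$ plus the shift $v - \Psi(f_j) \le \Phi(f_k)$ for $k\ge j$). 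Hence $\pm\big(\Phi(\hat{s})-\Psi(\hat{s})\big)$ is a lower bound of a decreasing sequence with infimum $0$, which forces $\pm\big(\Phi(\hat{s})-\Psi(\hat{s})\big)\le 0$, so $\Phi(\hat{s})=\Psi(\hat{s})$ and $\hat{s}\in E$.

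\textbf{Main obstacle.} The only delicate point is the lemma that in an ordered vector space the sum of two decreasing sequences each having infimum $0$ again has infimum $0$; unlike in a Riesz space this is not completely automatic and needs the elementary shifting argument sketched above (for a lower bound $v$ of $\big(\Phi(f_k)+\Psi(f_k)\big)_k$, fix $j$ and note $v - \Psi(f_j) \le v - \Psi(f_k) \le \Phi(f_k)$ for all $k\ge j$, hence $v-\Psi(f_j)\le 0$ for all $j$ since $\inf_k\Phi(f_k)=0$, i.e. $v\le\Psi(f_j)$ for all $j$, hence $v\le 0$). Everything else is routine manipulation with positive linear maps.
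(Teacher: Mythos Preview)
Your proof is correct and follows essentially the same approach as the paper's: both bound $\Phi(\hat{s})-\Psi(\hat{s})$ by $\Phi(f_k)+\Psi(f_\ell)$ and then push these error terms to zero using strict $\mathcal{I}$\=/continuity. The only cosmetic difference is that the paper decouples the two indices from the outset (bounding by $\Phi(f_k)+\Psi(f_\ell)$ for all $k,\ell$), which makes your ``shifting'' argument for $\inf_k\big(\Phi(f_k)+\Psi(f_k)\big)=0$ implicit rather than a separate step.
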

\begin{proof}
  Let $(g_n)_{n\in \NN}$ be a sequence in $\set[\big]{g\in \mathcal{I}}{\Phi(g)=\Psi(g)}$
  which is strictly $\mathcal{I}$\=/convergent against a strict $\mathcal{I}$\=/limit 
  $\hat{g} \in \mathcal{I}$. Then there exists a decreasing sequence $(f_k)_{k\in \NN}$ in $\mathcal{I}$
  with pointwise infimum $0$ and such that for all $k\in \NN$ there is an $N\in \NN$ so that
  $\abs{\hat{g}-g_n} \le f_k$ for all $n\in \NN$ with $n\ge N$.
  As 
  \begin{equation*}
    \Phi(\hat{g})-\Psi(\hat{g})
    = 
    \Phi(\hat{g}-g_n)+\Psi(g_n-\hat{g})
    \le
    \Phi\big(\abs{\hat{g}-g_n}\big)+\Psi\big(\abs{g_n-\hat{g}}\big)
  \end{equation*}
  holds for all $n\in \NN$, it follows that the estimate
  $\Phi(\hat{g})-\Psi(\hat{g}) \le \Phi\big(\abs{\hat{g}-g_n} \big) + \Psi\big(\abs{\hat{g}-g_n}\big)
  \le \Phi(f_k) + \Psi(f_\ell)$ holds for all $k,\ell \in \NN$ if $n\in \NN$ is 
  chosen sufficiently large ($n$ may depend on $k$ and $\ell$). As $\Phi$ and $\Psi$
  are both strictly $\mathcal{I}$\=/continuous by assumption, 
  this implies $\Phi(\hat{g})-\Psi(\hat{g}) \le \inf_{k\in \NN}\Phi(f_k) + \inf_{\ell \in \NN} \Psi(f_\ell) = 0$,
  i.e. $\Phi(\hat{g}) \le \Psi(\hat{g})$. Exchanging $\Phi$ and $\Psi$ shows $\Phi(\hat{g}) \ge \Psi(\hat{g})$,
  so $\Phi(\hat{g}) = \Psi(\hat{g})$.
\end{proof}
An immediate consequence of Theorems~\ref{theorem:convergence}
and \ref{theorem:convergeAlg} and this Lemma~\ref{lemma:PhiPsi} is:
\begin{proposition} \label{proposition:coincidence}
  Let $\mathcal{I}$ be a Riesz ideal of $\Stetig(X)$, let $V$ be an ordered vector
  space and $\Phi,\Psi\colon \mathcal{I} \to V$ two positive linear maps that are
  strictly $\mathcal{I}$\=/continuous. If $\Phi$ and $\Psi$ coincide on a strictly 
  point-separating and nowhere-vanishing Riesz subspace $\mathcal{R}$ of $\mathcal{I}$,
  or on a strictly point-separating and nowhere-vanishing linear subspace $\mathcal{B}$
  of $\mathcal{I} \cap \Stetig_b(X)$ that is also a subalgebra of $\Stetig_b(X)$, 
  then $\Phi = \Psi$.
\end{proposition}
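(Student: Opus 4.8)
The plan is to combine the two generalized Stone-Weierstraß theorems with Lemma~\ref{lemma:PhiPsi}, which together do essentially all the work. First I would form the set $E \coloneqq \set[\big]{g\in \mathcal{I}}{\Phi(g)=\Psi(g)}$ on which the two maps agree. Since $\Phi$ and $\Psi$ are linear, $E$ is a linear subspace of $\mathcal{I}$; and since both maps are positive and strictly $\mathcal{I}$\=/continuous, Lemma~\ref{lemma:PhiPsi} tells us that $E$ is $\mathcal{I}$\=/closed.

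Next I would treat the two hypotheses separately. In the first case, $\Phi$ and $\Psi$ coincide on a strictly point-separating and nowhere-vanishing Riesz subspace $\mathcal{R}$ of $\mathcal{I}$, which means $\mathcal{R} \subseteq E$. As $E$ is $\mathcal{I}$\=/closed, it contains the $\mathcal{I}$\=/closure of $\mathcal{R}$, and by Theorem~\ref{theorem:convergence} this $\mathcal{I}$\=/closure is whole $\mathcal{I}$. Hence $E = \mathcal{I}$, i.e. $\Phi = \Psi$. In the second case, $\mathcal{B} \subseteq E$ for a strictly point-separating and nowhere-vanishing linear subspace $\mathcal{B}$ of $\mathcal{I}\cap\Stetig_b(X)$ that is also a subalgebra of $\Stetig_b(X)$; since $E$ is $\mathcal{I}$\=/closed it contains the $\mathcal{I}$\=/closure of $\mathcal{B}$, which by Theorem~\ref{theorem:convergeAlg} is again whole $\mathcal{I}$, so once more $\Phi = \Psi$.

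There is really no substantive obstacle here: the proof is a bookkeeping assembly of results already proved. The only point that needs a moment's care is checking that $E$ is a \emph{linear} subspace (so that ``the $\mathcal{I}$\=/closure of $\mathcal{R}$'' or ``of $\mathcal{B}$'' lands inside $E$ rather than merely inside the $\mathcal{I}$\=/closure of $E$) — but linearity of $\Phi$ and $\Psi$ makes this immediate, and in any case we only need the weaker statement that $E$ is an $\mathcal{I}$\=/closed set containing the relevant generating subspace, which is exactly what Lemma~\ref{lemma:PhiPsi} and the hypothesis supply. So the whole argument is a two-line deduction in each of the two cases. I would write it up in one short paragraph, invoking Lemma~\ref{lemma:PhiPsi} for $\mathcal{I}$\=/closedness of $E$ and then Theorem~\ref{theorem:convergence} respectively Theorem~\ref{theorem:convergeAlg} to conclude $E = \mathcal{I}$.
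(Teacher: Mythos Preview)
Your proposal is correct and matches the paper's approach exactly: the paper presents this proposition as an immediate consequence of Theorems~\ref{theorem:convergence} and~\ref{theorem:convergeAlg} together with Lemma~\ref{lemma:PhiPsi}, without writing out any further details. Your explicit unpacking---forming $E$, using Lemma~\ref{lemma:PhiPsi} for $\mathcal{I}$\=/closedness, and then invoking the respective density theorem---is precisely the intended argument.
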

This yields a condition for Riesz morphisms to coincide,
as every Riesz morphism is especially a positive linear map and as the set on which
two Riesz morphisms coincide is a Riesz subspace:
\begin{corollary} \label{corollary:coincidenceRiesz}
  Let $\mathcal{I}$ be a Riesz ideal of $\Stetig(X)$ and $\mathcal{R}$ a Riesz space.
  If $\Phi,\Psi\colon \mathcal{I} \to \mathcal{R}$ are two morphisms of Riesz spaces that
  are strictly $\mathcal{I}$\=/continuous and that coincide on a strictly point-separating
  and nowhere-vanishing subset $S$ of $\mathcal{I}$, then $\Phi = \Psi$.
\end{corollary}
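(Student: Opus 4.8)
The plan is to reduce this statement to Proposition~\ref{proposition:coincidence} by enlarging the point-separating set $S$ to a Riesz subspace on which $\Phi$ and $\Psi$ still coincide. First I would form the linear span $\mathcal{R}_0 \coloneqq \operatorname{span} S \subseteq \mathcal{I}$; since $\Phi$ and $\Psi$ are linear and agree on $S$, they agree on $\mathcal{R}_0$. Next I would pass to the Riesz subspace of $\mathcal{I}$ generated by $S$, that is, the smallest Riesz subspace $\mathcal{R}$ of $\mathcal{I}$ containing $S$. Because $\Phi$ and $\Psi$ are morphisms of Riesz spaces, the set $\set[\big]{g\in\mathcal{I}}{\Phi(g)=\Psi(g)}$ is a Riesz subspace of $\mathcal{I}$: it is a linear subspace (difference of two linear maps) and it is closed under absolute values since $\Phi(\abs{g})=\abs[\big]{\Phi(g)}=\abs[\big]{\Psi(g)}=\Psi(\abs{g})$ whenever $\Phi(g)=\Psi(g)$. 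Hence this set is a Riesz subspace containing $S$, so it contains $\mathcal{R}$, i.e. $\Phi$ and $\Psi$ coincide on $\mathcal{R}$.

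It remains to observe that $\mathcal{R}$ is still strictly point-separating and nowhere-vanishing. This is immediate: since $S\subseteq\mathcal{R}$, for any $x,y\in X$ with $x\neq y$ the element $s\in S$ with $s(x)\neq 0$ and $s(y)=0$ already lies in $\mathcal{R}$, and similarly for the nowhere-vanishing property, for any $x\in X$ some $s\in S\subseteq\mathcal{R}$ has $s(x)\neq 0$. So $\mathcal{R}$ is a strictly point-separating and nowhere-vanishing Riesz subspace of $\mathcal{I}$ on which the strictly $\mathcal{I}$\=/continuous positive linear maps $\Phi$ and $\Psi$ coincide, and Proposition~\ref{proposition:coincidence} yields $\Phi=\Psi$.

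I do not expect a serious obstacle here; the only point requiring a little care is the verification that the coincidence set of two Riesz morphisms is genuinely a Riesz subspace — but this follows directly from the defining property $\Phi(\abs{g})=\abs[\big]{\Phi(g)}$ of a morphism of Riesz spaces, as sketched above — together with the elementary remark that enlarging a point-separating (respectively nowhere-vanishing) set to a Riesz subspace preserves these separation properties. Everything else is a direct appeal to Proposition~\ref{proposition:coincidence}.
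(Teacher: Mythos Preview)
Your argument is correct and follows exactly the approach indicated in the paper: the key observation is that the coincidence set of two Riesz morphisms is a Riesz subspace (since $\Phi(\abs{g})=\abs{\Phi(g)}=\abs{\Psi(g)}=\Psi(\abs{g})$), which therefore contains a strictly point-separating and nowhere-vanishing Riesz subspace, and Proposition~\ref{proposition:coincidence} applies. One cosmetic remark: you reuse the symbol $\mathcal{R}$ for the Riesz subspace generated by $S$, while in the statement $\mathcal{R}$ already denotes the target Riesz space; choosing a different letter would avoid the clash.
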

The algebraic version of Proposition~\ref{proposition:coincidence} is
not so trivial to apply, because it refers to a subalgebra of \emph{uniformly bounded}
functions. But sometimes this can be guaranteed:
\begin{corollary} \label{corollary:coincidenceAlg}
  Let $\mathcal{I}$ be a Riesz ideal and unital subalgebra of $\Stetig(X)$ and $\mathcal{A}$ 
  an ordered unital algebra. If $\Phi,\Psi\colon \mathcal{I} \to \mathcal{A}$ are two 
  unital morphisms of algebras that are strictly $\mathcal{I}$\=/continuous and that
  coincide on a point-separating subset $S$ of $\mathcal{I}$, then $\Phi = \Psi$.
\end{corollary}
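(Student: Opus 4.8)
The plan is to reduce the claim to the algebraic Stone--Weierstra\ss{} theorem, Theorem~\ref{theorem:convergeAlg}, applied to the set on which $\Phi$ and $\Psi$ agree. First I would record that $\Phi$ and $\Psi$ are automatically positive: for $h\in\mathcal{I}^+$ the function $\sqrt{h}\in\Stetig(X)$ satisfies $0\le\sqrt{h}\le\Unit+h\in\mathcal{I}$, hence $\sqrt{h}\in\mathcal{I}$ since $\mathcal{I}$ is a Riesz ideal, and then $\Phi(h)=\Phi(\sqrt{h})^2\in\mathcal{A}^+$ because squares are positive in the ordered algebra $\mathcal{A}$; the same argument works for $\Psi$. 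Consequently the coincidence set $C\coloneqq\set[\big]{g\in\mathcal{I}}{\Phi(g)=\Psi(g)}$ is $\mathcal{I}$-closed by Lemma~\ref{lemma:PhiPsi}. Since $\Phi$ and $\Psi$ are linear unital algebra morphisms and $\mathcal{I}$ is a subalgebra, $C$ is moreover a unital subalgebra of $\mathcal{I}$, and it contains $S$ by hypothesis.

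The crucial extra input is that $C$ is closed under those multiplicative inverses that happen to lie in $\mathcal{I}$: if $f\in C$ and $g\in\mathcal{I}$ with $fg=\Unit$, then, $\mathcal{I}$ being commutative, $\Phi(g)$ is the two-sided inverse of $\Phi(f)$ and $\Psi(g)$ is the two-sided inverse of $\Psi(f)$ in $\mathcal{A}$; as $\Phi(f)=\Psi(f)$ and two-sided inverses in a unital algebra are unique, $\Phi(g)=\Psi(g)$, so $g\in C$. Applying this with $f=\Unit+s^2\in C$ for $s\in S$, whose inverse $(\Unit+s^2)^{-1}$ lies in $\Stetig_b(X)\subseteq\mathcal{I}$, I get $(\Unit+s^2)^{-1}\in C$, and then also $s\,(\Unit+s^2)^{-1}\in C$ because $C$ is a subalgebra and $s\in C$. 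Both of these functions are uniformly bounded.

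With this in hand I would set $\mathcal{B}\coloneqq C\cap\Stetig_b(X)$, a subalgebra of $\Stetig_b(X)$ contained in $\mathcal{I}\cap\Stetig_b(X)$, which contains $\Unit$ and, for every $s\in S$, the bounded functions $(\Unit+s^2)^{-1}$ and $s\,(\Unit+s^2)^{-1}$. One checks $\mathcal{B}$ is point-separating: given $x\neq y$, pick $s\in S$ with $s(x)\neq s(y)$; then either $s(x)^2\neq s(y)^2$ and $(\Unit+s^2)^{-1}$ separates $x$ and $y$, or $s(x)=-s(y)\neq 0$ and $s\,(\Unit+s^2)^{-1}$ does. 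As $\Unit\in\mathcal{B}$, point-separation upgrades to strict point-separation together with the nowhere-vanishing property, so Theorem~\ref{theorem:convergeAlg} yields that the $\mathcal{I}$-closure of $\mathcal{B}$ is whole $\mathcal{I}$. Since $\mathcal{B}\subseteq C$ and $C$ is $\mathcal{I}$-closed, this forces $C=\mathcal{I}$, i.e.\ $\Phi=\Psi$.

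The step I expect to be the real obstacle is producing bounded functions inside $C$. Mimicking the proof of Theorem~\ref{theorem:convergeAlg}, one would try to approximate a truncation of an unbounded $s\in S$ by polynomials in $s$, but these necessarily have unbounded degree, so there is no single $b\in\mathcal{I}^+$ dominating them all and Corollary~\ref{corollary:convergence} is inapplicable; indeed the $\mathcal{I}$-closure of the subalgebra generated by $S$ may contain no non-constant bounded function at all. The way around this is to exploit that $\Phi$ and $\Psi$ are algebra morphisms, not merely positive maps, together with the uniqueness of multiplicative inverses in any unital algebra (so that no lattice or topological structure on $\mathcal{A}$ is needed) --- which hands the bounded functions $(\Unit+s^2)^{-1}$ to $C$ for free.
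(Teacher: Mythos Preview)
Your proof is correct and follows essentially the same route as the paper's: both arguments first observe positivity via square roots, then exploit that unital algebra morphisms preserve two-sided inverses to manufacture bounded functions in the coincidence set, and finally invoke the algebraic Stone--Weierstra\ss{} theorem. The only cosmetic difference is the choice of bounded separating functions: you use $(\Unit+s^2)^{-1}$ together with $s\,(\Unit+s^2)^{-1}$, while the paper uses $\big((s\pm\Unit)^2+\Unit\big)^{-1}$; both work for the same reason.
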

\begin{proof}
  First note that $\Phi$ and $\Psi$ are automatically positive, so the assumption
  of strict $\mathcal{I}$\=/continuity of $\Phi$ and $\Psi$ makes sense: Indeed, every $g\in \Stetig(X)^+$
  is a square of its square root $\sqrt{g} \in \Stetig(X)^+$, and if $g\in \mathcal{I}^+$,
  then $0 \le (\Unit-\sqrt{g})^2 = \Unit-2\sqrt{g} + g$ shows that $\sqrt{g} \le (\Unit+g)/2$,
  so $\sqrt{g} \in \mathcal{I}$ as well. Because of this, $\Phi$ and $\Psi$ map $g = \sqrt{g}^{2}$
  to a square, which is positive in $\mathcal{A}$.
  
  From $\Unit \in \mathcal{I}$ it follows that $\Stetig_b(X)\subseteq \mathcal{I}$.
  The subset $\mathcal{B} \coloneqq \set[\big]{g\in \Stetig_b(X)}{\Phi(g)=\Psi(g)}$ of $\mathcal{I}$ 
  is a unital subalgebra of $\Stetig_b(X)$ because $\Phi$ and $\Psi$ are unital morphisms of algebras.
  In order to apply Proposition~\ref{proposition:coincidence}, it only remains to show that 
  $\mathcal{B}$ is point-separating (hence strictly point-separating and nowhere-vanishing because
  $\Unit\in\mathcal{B}$):
  Given $x,y\in X$ with $x\neq y$, then by assumption there exists
  an $s\in S$ such that $s(x) \neq s(y)$. Use this to define $s_+ \coloneqq (s+\Unit)^2 + \Unit$
  and $s_- \coloneqq (s-\Unit)^2 + \Unit$, then $4s = s_+ - s_-$. So at least one of
  $s_+$ and $s_-$, which will be denoted by $s_\pm$,
  fulfils $s_\pm(x)\neq s_\pm(y)$. From $\Phi(s) = \Psi(s)$ it follows that $\Phi(s_\pm) = \Psi(s_\pm)$.
  As $s_\pm \ge \Unit$, the pointwise multiplicative inverse $s_\pm^{-1} \in \Stetig_b(X)$
  exists and fulfils $s_\pm^{-1}(x) \neq s_\pm^{-1}(y)$, but
  $\Phi\big( s_\pm^{-1} \big)
    =
    \Phi\big( s_\pm^{-1} \big) \Psi\big( s_\pm \big) \Psi\big( s_\pm^{-1} \big)
    =
    \Phi\big( s_\pm^{-1} \big) \Phi\big( s_\pm \big) \Psi\big( s_\pm^{-1} \big)
    =
    \Psi\big( s_\pm^{-1} \big)$
  also holds, so $s_\pm^{-1} \in \mathcal{B}$.
\end{proof}
\section{Automatic Continuity}
The notion of strictly $\mathcal{I}$\=/convergent sequences introduced in Definition~\ref{definition:convergence},
as well as its equivalent description given by Corollary~\ref{corollary:convergence},
clearly makes use of the underlying space $X$. However, in some important special cases,
there is a third characterization of such sequences as the ``relatively uniformly
convergent'' ones. This notion of convergence is completely order-theoretic and well-known
in the theory of ordered vector spaces.

Recall that a function $p \in \Stetig(X)$
is \neu{proper} if the preimage $p^{-1}\big([a,b]\big)$ is compact for all $a,b\in \RR$ with $a\le b$.
There is a generalization of Dini's Theorem:
\begin{lemma} \label{lemma:Dini2}
  Let $\mathcal{A}$ be a unital subalgebra of $\Stetig(X)$ that contains a proper function $p \in \mathcal{A}^+$.
  If $(f_k)_{k\in \NN}$ is a decreasing sequence in $\mathcal{A}$ with pointwise infimum $0$,
  then there exists a function $h \in \mathcal{A}^+$ such that for all $\epsilon \in {]0,\infty[}$ 
  there is a $k\in \NN$ fulfilling $f_k \le \epsilon h$.
\end{lemma}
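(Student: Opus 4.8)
The plan is to use the proper function $p$ to exhaust $X$ by the compact sets $K_n \coloneqq p^{-1}\big([0,n]\big)$, apply the classical Dini theorem on each $K_n$ to control the tails of $(f_k)_{k\in \NN}$ there, and then patch these local controls together into a single function $h\in\mathcal{A}^+$ that grows fast enough outside each $K_n$ to dominate $f_k$ everywhere once $k$ is large. First I would observe that, since $p\in\mathcal{A}^+$ is proper, the sets $K_n$ are compact, increasing, and cover $X$; moreover $X\backslash p^{-1}\big([0,n[\big)$ is contained in $p^{-1}\big([n,\infty[\big)$, so on that set $p\ge n\Unit$. Since $(f_k)_{k\in\NN}$ is decreasing with pointwise infimum $0$, Dini's theorem applied to the restrictions to each compact $K_{n+1}$ yields, for each $n\in\NN$, an index $k_n\in\NN$ with $f_{k_n}(x)\le 1$ for all $x\in K_{n+1}$; we may take $(k_n)_{n\in\NN}$ increasing. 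Also set $c_1 \coloneqq \sup_{x\in K_1} f_1(x) < \infty$.

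The key step is the construction of $h$. The idea is that $f_{k_n}$ is bounded by $1$ on $K_{n+1}$ and bounded by $f_1$ (hence crudely by something polynomial in $p$, once we are willing to be generous) on all of $X$; meanwhile on $X\backslash K_n$ we have $p\ge n\Unit$. So I would try $h \coloneqq \Unit + p + \sum$ of suitably weighted powers of $p$ — concretely, something like $h \coloneqq \Unit + \sum_{n\in\NN} a_n\, \chi_n$ is not available since $\mathcal{A}$ need not contain indicator-type functions, so instead I would look for a single element of the form $h \coloneqq q(p)$ with $q$ a polynomial or a convergent power series with nonnegative coefficients, chosen so that $q(t)\to\infty$ fast enough. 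Since each $f_k$ is bounded on each compact $K_n$ but we have no global bound on $f_k$ a priori, the honest route is: for each $n$, let $b_n \coloneqq \sup_{x\in K_{n+1}} f_1(x)<\infty$ (finite by compactness and continuity of $f_1$), and note $f_k\le f_1$ for all $k$. Then choosing $h$ with $h(x)\ge b_n$ whenever $p(x)\ge n-1$, e.g. by picking a polynomial $q$ with $q(t)\ge b_{\lceil t\rceil +1}$ — wait, $b_n$ need not grow polynomially. This is the main obstacle, and it forces $h$ to be built from $f_1$ itself rather than from $p$ alone.

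So the refined construction, which I expect to be the crux: take $h \coloneqq \Unit + f_1 \in \mathcal{A}^+$, which dominates every $f_k$ globally up to the constant $1$, and then observe we only need to beat the factor $\epsilon$ on the region where $f_k$ is still large. Given $\epsilon\in{]0,\infty[}$, choose $n$ with $1/n \le \epsilon/2$ — no, the cleanest version is: given $\epsilon>0$, use Dini on $K_N$ (for the $N$ we are about to fix) to get $k$ with $f_k\le \epsilon\,\Unit$ on $K_N$, hence $f_k\le\epsilon h$ there since $h\ge\Unit$; and choose $N$ large enough, using properness of $p$ and $f_k\le f_1\le h-\Unit\le h$, so that on $X\backslash K_N$ the inequality $f_k\le f_1\le h$ can be upgraded to $f_k\le\epsilon h$ — but this last upgrade fails unless $h$ is much larger than $f_1$ on $X\backslash K_N$. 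The resolution is to take $h \coloneqq \Unit + f_1 + p\cdot f_1$ or, most robustly, $h\coloneqq \Unit + (\Unit+p)f_1$, so that on $X\backslash K_N$ we have $h\ge (1+N)f_1 \ge (1+N)f_k$, which is $\le\epsilon h$ once $N\ge 1/\epsilon$; and on $K_N$ we use Dini to find $k$ with $f_k\le\epsilon\Unit\le\epsilon h$. Combining the two regions gives a single $k$ with $f_k\le\epsilon h$ on all of $X$, and since $h\in\mathcal{A}^+$ (as $\mathcal{A}$ is a unital subalgebra containing $p$ and $f_1$), the lemma follows. I would present the argument in this order: (i) reduce to the exhaustion $K_N=p^{-1}\big([0,N]\big)$; (ii) define $h\coloneqq\Unit+(\Unit+p)f_1\in\mathcal{A}^+$; (iii) fix $\epsilon>0$, pick $N\ge 1/\epsilon$, use Dini on $K_N$ to get $k$ with $f_k\le\epsilon\Unit$ on $K_N$; (iv) check $f_k\le\epsilon h$ separately on $K_N$ and on $X\backslash K_N$, the latter using $p\ge N\Unit$ there and $f_k\le f_1$.
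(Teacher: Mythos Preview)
Your final plan (i)--(iv) is correct and is essentially the paper's proof: split $X$ into a compact level set of $p$ and its complement, use Dini on the compact part and the growth of $p$ on the complement, with $h$ built from $\Unit$, $p$, and $f_1$. The paper chooses the slightly leaner $h \coloneqq \Unit + p\,f_1$ and takes $K \coloneqq p^{-1}\big([0,1/\epsilon]\big)$ directly rather than via an integer index $N$, but your $h = \Unit + (\Unit+p)f_1$ works for the same reason and the argument is otherwise identical.
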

\begin{proof}
  One can choose $h\coloneqq \Unit + p f_1\in \mathcal{A}^+$:
  Given $\epsilon \in {]0,\infty[}$, construct the compact $K\coloneqq p^{-1}\big( [0,1/\epsilon] \big)$.
  By Dini's theorem, there exists a $k\in \NN$ such that $f_k(x) \le \epsilon$ holds
  for all $x\in K$, thus also $f_k(x) \le \epsilon h(x)$ for all $x\in K$ as $h \ge \Unit$.
  If $x \in X\backslash K$, however, then $p(x) > 1/\epsilon$ and thus 
  $f_k(x) \le f_1(x) \le \epsilon h(x)$ holds as well.
\end{proof}
\begin{proposition}
  Let $\mathcal{I}$ be a Riesz ideal and unital subalgebra of $\Stetig(X)$ that contains
  a proper function $p\in \mathcal{I}^+$. Then for every
  sequence $(g_n)_{n\in \NN}$ in $\mathcal{I}$ and all $\hat{g} \in \mathcal{I}$,
  the following is equivalent:
  \begin{enumerate}
    \item The sequence $(g_n)_{n\in \NN}$ is strictly $\mathcal{I}$\=/convergent 
    with strict $\mathcal{I}$\=/limit $\hat{g}$.
    \item There is a function $h\in\mathcal{I}^+$ with the following property:
    For every $\epsilon \in {]0,\infty[}$ there exists an $N\in \NN$ such that
    $\abs{\hat{g}-g_n} \le \epsilon h$ for all $n\in \NN$ with $n\ge N$.
  \end{enumerate}
\end{proposition}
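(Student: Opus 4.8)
The plan is to prove the two implications separately, using Corollary~\ref{corollary:convergence} as the bridge between strict $\mathcal{I}$\=/convergence and uniform convergence on compact subsets plus boundedness.

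For the implication from \refitem{enumii} to \refitem{enumi}: assume there is an $h\in\mathcal{I}^+$ such that for every $\epsilon$ there is an $N$ with $\abs{\hat g - g_n}\le \epsilon h$ for all $n\ge N$. Taking $\epsilon = 1$ gives $\abs{\hat g - g_n}\le h$ eventually, and together with the finitely many initial terms this yields a bound $\abs{g_n}\le b$ for all $n\in\NN$ with some $b\in\mathcal{I}^+$ (namely $b\coloneqq h + \abs{\hat g} + \bigvee_{n=1}^{N}\abs{g_n-\hat g} + \abs{\hat g}$, or more simply $b = \abs{\hat g}+h+\bigvee_{n\le N}\abs{g_n}$). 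For uniform convergence on a compact $K\subseteq X$: since $h$ is continuous it is bounded on $K$, say $h\le M\Unit$ on $K$, so $\abs{\hat g - g_n}\le \epsilon M$ on $K$ for $n\ge N$; letting $\epsilon\to 0$ shows $(g_n)$ converges uniformly on $K$ to $\hat g$. Then Corollary~\ref{corollary:convergence} gives strict $\mathcal{I}$\=/convergence. This direction does not even use the proper function $p$.

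For the implication from \refitem{enumi} to \refitem{enumii}: this is where Lemma~\ref{lemma:Dini2} and the proper function $p$ are needed. By definition of strict $\mathcal{I}$\=/convergence there is a decreasing sequence $(f_k)_{k\in\NN}$ in $\mathcal{I}$ with pointwise infimum $0$ such that for each $k$ there is an $N_k$ with $\abs{\hat g - g_n}\le f_k$ for all $n\ge N_k$. Apply Lemma~\ref{lemma:Dini2} to the unital subalgebra $\mathcal{I}$ (which contains the proper function $p\in\mathcal{I}^+$) and the decreasing sequence $(f_k)$: this produces a single $h\in\mathcal{I}^+$ such that for every $\epsilon\in{]0,\infty[}$ there is a $k\in\NN$ with $f_k\le \epsilon h$. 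Now given $\epsilon$, pick such a $k$, then set $N\coloneqq N_k$; for all $n\ge N$ we get $\abs{\hat g - g_n}\le f_k\le \epsilon h$, which is exactly \refitem{enumii}.

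The main obstacle, and really the only substantive point, is recognizing that Lemma~\ref{lemma:Dini2} is precisely designed to convert the ``variable'' decreasing control sequence $(f_k)$ into a single fixed dominating function $h$ scaled by arbitrarily small $\epsilon$ — i.e. to pass from $\sigma$\=/order-type convergence to relatively uniform convergence — and this is exactly where the hypothesis that $\mathcal{I}$ is a unital subalgebra containing a proper function is used. Everything else is bookkeeping with the Riesz ideal property (to ensure the various combinations stay in $\mathcal{I}$) and with Dini's theorem already packaged inside Lemma~\ref{lemma:Dini2} and Corollary~\ref{corollary:convergence}. One should also note that \refitem{enumii} trivially forces $\hat g$ to be the pointwise limit of $(g_n)$, consistent with the uniqueness of strict $\mathcal{I}$\=/limits, so there is no compatibility issue between the two characterizations.
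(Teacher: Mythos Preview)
Your proof is correct. For the implication \refitem{enumi} $\Rightarrow$ \refitem{enumii} you proceed exactly as the paper does, invoking Lemma~\ref{lemma:Dini2} to replace the decreasing control sequence $(f_k)$ by a single dominating $h$.

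For \refitem{enumii} $\Rightarrow$ \refitem{enumi}, however, you take an unnecessary detour through Corollary~\ref{corollary:convergence}: you first extract a uniform bound $b\in\mathcal{I}^+$, then verify uniform convergence on every compact subset, and finally appeal to the corollary. The paper simply declares this direction ``clear'', because it is immediate from Definition~\ref{definition:convergence}: given $h$ as in \refitem{enumii}, set $f_k \coloneqq h/k$. This is a decreasing sequence in $\mathcal{I}^+$ with pointwise infimum $0$, and for each $k$ the choice $\epsilon = 1/k$ in \refitem{enumii} yields the required $N$. Your route is valid, but it obscures the fact that relative uniform convergence is \emph{a priori} a special case of strict $\mathcal{I}$\=/convergence (with a particularly simple witnessing sequence), without any need for compactness arguments or the Riesz-ideal property.
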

\begin{proof}
  It is clear that the second point implies the first one.
  Conversely, assume that there
  exists a decreasing sequence $(f_k)_{k\in \NN}$ with pointwise infimum $0$ and such
  that for all $k\in \NN$ there is an $N\in \NN$ for which 
  $\abs{\hat{g} - g_n} \le f_k$ is fulfilled for all $n\in \NN$ with $n\ge N$.
  Then by the previous Lemma~\ref{lemma:Dini2}, there also exists a function $h\in\mathcal{I}^+$ 
  such that for all $\epsilon \in {]0,\infty[}$ there exists a $k\in \NN$
  fulfilling $f_k \le \epsilon h$. This function thus has the property
  required in the second point.
\end{proof}
Applying this characterization of strictly $\mathcal{I}$\=/convergent
sequences (hence of $\mathcal{I}$\=/closed sets) to Theorem~\ref{theorem:convergeAlg}
yields essentially the Stone-Weierstraß-type theorem from
\cite[Thm.~4.1]{jurzak:dominatedConvergenceAndStoneWeierstrass}.
With respect to positive linear maps one can even show the following:
\begin{theorem} \label{theorem:autocont}
  Let $\mathcal{I}$ be a Riesz ideal and unital subalgebra of $\Stetig(X)$ that contains
  a proper function $p\in \mathcal{I}^+$. Then every positive linear map
  $\Phi \colon \mathcal{I} \to V$ into an Archimedean ordered vector space $V$
  is strictly $\mathcal{I}$\=/continuous.
\end{theorem}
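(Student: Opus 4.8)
The plan is to show that for any decreasing sequence $(f_k)_{k\in\NN}$ in $\mathcal{I}$ with pointwise infimum $0$, the decreasing sequence $\big(\Phi(f_k)\big)_{k\in\NN}$ in $V$ has infimum $0$. Since $V$ is Archimedean, it suffices to fix a vector $v\in V$ with $v \le \Phi(f_k)$ for all $k\in\NN$ and show that $v \le \epsilon\,\Phi(\Unit)$ for every $\epsilon\in{]0,\infty[}$; then $v\le 0$ by the Archimedean property (using that $\Phi(\Unit)\in V^+$ since $\Unit = \Unit^2$ and $\Phi$ is positive, as in the proof of Corollary~\ref{corollary:coincidenceAlg}), which gives $\inf_{k\in\NN}\Phi(f_k)=0$ because $0$ is trivially a lower bound.

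The key step is to invoke Lemma~\ref{lemma:Dini2} with $\mathcal{A}\coloneqq\mathcal{I}$, which is a unital subalgebra of $\Stetig(X)$ containing the proper function $p\in\mathcal{I}^+$. This produces a single function $h\in\mathcal{I}^+$ such that for every $\epsilon\in{]0,\infty[}$ there is a $k\in\NN$ with $f_k \le \epsilon h$. Now I would exploit that $h$ itself is dominated by a multiple of $\Unit$ after a harmless modification: more precisely, the issue is that $h$ need not be uniformly bounded, so $\epsilon h$ is not directly comparable to $\epsilon\Phi(\Unit)$. To fix this, note that $\Unit+h\ge\Unit$, so $(\Unit+h)^{-1}\in\Stetig_b(X)\subseteq\mathcal{I}$ exists and is uniformly bounded; but multiplying the estimate $f_k\le\epsilon h$ does not immediately help either. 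Instead the cleaner route is: given $\epsilon\in{]0,\infty[}$, first pick $k_0$ with $f_{k_0}\le h$, then observe $f_k\le f_{k_0}\wedge(\epsilon h)$ for $k$ large; better yet, rescale from the start. So I would actually apply Lemma~\ref{lemma:Dini2} to get $h$, fix $\epsilon>0$, choose $k$ with $f_k\le\epsilon h$, and then bound $v\le\Phi(f_k)\le\epsilon\,\Phi(h)$. This gives $v\le\epsilon\,\Phi(h)$ for all $\epsilon>0$ with the \emph{fixed} vector $\Phi(h)\in V^+$, so the Archimedean property of $V$ applied with $w\coloneqq\Phi(h)$ yields $v\le 0$ directly — no need to pass through $\Phi(\Unit)$ at all.

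The main obstacle I anticipate is verifying that $\Phi(h)\in V^+$, i.e. that $\Phi$ is positive on the relevant elements, and that the chain $v\le\Phi(f_k)\le\epsilon\Phi(h)$ is legitimate: the first inequality is the hypothesis on $v$ (using $f_k$ decreasing so $f_k\le f_{k_0}$ is not needed — we just need $v\le\Phi(f_k)$ for \emph{every} $k$, which we have); the second is positivity of $\Phi$ applied to $\epsilon h - f_k\in\mathcal{I}^+$. Both are routine. Thus the whole argument reduces to: Lemma~\ref{lemma:Dini2} furnishes a single dominating $h$, and then Archimedeanness of $V$ does the rest. I would write this up in a few lines.

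\begin{proof}
  Let $(f_k)_{k\in \NN}$ be a decreasing sequence in $\mathcal{I}$ with pointwise infimum $0$.
  As $\mathcal{I}$ is a unital subalgebra of $\Stetig(X)$ containing the proper function $p\in \mathcal{I}^+$,
  Lemma~\ref{lemma:Dini2} provides a function $h\in \mathcal{I}^+$ such that for all $\epsilon \in {]0,\infty[}$
  there is a $k\in \NN$ with $f_k \le \epsilon h$.
  Since $\Phi$ is positive, $\Phi(h) \in V^+$, and $\big(\Phi(f_k)\big)_{k\in \NN}$ is a decreasing sequence in $V$
  which is bounded from below by $0$. Let $v\in V$ be any lower bound of this sequence, i.e. $v \le \Phi(f_k)$ for all $k\in \NN$.
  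Given $\epsilon \in {]0,\infty[}$, choose $k\in \NN$ with $f_k \le \epsilon h$; then $\epsilon h - f_k \in \mathcal{I}^+$,
  so positivity of $\Phi$ yields $\Phi(f_k) \le \epsilon\, \Phi(h)$ and hence $v \le \epsilon\, \Phi(h)$.
  As this holds for all $\epsilon \in {]0,\infty[}$ and $V$ is Archimedean, it follows that $v \le 0$.
  Therefore $0$ is the infimum of $\big(\Phi(f_k)\big)_{k\in \NN}$ in $V$, which means that $\Phi$ is strictly $\mathcal{I}$\=/continuous.
\end{proof}
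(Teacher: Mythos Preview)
Your proof is correct and follows essentially the same approach as the paper: invoke Lemma~\ref{lemma:Dini2} to obtain a single $h\in\mathcal{I}^+$ with $f_k\le\epsilon h$ for suitable $k$, deduce $v\le\epsilon\,\Phi(h)$ for every lower bound $v$ of $\big(\Phi(f_k)\big)_{k\in\NN}$, and conclude $v\le 0$ by the Archimedean property. The detour in your plan via $\Phi(\Unit)$ is indeed unnecessary, and your final write-up matches the paper's argument almost verbatim.
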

\begin{proof}
  Given an Archimedean ordered vector space $V$, a positive linear map $\Phi$ and a
  decreasing sequence $(f_k)_{k\in\NN}$ in $\mathcal{I}$ with pointwise infimum $0$, 
  then $0$ is of course a lower bound in $V$ of all $\Phi(f_k)$ with $k\in \NN$, and 
  it is even the greatest lower bound: 
  By Lemma~\ref{lemma:Dini2}, there exists an $h\in \mathcal{I}^+$ such that 
  for all $\epsilon \in {]0,\infty[}$ there is a $k\in \NN$ fulfilling $f_k \le \epsilon h$,
  hence $\Phi(f_k) \le \epsilon \Phi(h)$. So if $v\in V$ is a lower bound of all $\Phi(f_k)$
  with $k\in \NN$, then also $v\le \epsilon \Phi(h)$ for all $\epsilon \in {]0,\infty[}$
  and even $v\le 0$ because $V$ is Archimedean.
\end{proof}
It is clear that this allows to drop the continuity assumption in Proposition~\ref{proposition:coincidence}
and its Corollaries~\ref{corollary:coincidenceRiesz} and \ref{corollary:coincidenceAlg}
under certain circumstances. Note also the application to Daniell integrals, see Example~\ref{example:integrals}.
\section{The Moment Problem}
With respect to the determinacy of moment problems one would like to understand whether two positive
linear functions from a Riesz ideal and unital subalgebra $\mathcal{I}$ of $\Stetig(X)$ to $\RR$, 
that are strictly $\mathcal{I}$\=/continuous and coincide on a point-separating 
unital subalgebra $\mathcal{A}$ of $\mathcal{I}$, coincide on whole $\mathcal{I}$.
By Proposition~\ref{proposition:coincidence}, this is the case if the two 
functions coincide even on a point-separating unital subalgebra $\mathcal{B}$ of $\Stetig_b(X)$.
In order to guarantee this, an additional assumption of (essential) self-adjointness 
will be necessary.

Before diving into the details, note that it is well-known that there exist
indeterminate moment sequences. For example, let $X = \RR$ and let $\mathcal{I}$ be
the Riesz ideal and unital subalgebra of $\Stetig(\RR)$ given by all polynomially
bounded continuous functions, i.e. the set of all $f\in \Stetig(\RR)$ for which 
there exists a polynomial function $p \in \Stetig(\RR)$ such that $\abs{f} \le p$.
Moreover, let $\mathcal{A}$ be the point-separating unital subalgebra of $\mathcal{I}$
of all polynomial functions. Then there exist two positive Borel measures 
$\mu, \mu'$ on $\RR$ for which $\int_X p \D\mu = \int_X p \D\mu'$ holds for all
$p\in\mathcal{A}$, but not $\int_X f \D\mu = \int_X f \D\mu'$ for all $f\in\mathcal{I}$,
see e.g. the classical example by Stieltjes, \cite[Expl.~4.22]{schmuedgen:TheMomentProblem}.
So the $\mathcal{I}$\=/closure of $\mathcal{A}$ cannot be whole $\mathcal{I}$.
This shows that the assumption in Theorem~\ref{theorem:convergeAlg} that
$\mathcal{B}$ consists of \emph{uniformly bounded} functions is crucial.
\subsection{Intermezzo: \texorpdfstring{$^*$\=/Algebras}{*-Algebras}} \label{sec:staralg}
Before we are able to apply the previous results, some general considerations 
will be necessary:

In the following, let $\mathcal{I}_\CC$ be a
\neu{unital $^*$-algebra}, i.e. a unital associative (but not necessarily commutative) complex 
algebra with an antilinear involution $\argument^*\colon \mathcal{I}_\CC\to \mathcal{I}_\CC$ 
that fulfils $(fg)^* = g^*f^*$ for all $f,g\in\mathcal{I}_\CC$. For example, $\CC$ with the
complex conjugation $\cc{\argument}$, or the algebra of complex-valued continuous functions
on $X$ with the pointwise complex conjugation as $^*$\=/involution are (commutative) unital
$^*$\=/algebras. The unit of $\mathcal{I}_\CC$ will again be denoted by $\Unit$,
and one can check that necessarily $\Unit^* = \Unit$ holds.
An element $f\in\mathcal{I}_\CC$ that fulfils $f^*=f$ is called \neu{Hermitian}.
Moreover, let $\Phi_\CC\colon \mathcal{I}_\CC \to \CC$
be a \neu{positive Hermitian linear map}, i.e. $\Phi_\CC(f^*) = \cc{\Phi_\CC(f)}$ and 
$\Phi_\CC(f^*f) \ge 0$ for all $f\in\mathcal{I}_\CC$. Then 
$\skal{\argument}{\argument}_\Phi \colon \mathcal{I}_\CC\times \mathcal{I}_\CC \to \CC$,
\begin{equation*}
  (f,g) \mapsto \skal{f}{g}_\Phi \coloneqq \Phi_\CC(f^* g)
\end{equation*}
is a positive Hermitian sesquilinear form on $\mathcal{I}_\CC$, i.e. $\cc{\skal{f}{g}}_\Phi = \skal{g}{f}_\Phi$
and $\skal{f}{f}_\Phi\ge 0$
hold for all $f,g\in\mathcal{I}_\CC$, and 
$\seminorm{\Phi}{\argument} \colon \mathcal{I}_\CC \to {[0,\infty[}$,
\begin{equation*}
  f \mapsto \seminorm{\Phi}{f} \coloneqq \skal{f}{f}^{1/2}_\Phi
\end{equation*}
is the corresponding Hilbert seminorm. Note that $\skal{f}{gh}_\Phi = \skal{g^*f}{h}_\Phi$
for all $f,g,h\in\mathcal{I}_\CC$.
Even though $\skal{\argument}{\argument}_\Phi$ 
is not an inner product in general, i.e. $\skal{f}{f}_\Phi=0$ with $f\in\mathcal{I}_\CC$
does not necessarily imply $f=0$, and even though the topology
defined by the seminorm $\seminorm{\Phi}{\argument}$ is not Hausdorff in general,
notions of closure, continuity, Cauchy and convergent sequences with respect to $\seminorm{\Phi}{\argument}$
are well-defined on $\mathcal{I}_\CC$.
However, limits of $\seminorm{\Phi}{\argument}$\=/convergent sequences are not necessarily uniquely determined.
The \neu{Cauchy Schwarz inequality}
\begin{equation*}
  \abs[\big]{\skal{f}{g}_\Phi} \le \seminorm{\Phi}{f}\seminorm{\Phi}{g}
\end{equation*}
also holds for all $f,g\in \mathcal{I}_\CC$. While the left multiplication on 
$\mathcal{I}_\CC$ with an element of $\mathcal{I}_\CC$ is not 
$\seminorm{\Phi}{\argument}$\=/continuous in general,
there is an important special case: If $f = f^*$ and $f+\I\lambda \Unit$ with 
$\lambda \in \{-1,+1\}$ is invertible in $\mathcal{I}_\CC$, then the left
multiplication with $(f+\I\lambda\Unit)^{-1}$ is indeed $\seminorm{\Phi}{\argument}$\=/continuous.
More precisely, the estimate
\begin{equation*}
  \seminorm[\big]{\Phi}{(f+\I\lambda\Unit)^{-1} g} 
  \le
  \Phi_\CC\Big( g^* (f-\I\lambda\Unit)^{-1} \big(f^2+\Unit\big) (f+\I\lambda\Unit)^{-1} g \Big)^\frac{1}{2}
  =
  \Phi_\CC (g^* g)^\frac{1}{2}
  =
  \seminorm{\Phi}{g}
\end{equation*}
holds for all $g\in\mathcal{I}_\CC$. Nevertheless, the left multiplication with
arbitrary elements of $\mathcal{I}_\CC$ still fulfils a weaker condition:
\begin{lemma} \label{lemma:closable}
  Let $\mathcal{I}_\CC$ be a unital $^*$\=/algebra, $f\in \mathcal{I}_\CC$ and $\Phi_\CC\colon \mathcal{I}_\CC \to \CC$
  a positive Hermitian linear map.
  If a sequence $(g_n)_{n\in \NN}$ in $\mathcal{I}_\CC$ is $\seminorm{\Phi}{\argument}$\=/convergent
  against some $\hat{g} \in \mathcal{I}_\CC$ and such that the sequence $(fg_n)_{n\in\NN}$
  is a $\seminorm{\Phi}{\argument}$\=/Cauchy sequence, then $(fg_n)_{n\in \NN}$ is 
  also $\seminorm{\Phi}{\argument}$\=/convergent against $f\hat{g}$.
\end{lemma}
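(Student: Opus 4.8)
The plan is to exploit the fact that the Hilbert seminorm $\seminorm{\Phi}{\argument}$ comes from the positive Hermitian sesquilinear form $\skal{\argument}{\argument}_\Phi$, so that $\seminorm{\Phi}{\argument}$-convergence and $\seminorm{\Phi}{\argument}$-Cauchyness translate into statements that can be tested by pairing against fixed elements via the Cauchy--Schwarz inequality. Since $(fg_n)_{n\in\NN}$ is a $\seminorm{\Phi}{\argument}$-Cauchy sequence, and limits are not unique, I cannot simply ``name its limit''; instead I must show that $f\hat g$ is \emph{a} limit, i.e. that $\seminorm{\Phi}{f\hat g - fg_n} \to 0$. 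The natural route is to pass to a subsequence along which the Cauchy sequence $(fg_n)$ converges to some $\hat h\in\mathcal I_\CC$ (using only that it is Cauchy — but note $\mathcal I_\CC$ need not be complete, so I cannot assume such an $\hat h$ exists a priori). So the real device must avoid invoking completeness.

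Concretely, first I would fix an arbitrary $u\in\mathcal I_\CC$ and consider the scalars $\skal{u}{fg_n}_\Phi = \Phi_\CC(u^* f g_n) = \skal{f^* u}{g_n}_\Phi$. Because $(g_n)_{n\in\NN}$ is $\seminorm{\Phi}{\argument}$-convergent against $\hat g$, the Cauchy--Schwarz inequality gives $\abs{\skal{f^*u}{g_n}_\Phi - \skal{f^*u}{\hat g}_\Phi} \le \seminorm{\Phi}{f^*u}\,\seminorm{\Phi}{g_n - \hat g} \to 0$, so $\skal{u}{fg_n}_\Phi \to \skal{u}{f\hat g}_\Phi$ for every $u\in\mathcal I_\CC$. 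Next, since $(fg_n)_{n\in\NN}$ is a $\seminorm{\Phi}{\argument}$-Cauchy sequence, the real sequence $\big(\seminorm{\Phi}{fg_n}\big)_{n\in\NN}$ is Cauchy in $\RR$, hence bounded and convergent; call its limit $L\ge 0$. Now I would compute, for fixed $n$ and $m$,
\begin{equation*}
  \seminorm{\Phi}{f\hat g - fg_n}^2
  = \seminorm{\Phi}{f\hat g}^2 - 2\,\mathrm{Re}\,\skal{fg_n}{f\hat g}_\Phi + \seminorm{\Phi}{fg_n}^2,
\end{equation*}
and separately $\seminorm{\Phi}{fg_m - fg_n}^2 = \seminorm{\Phi}{fg_m}^2 - 2\,\mathrm{Re}\,\skal{fg_n}{fg_m}_\Phi + \seminorm{\Phi}{fg_n}^2$. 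Applying the weak-convergence statement from the first step with $u = g_n$ (so that $\skal{fg_n}{fg_m}_\Phi = \skal{g_n}{f^* f g_m}_\Phi \to \skal{g_n}{f^* f \hat g}_\Phi = \skal{fg_n}{f\hat g}_\Phi$ as $m\to\infty$), and letting $\seminorm{\Phi}{fg_m}^2 \to L^2$, I get $\seminorm{\Phi}{fg_m - fg_n}^2 \to \seminorm{\Phi}{f\hat g - fg_n}^2 + \big(L^2 - \seminorm{\Phi}{f\hat g}^2\big)$ as $m\to\infty$; in particular this shows $L^2 \ge \seminorm{\Phi}{f\hat g}^2$ by taking also $n\to\infty$ inside the Cauchy estimate, and more usefully it yields
\begin{equation*}
  \seminorm{\Phi}{f\hat g - fg_n}^2 = \lim_{m\to\infty}\seminorm{\Phi}{fg_m - fg_n}^2 - \big(L^2 - \seminorm{\Phi}{f\hat g}^2\big).
\end{equation*}

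To finish, I would pin down the constant $L^2 - \seminorm{\Phi}{f\hat g}^2$. Taking $\limsup_{n\to\infty}$ in the displayed identity and using that $(fg_n)$ is $\seminorm{\Phi}{\argument}$-Cauchy (so the double limit $\lim_{n}\lim_{m}\seminorm{\Phi}{fg_m-fg_n}^2 = 0$), the left side's $\limsup$ is nonnegative while the right side tends to $-(L^2 - \seminorm{\Phi}{f\hat g}^2) \le 0$; hence $L^2 = \seminorm{\Phi}{f\hat g}^2$ and the correction term vanishes. Plugging this back gives $\seminorm{\Phi}{f\hat g - fg_n}^2 = \lim_{m\to\infty}\seminorm{\Phi}{fg_m - fg_n}^2 \to 0$ as $n\to\infty$ by the Cauchy property, which is exactly $\seminorm{\Phi}{\argument}$-convergence of $(fg_n)_{n\in\NN}$ against $f\hat g$. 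The main obstacle is precisely the non-uniqueness of limits and the possible incompleteness of $\mathcal I_\CC$: one cannot assert ``$(fg_n)$ has a limit $\hat h$'' and then argue $\hat h = f\hat g$; the argument must instead verify directly that the candidate $f\hat g$ works, and the only leverage available is testing against fixed vectors by Cauchy--Schwarz, which is why the ``weak convergence'' step and the careful bookkeeping of the norm-squares identity are the crux.
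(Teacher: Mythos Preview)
Your argument is correct but follows a genuinely different route from the paper. The paper inserts an auxiliary index $N$ and writes
\begin{equation*}
  \seminorm{\Phi}{fg_n-f\hat{g}}^2
  = \skal{fg_n-fg_N}{fg_n-f\hat{g}}_\Phi + \skal{f^*fg_N-f^*f\hat{g}}{g_n-\hat{g}}_\Phi,
\end{equation*}
applies Cauchy--Schwarz to each summand, and solves the resulting quadratic inequality $3x^2 \le 2\epsilon x + \epsilon^2$ in $x=\seminorm{\Phi}{fg_n-f\hat{g}}$ directly. Your approach is instead the Hilbert-space template ``weak convergence plus convergence of norms implies strong convergence'': you establish $\skal{u}{fg_m}_\Phi \to \skal{u}{f\hat g}_\Phi$ for every fixed $u$ by shifting $f^*$ across the form, use the Cauchy property to extract a limit $L$ of the norms, and compare the expansions of $\seminorm{\Phi}{f\hat g-fg_n}^2$ and $\lim_m\seminorm{\Phi}{fg_m-fg_n}^2$ to force $L=\seminorm{\Phi}{f\hat g}$. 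The paper's decomposition is shorter and avoids tracking the constant $L^2-\seminorm{\Phi}{f\hat g}^2$; your version is more conceptual and makes transparent why this is really a closability statement, while carefully avoiding the illegitimate step of naming a limit in a possibly incomplete space. One small correction: the inequality $L^2\ge\seminorm{\Phi}{f\hat g}^2$ you invoke does not follow ``by taking $n\to\infty$ inside the Cauchy estimate'' (that direction of the identity actually yields $L^2\le\seminorm{\Phi}{f\hat g}^2$); it is simply weak lower semicontinuity, namely $\seminorm{\Phi}{f\hat g}^2=\lim_n\skal{f\hat g}{fg_n}_\Phi\le \seminorm{\Phi}{f\hat g}\,L$, which you have all the ingredients for.
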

\begin{proof}
  This is a consequence of the estimate
  \begin{align*}
    \seminorm{\Phi}{fg_n-f\hat{g}}^2 
    &=
    \skal{fg_n-fg_N}{fg_n-f\hat{g}}_\Phi + \skal{f^*fg_N-f^*f \hat{g}}{g_n-\hat{g}}_\Phi
    \\
    &\le
    \seminorm{\Phi}{fg_n-fg_N} \seminorm{\Phi}{fg_n-f\hat{g}} + \seminorm{\Phi}{f^*f g_N-f^*f \hat{g}} \seminorm{\Phi}{g_n-\hat{g}}
  \end{align*}
  for $n,N\in \NN$:
  Given $\epsilon \in {]0,\infty[}$. As $(fg_n)_{n\in \NN}$ is a Cauchy sequence,
  there exists an $N\in \NN$ such that $\seminorm{\Phi}{fg_n-fg_N} \le 2\epsilon / 3$ holds 
  for all $n\in \NN$ with $n\ge N$. But as the sequence $(g_n)_{n\in \NN}$ is $\seminorm{\Phi}{\argument}$\=/convergent
  against $\hat{g}$, there also exists an $N' \in \NN$ such that 
  $\seminorm{\Phi}{f^*f g_N-f^*f \hat{g}} \seminorm{\Phi}{g_n-\hat{g}} \le \epsilon^2 / 3 $ holds for 
  the above $N$ and all $n\in \NN$ with $n\ge N'$. Consequently, the quadratic inequality
  $3\seminorm{\Phi}{fg_n-f\hat{g}}^2 \le 2\epsilon \seminorm{\Phi}{fg_n-f\hat{g}} + \epsilon^2$
  holds for all $n\in \NN$ with $n\ge \max\{N,N'\}$, which implies $\seminorm{\Phi}{fg_n-f\hat{g}} \le \epsilon$.
\end{proof}
Next let $\mathcal{A}_\CC$ be a \neu{unital $^*$\=/subalgebra} of $\mathcal{I}_\CC$, 
i.e. a unital (complex) subalgebra such that $a^*\in \mathcal{A}_\CC$ holds for all 
$a\in\mathcal{A}_\CC$. Write $\mathcal{A}^\cl_\CC$ for its $\seminorm{\Phi}{\argument}$\=/closure.
Note that $\mathcal{A}^\cl_\CC$ is the set of all $\seminorm{\Phi}{\argument}$\=/limits of $\seminorm{\Phi}{\argument}$\=/convergent
sequences in $\mathcal{A}_\CC$, because -- contrary to the notions from 
Definition~\ref{definition:convergence} -- the notions of $\seminorm{\Phi}{\argument}$\=/convergent
sequences and $\seminorm{\Phi}{\argument}$\=/closure used here come from a first countable topology on $\mathcal{I}_\CC$.
Similarly, for all $a\in \mathcal{A}_\CC$ with $a=a^*$, the \neu{$a$-$\seminorm{\Phi}{\argument}$-closure} 
$\mathcal{A}^\acl_\CC$ of $\mathcal{A}_\CC$ is defined as the closure
with respect to the seminorm 
$\mathcal{I}_\CC \ni g \mapsto \big( \seminorm{\Phi}{ag}^2 + \seminorm{\Phi}{g}^2 \big)^{1/2} = 
\seminorm{\Phi}{(a+\I\lambda \Unit)g} \in {[0,\infty[}$ with $\lambda \in \{-1,+1\}$.
Then $\mathcal{A}^\acl_\CC$ is the
set of all $\seminorm{\Phi}{\argument}$\=/limits 
$\hat{b} \in \mathcal{I}_\CC$ of $\seminorm{\Phi}{\argument}$\=/convergent sequences 
$(b_n)_{n\in \NN}$ in $\mathcal{A}_\CC$ for which additionally $(ab_n)_{n\in \NN}$ 
is $\seminorm{\Phi}{\argument}$\=/convergent against $a\hat{b}$. 
Clearly $\mathcal{A}^\acl_\CC\subseteq \mathcal{A}^\cl_\CC$.

In the following we will also need some operator-theoretic notions. For these to be
applicable, it is necessary to divide out the kernel of $\seminorm{\Phi}{\argument}$
in order to get a true (pre-)Hilbert space. We will only work with the unital $^*$\=/subalgebra
$\mathcal{A}_\CC$ of $\mathcal{I}_\CC$: 
The \neu{Gel'fand ideal} with respect to $\Phi_\CC$ is defined as
$\mathcal{G}_\Phi \coloneqq \set[\big]{a\in\mathcal{A}_\CC}{\seminorm{\Phi}{a}=0} = 
\set[\big]{a\in\mathcal{A}_\CC}{\forall_{b\in\mathcal{A}_\CC}: \skal{b}{a}_\Phi = 0}$,
and we write $[\argument] \colon \mathcal{A}_\CC \to \mathcal{A}_\CC\big/\mathcal{G}_\Phi$
for the canonical projection onto the quotient. Then $\mathcal{G}_\Phi$
is a left ideal of $\mathcal{A}_\CC$ and thus all left multiplications with
elements $a\in \mathcal{A}_\CC$ descend to well-defined linear endomorphisms
$\mult{a} \colon \mathcal{A}_\CC \big/ \mathcal{G}_\Phi \to \mathcal{A}_\CC \big/ \mathcal{G}_\Phi$,
\begin{equation*}
  [b] 
  \mapsto 
  \mult{a}[b] 
  \coloneqq 
  [ab]\,.
\end{equation*}
The positive sesquilinear form $\skal{\argument}{\argument}_\Phi$ and its induced 
seminorm $\seminorm{\Phi}{\argument}$ (or rather, their restrictions to $\mathcal{A}_\CC$)
also descend to this quotient, where 
$\skal{\argument}{\argument}_\Phi$ is an inner product and $\seminorm{\Phi}{\argument}$ is a norm. 
So $\mathcal{A}_\CC \big/ \mathcal{G}_\Phi$
with this inner product is a pre-Hilbert space and all endomorphisms $\mult{a}$ with $a\in\mathcal{A}_\CC$ 
of $\mathcal{A}_\CC \big/ \mathcal{G}_\Phi$ are adjointable (in the algebraic sense)
with $(\mult{a})^* = \mult{a^*}$. Let $\Hilb_\Phi$ be the completion of 
$\mathcal{A}_\CC \big/ \mathcal{G}_\Phi$ to a Hilbert space. With some 
abuse of notation, we can identify $\mathcal{A}_\CC \big/ \mathcal{G}_\Phi$ 
with a $\seminorm{\Phi}{\argument}$\=/dense linear subspace of $\Hilb_\Phi$ so that the $\mult{a}$ with $a\in\mathcal{A}_\CC$ 
form a $^*$\=/algebra of (not necessarily bounded) operators on $\Hilb_\Phi$. 
This is the well-known GNS-representation of the $^*$\=/algebra $\mathcal{A}_\CC$
with respect to the positive Hermitian linear map $\Phi_\CC$. Note that 
the canonical projection $[\argument] \colon \mathcal{A}_\CC \to \mathcal{A}_\CC\big/\mathcal{G}_\Phi$
is a $\seminorm{\Phi}{\argument}$\=/continuous linear map and thus extends to a $\seminorm{\Phi}{\argument}$\=/continuous
linear map from $\mathcal{A}_\CC^\cl$ to $\Hilb_\Phi$, which will also be denoted by $[\argument]$.
But one has to be careful here: While $\skal{[a]}{[b]}_\Phi = \Phi_\CC(a^*b)$ holds
even for all $a,b\in \mathcal{A}_\CC^\cl$ (and not only for $a,b\in \mathcal{A}_\CC$) 
due to the continuity of both sides as sesquilinear maps from $\mathcal{A}_\CC^\cl \times \mathcal{A}_\CC^\cl$
to $\CC$, the identity $\mult{a}[b] = [ab]$ does not even make any sense for general
$a\in\mathcal{A}_\CC$ and $b\in\mathcal{A}_\CC^\cl$ because the left-hand side is
not defined. As $\mult{a}$ need not be continuous, it is not so easy to
extend $\mult{a}$ in a suitable way.

If $a\in \mathcal{A}_\CC$ is Hermitian, then $\mult{a}$ is a symmetric operator 
on $\Hilb_\Phi$, i.e. $\skal{[b]}{\mult{a}[c]}_\Phi = \skal{\mult{a}[b]}{[c]}_\Phi$
for all $[b],[c] \in \mathcal{A}_\CC \big/ \mathcal{G}_\Phi$. The case that $\mult{a}$
is even \neu{essentially self-adjoint} is especially interesting. By definition,
this means that the domain of the closure of $\mult{a}$ coincides with the domain
of the (operator theoretic) adjoint of $\mult{a}$. While the domain of the
closure of $\mult{a}$ is the set of all limits in $\Hilb_\Phi$ of $\seminorm{\Phi}{\argument}$\=/Cauchy sequences 
$\big( [b_n]\big)_{n\in \NN}$ in $\mathcal{A}_\CC \big/ \mathcal{G}_\Phi$ for which
$\big( \mult{a}[b_n]\big)_{n\in \NN}$ is also a $\seminorm{\Phi}{\argument}$\=/Cauchy sequence, the domain of
the operator-theoretic adjoint of $\mult{a}$ is the set of all $\psi \in \Hilb_\Phi$
for which the linear map $\mathcal{A}_\CC \big/ \mathcal{G}_\Phi \ni [b] \mapsto \skal{\psi}{\mult{a}[b]}_\Phi \in \CC$
is $\seminorm{\Phi}{\argument}$\=/continuous. An alternative characterization of essential self-adjointness
can be given with the help of von Neumann's formulas: The symmetric operator $\mult{a}$ 
is essentially self-adjoint if and only if the image of 
$\mathcal{A}_\CC \big/ \mathcal{G}_\Phi$ under $\mult{a+\I\lambda\Unit}$ is dense 
in $\mathcal{A}_\CC \big/ \mathcal{G}_\Phi$ for both $\lambda \in \{-1,+1\}$.

The above notions and considerations are more or less standard.
For the discussion of the determinacy of moment problems, we need only one special result:
\begin{proposition} \label{proposition:staralg}
  Let $\mathcal{I}_\CC$ be a unital $^*$\=/algebra, $\Phi_\CC\colon \mathcal{I}_\CC \to \CC$
  a positive Hermitian linear map and $\mathcal{A}_\CC$ a unital $^*$\=/subalgebra 
  of $\mathcal{I}_\CC$. Moreover, let $a\in\mathcal{A}_\CC$ with $a=a^*$ be given
  and such that $a+\I\lambda\Unit$ is invertible in $\mathcal{I}_\CC$ for both $\lambda \in \{-1,+1\}$.
  Using the notation introduced above, the following is equivalent:
  \begin{enumerate}
    \item $\mathcal{A}_\CC^{\acl} = \mathcal{A}_\CC^\cl$ and 
    $(a+\I\lambda\Unit)^{-1} b \in \mathcal{A}_\CC^\cl$ for all $b\in\mathcal{A}_\CC^\cl$
    and both $\lambda \in \{-1,+1\}$.
    \label{item:staralg:1}
    \item $(a+\I\lambda\Unit)^{-1} b \in \mathcal{A}_\CC^\acl$ for all $b\in\mathcal{A}_\CC^\acl$
    and both $\lambda \in \{-1,+1\}$.
    \label{item:staralg:2}
    \item The symmetric operator $\mult{a}$ in the GNS-representation of $\mathcal{A}_\CC$
    with respect to $\Phi_\CC$ is essentially self-adjoint.
    \label{item:staralg:3}
  \end{enumerate}
\end{proposition}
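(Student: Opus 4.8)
The plan is to prove the equivalence of the three statements in Proposition~\ref{proposition:staralg} by establishing a cycle of implications, say \refitem{item:staralg:3} $\Rightarrow$ \refitem{item:staralg:1} $\Rightarrow$ \refitem{item:staralg:2} $\Rightarrow$ \refitem{item:staralg:3}, translating back and forth between the algebraic language of closures inside $\mathcal{I}_\CC$ and the operator-theoretic language in the Hilbert space $\Hilb_\Phi$. The key dictionary is: the canonical projection $[\argument]$ maps $\mathcal{A}_\CC^\cl$ onto (a dense subspace of) the domain of $\cc{\mult{a}}$ when restricted to $a$-convergent sequences, and $\mathcal{A}_\CC^\acl$ is precisely the set of $\seminorm{\Phi}{\argument}$-limits $\hat b$ of sequences $(b_n)$ in $\mathcal{A}_\CC$ with $(ab_n)$ also $\seminorm{\Phi}{\argument}$-convergent — and by Lemma~\ref{lemma:closable} the limit of $(ab_n)$ is then forced to be $a\hat b$, so $[a b_n] = \mult{a}[b_n] \to [a\hat b]$, i.e. $[\hat b]$ lies in the domain of the closure $\cc{\mult{a}}$ with $\cc{\mult{a}}[\hat b] = [a\hat b]$.

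First I would prove \refitem{item:staralg:3} $\Rightarrow$ \refitem{item:staralg:1}. Essential self-adjointness gives, via von Neumann's formulas, that $\mult{a\pm\I\Unit}$ has dense range in $\mathcal{A}_\CC\big/\mathcal{G}_\Phi$, equivalently (for a self-adjoint extension) that $(\cc{\mult{a}}+\I\lambda\Unit)^{-1}$ is everywhere defined and bounded on $\Hilb_\Phi$. The crucial observation is that for $b\in\mathcal{A}_\CC$ the algebraic inverse $(a+\I\lambda\Unit)^{-1}b$ already lies in $\mathcal{A}_\CC^\cl$ (indeed in $\mathcal{A}_\CC^\acl$): one approximates it by iterating the resolvent-like expansion, or more directly, using essential self-adjointness one finds a sequence $c_n\in\mathcal{A}_\CC$ with $[c_n]\to(\cc{\mult{a}}+\I\lambda\Unit)^{-1}[b]$ and $\mult{a}[c_n]$ convergent, and then the $\seminorm{\Phi}{\argument}$-bounded left multiplication by $(a+\I\lambda\Unit)^{-1}$ (the explicit estimate displayed just before Lemma~\ref{lemma:closable}) shows $[c_n]\to [(a+\I\lambda\Unit)^{-1}b]$, so $(a+\I\lambda\Unit)^{-1}b\in\mathcal{A}_\CC^\acl$. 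Since left multiplication by $(a+\I\lambda\Unit)^{-1}$ is $\seminorm{\Phi}{\argument}$-continuous, it then maps all of $\mathcal{A}_\CC^\cl$ into $\mathcal{A}_\CC^\cl$. For $\mathcal{A}_\CC^\acl = \mathcal{A}_\CC^\cl$: given $\hat g\in\mathcal{A}_\CC^\cl$, write $\hat g = (a+\I\lambda\Unit)^{-1}\bigl((a+\I\lambda\Unit)\hat g\bigr)$; approximate $(a+\I\lambda\Unit)\hat g$ by $d_n\in\mathcal{A}_\CC$ in $\seminorm{\Phi}{\argument}$, set $g_n := (a+\I\lambda\Unit)^{-1}d_n\in\mathcal{A}_\CC^\cl$, and check that $g_n\to\hat g$ and $ag_n = d_n - \I\lambda g_n$ converges, so $\hat g$ is an $a$-limit — one has to be slightly careful that the $g_n$ can be taken in $\mathcal{A}_\CC$ itself, which follows by a diagonal argument since each $g_n\in\mathcal{A}_\CC^\cl$ is a $\seminorm{\Phi}{\argument}$-limit of elements of $\mathcal{A}_\CC$ and left multiplication by $a$ on those is controlled through the resolvent estimate applied to $d_n$.

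Next, \refitem{item:staralg:1} $\Rightarrow$ \refitem{item:staralg:2} is almost immediate: under \refitem{item:staralg:1} we have $\mathcal{A}_\CC^\acl=\mathcal{A}_\CC^\cl$, so the statement of \refitem{item:staralg:2} is literally \refitem{item:staralg:1}'s second clause with $\mathcal{A}_\CC^\acl$ in place of $\mathcal{A}_\CC^\cl$. For \refitem{item:staralg:2} $\Rightarrow$ \refitem{item:staralg:3}: I would show the range of $\mult{a+\I\lambda\Unit}$ is dense in $\mathcal{A}_\CC\big/\mathcal{G}_\Phi$. Given $[b]$ with $b\in\mathcal{A}_\CC$, by \refitem{item:staralg:2} the element $(a+\I\lambda\Unit)^{-1}b$ lies in $\mathcal{A}_\CC^\acl$, so there is a sequence $(c_n)$ in $\mathcal{A}_\CC$ with $c_n\to(a+\I\lambda\Unit)^{-1}b$ in $\seminorm{\Phi}{\argument}$ and $ac_n$ convergent; by Lemma~\ref{lemma:closable}, $ac_n\to a(a+\I\lambda\Unit)^{-1}b = b - \I\lambda(a+\I\lambda\Unit)^{-1}b$, hence $\mult{a+\I\lambda\Unit}[c_n] = [ac_n + \I\lambda c_n]\to [b]$, showing $[b]$ is in the closure of the range. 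Since such $[b]$ are dense, von Neumann's criterion gives essential self-adjointness of $\mult{a}$.

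The main obstacle I anticipate is the bookkeeping in \refitem{item:staralg:3} $\Rightarrow$ \refitem{item:staralg:1}, specifically the density clause $\mathcal{A}_\CC^\acl=\mathcal{A}_\CC^\cl$: one must produce, for an arbitrary $\seminorm{\Phi}{\argument}$-limit $\hat g$ of $\mathcal{A}_\CC$-elements, an approximating sequence drawn from $\mathcal{A}_\CC$ itself for which \emph{also} the images under left multiplication by $a$ converge — and the images of the naive approximants $(a+\I\lambda\Unit)^{-1}d_n$ need not a priori lie in $\mathcal{A}_\CC$ but only in $\mathcal{A}_\CC^\cl$. The resolution is the boundedness estimate for $\seminorm[\big]{\Phi}{(a+\I\lambda\Unit)^{-1}g}\le\seminorm{\Phi}{g}$ displayed before Lemma~\ref{lemma:closable}, which lets one replace $(a+\I\lambda\Unit)^{-1}d_n$ by genuine elements of $\mathcal{A}_\CC$ within error $1/n$ while keeping control of the $a$-multiples via $a(a+\I\lambda\Unit)^{-1}d_n = d_n - \I\lambda(a+\I\lambda\Unit)^{-1}d_n$; a standard diagonal argument then finishes it. Everything else is routine translation through the GNS dictionary together with repeated use of Lemma~\ref{lemma:closable} and the resolvent estimate.
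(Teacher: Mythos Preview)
Your cycle \refitem{item:staralg:1} $\Rightarrow$ \refitem{item:staralg:2} $\Rightarrow$ \refitem{item:staralg:3} is fine and matches the paper. The issue is in \refitem{item:staralg:3} $\Rightarrow$ \refitem{item:staralg:1}, precisely at the step you flag as the main obstacle, and your proposed resolution does not close the gap.

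To show $\mathcal{A}_\CC^{\acl}=\mathcal{A}_\CC^\cl$ you take an arbitrary $\hat g\in\mathcal{A}_\CC^\cl$ and want to ``approximate $(a+\I\lambda\Unit)\hat g$ by $d_n\in\mathcal{A}_\CC$ in $\seminorm{\Phi}{\argument}$''. But this presupposes $(a+\I\lambda\Unit)\hat g\in\mathcal{A}_\CC^\cl$, which you do not know: left multiplication by $a$ is not $\seminorm{\Phi}{\argument}$\=/continuous, so from $e_n\to\hat g$ with $e_n\in\mathcal{A}_\CC$ you cannot conclude $(a+\I\lambda\Unit)e_n\to(a+\I\lambda\Unit)\hat g$. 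The rest of your diagonal argument is sound \emph{once} you have such $d_n$, but producing them is essentially equivalent to showing $\hat g\in\mathcal{A}_\CC^{\acl}$ in the first place, so the argument is circular. The resolvent estimate only controls multiplication by $(a+\I\lambda\Unit)^{-1}$, not by $(a+\I\lambda\Unit)$.

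The paper avoids this entirely by exploiting the ambient algebra $\mathcal{I}_\CC$ rather than trying to stay inside $\mathcal{A}_\CC^\cl$: for any $c\in\mathcal{A}_\CC^\cl\subseteq\mathcal{I}_\CC$ the product $a^*c$ exists in $\mathcal{I}_\CC$ and $\seminorm{\Phi}{a^*c}<\infty$, so
\[
\abs[\big]{\skal{[c]}{\mult{a}[b]}_\Phi}
=\abs[\big]{\Phi_\CC(c^*ab)}
=\abs[\big]{\skal{a^*c}{b}_\Phi}
\le \seminorm{\Phi}{a^*c}\,\seminorm{\Phi}{b}
\]
for all $b\in\mathcal{A}_\CC$. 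Hence $[c]$ lies in the domain of the operator-theoretic adjoint of $\mult{a}$, and essential self-adjointness then forces $[c]$ into the domain of the closure $\cc{\mult{a}}$. Translating back (and invoking Lemma~\ref{lemma:closable}) gives a sequence $(b_n)$ in $\mathcal{A}_\CC$ with $b_n\to c$ and $ab_n\to ac$, i.e. $c\in\mathcal{A}_\CC^{\acl}$. This one-line estimate is the missing idea in your approach; once you insert it, the remainder of your argument (and in particular your treatment of $(a+\I\lambda\Unit)^{-1}b\in\mathcal{A}_\CC^\cl$ via range density plus the resolvent bound) is correct and coincides with the paper's.
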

\begin{proof}
  \refitem{item:staralg:1} clearly implies \refitem{item:staralg:2}, and 
  \refitem{item:staralg:2} implies \refitem{item:staralg:3} because if 
  \refitem{item:staralg:2} holds, then the image of $\mathcal{A}_\CC \big/ \mathcal{G}_\Phi$
  under $\mult{a+\I\lambda\Unit}$ is dense in $\mathcal{A}_\CC \big/ \mathcal{G}_\Phi$ for both $\lambda \in \{-1,+1\}$:
  Given $c\in\mathcal{A}_\CC$, then $(a+\I\lambda)^{-1}c \in \mathcal{A}_\CC^{\acl}$,
  so there exists a sequence $(b_n)_{n\in \NN}$ in $\mathcal{A}_\CC$ that is 
  $\seminorm{\Phi}{\argument}$\=/convergent against $(a+\I\lambda)^{-1}c$ and 
  for which $(ab_n)_{n\in \NN}$ is $\seminorm{\Phi}{\argument}$\=/convergent against
  $a(a+\I\lambda)^{-1}c$. It follows that $\big((a+\I\lambda\Unit) b_n\big)_{n\in \NN}$
  is $\seminorm{\Phi}{\argument}$\=/convergent against $c$, so the sequence $\big(\mult{a+\I\lambda\Unit}[b_n]\big)_{n\in \NN}$
  in the image of $\mathcal{A}_\CC \big/ \mathcal{G}_\Phi$ under $\mult{a+\I\lambda\Unit}$
  is $\seminorm{\Phi}{\argument}$\=/convergent against $[c]$.

  It only remains to show that \refitem{item:staralg:3} implies \refitem{item:staralg:1},
  so assume that $\mult{a}$ is essentially self-adjoint. Given $c\in \mathcal{A}^\cl_\CC$,
  then
  $\abs[\big]{\skal{[c]}{[ab]}_\Phi} = \abs[\big]{\Phi_\CC(c^*ab)} = \abs[\big]{\skal{[a^*c]}{[b]}_\Phi} 
  \le \seminorm[\big]{\Phi}{[a^*c]} \seminorm[\big]{\Phi}{[b]}$ holds for all $b\in\mathcal{A}_\CC$.
  So $\mathcal{A}_\CC \big/ \mathcal{G}_\Phi \ni [b] \mapsto \skal{[c]}{[ab]}_\Phi \in \CC$ is $\seminorm{\Phi}{\argument}$\=/continuous,
  which shows that $[c]$ is an element of the domain of the adjoint of $\mult{a}$.
  As $\mult{a}$ is essentially self-adjoint, $[c]$ is also an element of the domain
  of the closure of $\mult{a}$, so there exists a sequence $\big([b_n]\big)_{n\in \NN}$ in
  $\mathcal{A}_\CC \big/ \mathcal{G}_\Phi$ that is $\seminorm{\Phi}{\argument}$\=/convergent against $[c]$ and such that
  $\big([ab_n]\big)_{n\in \NN}$ is a $\seminorm{\Phi}{\argument}$\=/Cauchy sequence. Consequently, a corresponding
  sequence of representatives $(b_n)_{n\in \NN}$ in $\mathcal{A}_\CC$ is $\seminorm{\Phi}{\argument}$\=/convergent
  against $c$ and $(ab_n)_{n\in \NN}$ is a $\seminorm{\Phi}{\argument}$\=/Cauchy sequence, 
  hence $\seminorm{\Phi}{\argument}$\=/convergent against
  $ac$ by the previous Lemma~\ref{lemma:closable}. This shows $c\in \mathcal{A}^\cl_\CC$, 
  hence $\mathcal{A}^\cl_\CC = \mathcal{A}^\acl_\CC$.

  Finally, given $b\in \mathcal{A}^\cl_\CC$, then it only remains to show that 
  $(a+\I\lambda\Unit)^{-1}b \in \mathcal{A}^\cl_\CC$ for both $\lambda\in\{-1,+1\}$. 
  As $\mult{a}$ is essentially self-adjoint, the image of 
  $\mathcal{A}_\CC \big/ \mathcal{G}_\Phi$ under $\mult{a+\I\lambda\Unit}$ is dense in
  $\mathcal{A}_\CC \big/ \mathcal{G}_\Phi$, hence in $\Hilb_\Phi$, and so there exists
  a sequence $(c_n)_{n\in \NN}$ in $\mathcal{A}_\CC$ such that the sequence
  $\big(\mult{a+\I\lambda\Unit}[c_n]\big)_{n\in \NN}$ in $\mathcal{A}_\CC \big/ \mathcal{G}_\Phi$
  is $\seminorm{\Phi}{\argument}$\=/convergent against $[b] \in \Hilb_\Phi$. 
  It follows that the sequence $\big((a+\I\lambda\Unit)c_n\big)_{n\in \NN}$ in $\mathcal{A}_\CC$
  is $\seminorm{\Phi}{\argument}$\=/convergent against $b$ because $\seminorm{\Phi}{d} = \seminorm{\Phi}{[d]}$
  even for all $d\in \mathcal{A}^\cl_\CC$. The $\seminorm{\Phi}{\argument}$\=/continuity
  of the left multiplication with 
  $(a+\I\lambda\Unit)^{-1}$ now shows that the sequence $(c_n)_{n\in \NN}$ in $\mathcal{A}_\CC$
  is $\seminorm{\Phi}{\argument}$\=/convergent against $(a+\I\lambda\Unit)^{-1}b$, 
  so $(a+\I\lambda\Unit)^{-1}b\in\mathcal{A}^\cl_\CC$.
\end{proof}
\subsection{Application: Determinacy of Moment Problems}
For the rest of this section, let $\mathcal{I}$ be a Riesz ideal and unital subalgebra of $\Stetig(X)$
and $\Phi \colon \mathcal{I} \to \RR$ a positive linear map and strictly $\mathcal{I}$\=/continuous.
Moreover, let 
$\mathcal{I}_{\CC} \coloneqq \set{f_r + \I f_i \colon X \to \CC}{f_r,f_i \in \mathcal{I}}$ be 
the complexification of $\mathcal{I}$, which is a commutative $^*$\=/algebra of continuous 
complex-valued functions from $X$ to $\CC$ with pointwise complex conjugation as $^*$\=/involution.
Then $\Phi$ can be extended linearly to a positive Hermitian linear map $\Phi_\CC\colon \mathcal{I}_\CC\to \CC$
and the complex linear span $\mathcal{A}_\CC$ in $\mathcal{I}_\CC$ of every subalgebra 
$\mathcal{A}$ of $\mathcal{I}$ is a unital $^*$\=/subalgebra. This allows to apply
the notions and results for $^*$\=/algebras from the previous Section~\ref{sec:staralg}.
The main aim is to derive a sufficient condition for $\mathcal{A}_\CC$ to be dense in 
$\mathcal{I}_\CC$ with respect to the seminorm $\seminorm{\Phi}{\argument}$ constructed out 
of $\Phi_\CC$ like before.
\begin{proposition} \label{proposition:closures}
  Let $S_\CC$ be a $\seminorm{\Phi}{\argument}$-closed subset of $\mathcal{I}_\CC$, then 
  $S \coloneqq S_\CC \cap \mathcal{I}$ is $\mathcal{I}$\=/closed.
\end{proposition}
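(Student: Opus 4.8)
The plan is to reduce the $\mathcal{I}$-closedness of $S$ to the $\seminorm{\Phi}{\argument}$-closedness of $S_\CC$ by showing that strict $\mathcal{I}$-convergence is stronger than $\seminorm{\Phi}{\argument}$-convergence for sequences in $\mathcal{I}$, viewed inside $\mathcal{I}_\CC$ via the embedding $\mathcal{I}\subseteq\mathcal{I}_\CC$. So let $(s_n)_{n\in\NN}$ be a sequence in $S$ that is strictly $\mathcal{I}$-convergent against some $\hat{s}\in\mathcal{I}$; I have to show $\hat{s}\in S$. Since $\hat{s}\in\mathcal{I}$ already and $S = S_\CC\cap\mathcal{I}$, it suffices to prove $\hat{s}\in S_\CC$. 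Because the $\seminorm{\Phi}{\argument}$-topology on $\mathcal{I}_\CC$ is first countable, $S_\CC$ being $\seminorm{\Phi}{\argument}$-closed means precisely that it contains the $\seminorm{\Phi}{\argument}$-limit of every $\seminorm{\Phi}{\argument}$-convergent sequence in $S_\CC$; so it is enough to check that the sequence $(s_n)_{n\in\NN}$, which lies in $S\subseteq S_\CC$, is $\seminorm{\Phi}{\argument}$-convergent against $\hat{s}$.

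To see this, I would fix a decreasing sequence $(f_k)_{k\in\NN}$ in $\mathcal{I}$ with pointwise infimum $0$ witnessing the strict $\mathcal{I}$-convergence, so that for every $k\in\NN$ there is an $N\in\NN$ with $\abs{\hat{s}-s_n}\le f_k$ for all $n\in\NN$ with $n\ge N$. Since $\hat{s}-s_n$ is a real-valued function, $(\hat{s}-s_n)^*(\hat{s}-s_n) = (\hat{s}-s_n)^2 = \abs{\hat{s}-s_n}^2$; here I use crucially that $\mathcal{I}$ is not merely a Riesz ideal but also a (unital) subalgebra of $\Stetig(X)$, so that squares of its elements again lie in $\mathcal{I}$. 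From $0\le \abs{\hat{s}-s_n}\le f_k$ I obtain $0\le (\hat{s}-s_n)^2 \le f_k^2$ pointwise, and hence, by positivity of $\Phi$ and the fact that $\Phi_\CC$ restricts to $\Phi$ on $\mathcal{I}$, the estimate $\seminorm{\Phi}{\hat{s}-s_n}^2 = \Phi\big((\hat{s}-s_n)^2\big) \le \Phi(f_k^2)$ for all $n\ge N$.

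It then remains to control $\Phi(f_k^2)$ as $k\to\infty$. The sequence $(f_k^2)_{k\in\NN}$ is again a decreasing sequence in $\mathcal{I}$ — since $0\le f_{k+1}\le f_k$ forces $f_{k+1}^2\le f_k^2$ pointwise — with pointwise infimum $0$, so by the strict $\mathcal{I}$-continuity of $\Phi$ the decreasing sequence $\big(\Phi(f_k^2)\big)_{k\in\NN}$ in $\RR$ has infimum $0$, i.e. $\Phi(f_k^2)\to 0$. Combining this with the previous estimate, for any $\epsilon\in{]0,\infty[}$ I may choose $k$ with $\Phi(f_k^2)<\epsilon^2$ and then the corresponding $N$, so that $\seminorm{\Phi}{\hat{s}-s_n}<\epsilon$ for all $n\ge N$; this is exactly $\seminorm{\Phi}{\argument}$-convergence of $(s_n)_{n\in\NN}$ against $\hat{s}$. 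As noted, this forces $\hat{s}\in S_\CC$ and therefore $\hat{s}\in S_\CC\cap\mathcal{I} = S$, which proves that $S$ is $\mathcal{I}$-closed.

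The only point that requires any care is the passage from the purely order-theoretic domination $\abs{\hat{s}-s_n}\le f_k$ to a quantitative bound on the Hilbert seminorm $\seminorm{\Phi}{\argument}$; but this is handled cleanly by squaring the dominating sequence $(f_k)_{k\in\NN}$ — which stays inside $\mathcal{I}$ because $\mathcal{I}$ is an algebra, stays decreasing with pointwise infimum $0$, and has image under $\Phi$ converging to $0$ by strict $\mathcal{I}$-continuity. Everything else is bookkeeping about the embedding $\mathcal{I}\hookrightarrow\mathcal{I}_\CC$ and the elementary fact that in a first countable space a set is closed iff it is sequentially closed.
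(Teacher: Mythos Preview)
Your proof is correct and follows essentially the same route as the paper: show that strict $\mathcal{I}$\=/convergence implies $\seminorm{\Phi}{\argument}$\=/convergence by squaring the dominating sequence $(f_k)_{k\in\NN}$ and invoking the strict $\mathcal{I}$\=/continuity of $\Phi$ to get $\Phi(f_k^2)\to 0$. Your added remarks on first countability and on the algebra hypothesis for $\mathcal{I}$ are accurate but not essential to the argument.
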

\begin{proof}
  Let $(s_n)_{n\in \NN}$ be a sequence in $S$ that is $\mathcal{I}$\=/convergent 
  against some $\hat{s} \in \mathcal{I}$, i.e. there exists a decreasing sequence
  $(f_k)_{k\in \NN}$ in $\mathcal{I}$ with pointwise infimum $0$ and such that for all $k\in \NN$
  there is an $N\in \NN$ with the property that $\abs{\hat{s}-s_n} \le f_k$ for all $n\in \NN$
  with $n\ge N$.
  Then the sequence $(f_k^2)_{k\in \NN}$ is also decreasing with pointwise infimum
  $0$, so $\NN \ni k \mapsto \Phi(f_k^2) \in \RR$ is a decreasing sequence in $\RR$ 
  with infimum $0$ because $\Phi$ is strictly $\mathcal{I}$\=/continuous by assumption.
  So given $\epsilon \in {]0,\infty[}$, then there exists a $k\in \NN$ such that 
  $\Phi(f_k^2) \le \epsilon^2$ and thus also an $N \in \NN$
  for which $
    \seminorm{\Phi}{\hat{s}-s_n}^2
    = 
    \Phi\big( (\hat{s}-s_n)^2 \big)
    \le 
    \Phi\big(f_k^2\big)
    \le
    \epsilon^2$
  for all $n\in \NN$ with $n\ge N$, i.e. $(s_n)_{n\in \NN}$ converges against $\hat{s}$
  with respect to $\seminorm{\Phi}{\argument}$ and thus $\hat{s} \in S$.
\end{proof}
In other words, the above Proposition~\ref{proposition:closures} shows that the
topology corresponding to the $\mathcal{I}$\=/closed subsets on $\mathcal{I}$ is stronger
than the (relative) $\seminorm{\Phi}{\argument}$\=/topology for all strictly $\mathcal{I}$\=/continuous
positive Hermitian linear maps $\Phi\colon \mathcal{I}\to \RR$.
This now allows to derive a variant of \cite[Thm.~14.2]{schmuedgen:TheMomentProblem}:
\begin{theorem} \label{theorem:determinacy}
  Let $\mathcal{A}$ be a unital subalgebra of $\mathcal{I}$ that contains a point-separating 
  subset $S$. Using the notatation introduced in Section~\ref{sec:staralg}, the following is equivalent:
  \begin{enumerate}
    \item $\mathcal{A}^\acl_\CC = \mathcal{I}_\CC$ for all $a\in S$.
    \label{enum:determinacy:1}
    \item For every $a\in S$ the symmetric operator $\mult{a}$ in the GNS-representation 
    of $\mathcal{A}_\CC$ with respect to $\Phi_\CC$ is essentially self-adjoint.
    \label{enum:determinacy:2}
  \end{enumerate}
  If one, hence both of these equivalent statements are true, then also $\mathcal{A}^\cl_\CC=\mathcal{I}_\CC$.
\end{theorem}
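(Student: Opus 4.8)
The plan is to apply Proposition~\ref{proposition:staralg} to each $a\in S$ individually and to connect the resulting statements about $\seminorm{\Phi}{\argument}$\=/closures with the algebraic Stone--Weierstraß Theorem~\ref{theorem:convergeAlg} by means of Proposition~\ref{proposition:closures}. First I would note that the hypotheses of Proposition~\ref{proposition:staralg} hold for every $a\in S$: since $S\subseteq\mathcal{A}\subseteq\mathcal{I}\subseteq\Stetig(X)$, each $a\in S$ is real-valued, so $a=a^*$ in $\mathcal{I}_\CC$, and $a+\I\lambda\Unit$ is invertible in $\mathcal{I}_\CC$ with inverse $(a-\I\lambda\Unit)(a^2+\Unit)^{-1}$, a uniformly bounded continuous function and hence an element of $\Stetig_b(X)\subseteq\mathcal{I}\subseteq\mathcal{I}_\CC$; thus the conditions \refitem{item:staralg:1}--\refitem{item:staralg:3} of Proposition~\ref{proposition:staralg} are equivalent for each such $a$. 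I would also use throughout that $\mathcal{A}^\cl_\CC$ (unlike $\mathcal{A}^\acl_\CC$) is independent of $a$, that it is a complex\=/linear subspace of $\mathcal{I}_\CC$ as the $\seminorm{\Phi}{\argument}$\=/closure of the complex\=/linear subspace $\mathcal{A}_\CC$, and that $\Unit\in\mathcal{A}^\cl_\CC$. The implication \refitem{enum:determinacy:1}$\Rightarrow$\refitem{enum:determinacy:2} is then immediate: if $\mathcal{A}^\acl_\CC=\mathcal{I}_\CC$ for all $a\in S$, then condition \refitem{item:staralg:2} holds trivially for each such $a$ (because $\mathcal{I}_\CC$ is an algebra containing $(a+\I\lambda\Unit)^{-1}$), hence so does \refitem{item:staralg:3}, i.e.\ $\mult{a}$ is essentially self-adjoint; and $\mathcal{A}^\acl_\CC\subseteq\mathcal{A}^\cl_\CC\subseteq\mathcal{I}_\CC$ then forces $\mathcal{A}^\cl_\CC=\mathcal{I}_\CC$, so the final assertion holds in this case as well.

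For \refitem{enum:determinacy:2}$\Rightarrow$\refitem{enum:determinacy:1} together with the final assertion I would assume every $\mult{a}$, $a\in S$, essentially self-adjoint, so that Proposition~\ref{proposition:staralg} yields condition \refitem{item:staralg:1} for each $a\in S$: $\mathcal{A}^\cl_\CC=\mathcal{A}^\acl_\CC$ and $(a+\I\lambda\Unit)^{-1}b\in\mathcal{A}^\cl_\CC$ for all $b\in\mathcal{A}^\cl_\CC$ and both $\lambda\in\{-1,+1\}$. I would then introduce $T\coloneqq\set[\big]{m\in\mathcal{I}_\CC}{m\,\mathcal{A}^\cl_\CC\subseteq\mathcal{A}^\cl_\CC}$, which is a unital complex subalgebra of $\mathcal{I}_\CC$ (since $\mathcal{A}^\cl_\CC$ is a complex\=/linear subspace), which contains $(a+\I\Unit)^{-1}$ and $(a-\I\Unit)^{-1}$ for every $a\in S$ by the above, and which satisfies $T\subseteq\mathcal{A}^\cl_\CC$ because $\Unit\in\mathcal{A}^\cl_\CC$ and $m=m\,\Unit$. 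From the identities $(a^2+\Unit)^{-1}=(a+\I\Unit)^{-1}(a-\I\Unit)^{-1}$ and $a(a^2+\Unit)^{-1}=\tfrac12\big((a+\I\Unit)^{-1}+(a-\I\Unit)^{-1}\big)$ it follows that the real\=/valued, uniformly bounded functions $(a^2+\Unit)^{-1}$ and $a(a^2+\Unit)^{-1}$ lie in $T$ for all $a\in S$. Let $\mathcal{B}$ be the real\=/linear subalgebra of $\Stetig_b(X)$ generated by $\Unit$ and all of these functions; then $\mathcal{B}\subseteq T\subseteq\mathcal{A}^\cl_\CC$ and $\mathcal{B}\subseteq\Stetig_b(X)\subseteq\mathcal{I}$, so $\mathcal{B}\subseteq\mathcal{A}^\cl_\CC\cap\mathcal{I}$.

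The key step is to verify that $\mathcal{B}$ is point-separating. Given $x,y\in X$ with $x\neq y$ I would choose $a\in S$ with $a(x)\neq a(y)$ and set $t\coloneqq a(x)$, $s\coloneqq a(y)$: if $t^2\neq s^2$ then $(a^2+\Unit)^{-1}$ takes the distinct values $(t^2+1)^{-1}$ and $(s^2+1)^{-1}$ at $x$ and $y$; otherwise $t=-s$ and, since $t\neq s$, also $t\neq 0$, so $a(a^2+\Unit)^{-1}$ takes the distinct values $t(t^2+1)^{-1}$ and $-t(t^2+1)^{-1}$ at $x$ and $y$. As $\Unit\in\mathcal{B}$, the linear subspace $\mathcal{B}$ is therefore strictly point-separating and nowhere-vanishing, so Theorem~\ref{theorem:convergeAlg} shows that the $\mathcal{I}$\=/closure of $\mathcal{B}$ is whole $\mathcal{I}$. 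On the other hand, by Proposition~\ref{proposition:closures} the set $\mathcal{A}^\cl_\CC\cap\mathcal{I}$ is $\mathcal{I}$\=/closed, and since it contains $\mathcal{B}$ it contains the $\mathcal{I}$\=/closure of $\mathcal{B}$; hence $\mathcal{I}\subseteq\mathcal{A}^\cl_\CC$. Since $\mathcal{A}^\cl_\CC$ is a complex\=/linear subspace, this gives $\mathcal{I}_\CC=\mathcal{I}+\I\mathcal{I}\subseteq\mathcal{A}^\cl_\CC$, i.e.\ $\mathcal{A}^\cl_\CC=\mathcal{I}_\CC$, which is the final assertion; and then condition \refitem{item:staralg:1} gives $\mathcal{A}^\acl_\CC=\mathcal{A}^\cl_\CC=\mathcal{I}_\CC$ for every $a\in S$, which is \refitem{enum:determinacy:1}.

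I expect the main obstacle to be precisely the construction of the bounded, point-separating subalgebra $\mathcal{B}$ inside $\mathcal{A}^\cl_\CC$: since $\mathcal{A}^\cl_\CC$ need not be a subalgebra of $\mathcal{I}_\CC$, one cannot simply form products, and the essential input is that it is a module over the algebra generated by the resolvents $(a\pm\I\Unit)^{-1}$ --- which is exactly where essential self-adjointness is used, through Proposition~\ref{proposition:staralg} --- together with the elementary but necessary check that $(a^2+\Unit)^{-1}$ and $a(a^2+\Unit)^{-1}$ still separate every pair of points separated by $a$. Once this subalgebra is available, the combination of Theorem~\ref{theorem:convergeAlg} and Proposition~\ref{proposition:closures} concludes the argument without further difficulty.
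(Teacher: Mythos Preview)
Your proposal is correct and follows essentially the same route as the paper's own proof: both directions rely on Proposition~\ref{proposition:staralg}, and the nontrivial implication builds a point-separating unital subalgebra of bounded functions out of the resolvents $(a\pm\I\Unit)^{-1}$, places it inside $\mathcal{A}^\cl_\CC\cap\mathcal{I}$, and then combines Theorem~\ref{theorem:convergeAlg} with Proposition~\ref{proposition:closures}. The only differences are cosmetic: the paper defines $\mathcal{B}$ as the Hermitian part of the complex algebra generated by all resolvents and argues point-separation via ``real or imaginary part of $(a+\I\Unit)^{-1}$'', whereas you work directly with the explicit real generators $(a^2+\Unit)^{-1}$ and $a(a^2+\Unit)^{-1}$ and introduce the multiplier set $T$ to make the module structure explicit; these amount to the same thing.
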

\begin{proof}
  Note that $a+\I\lambda \Unit$ is invertible in $\mathcal{I}_\CC$ for both $\lambda \in \{-1,+1\}$
  and all $a\in S$. So the first point implies the second due to the equivalence of \refitem{item:staralg:2}
  and \refitem{item:staralg:3} in Proposition~\ref{proposition:staralg}. 
  
  Conversely, assume that all $\mult{a}$ with $a\in S$ are essentially self-adjoint.
  Let $\mathcal{B}_{\CC}$ be the complex unital subalgebra of $\mathcal{I}_\CC$ that
  is generated by all $(a+ \I\lambda \Unit)^{-1}$ with $a\in S$ and both $\lambda\in \{-1,+1\}$,
  then $\mathcal{B}_{\CC}$ is a unital $^*$\=/subalgebra.
  From the equivalence of \refitem{item:staralg:3} and \refitem{item:staralg:1}
  in Proposition~\ref{proposition:staralg} it follows that 
  $\mathcal{A}^\acl_\CC = \mathcal{A}^\cl_\CC$ for all $a\in S$, so it only remains
  to show that $\mathcal{A}^\cl_\CC=\mathcal{I}_\CC$. Moreover, it also follows 
  $(a+\I\lambda)^{-1} b \in \mathcal{A}^\cl_\CC$ for all $a\in S$, both $\lambda \in \{-1,+1\}$
  and all $b\in \mathcal{A}^\cl_\CC$. This implies $\mathcal{B}_{\CC}\subseteq \mathcal{A}_\CC^\cl$
  because $\Unit\in\mathcal{A}_\CC^\cl$.  
  Now let $\mathcal{B} \coloneqq \mathcal{B}_{\CC}\cap\mathcal{I}$ 
  be the real unital subalgebra of Hermitian elements in $\mathcal{B}_{\CC}$.
  By construction, $\mathcal{B}$ consists of uniformly bounded functions 
  only. As $S$ is point-separating, $\mathcal{B}$ is also point-separating:
  Indeed, given $x,y\in X$ with $x\neq y$, then there exists $a\in S$ with
  $a(x)\neq a(y)$, thus $(a+\I\Unit)^{-1}(x) \neq (a+\I\Unit)^{-1}(y)$ and at 
  least one of the pointwise real or imaginary part of $(a+\I\Unit)^{-1}$,
  which are both elements of $\mathcal{B}$, separates $x$ and $y$.
  So $\mathcal{A}^\cl_\CC \cap \mathcal{I}$ contains the point-separating 
  unital subalgebra $\mathcal{B}$ of $\Stetig_b(X)$ and is $\mathcal{I}$\=/closed 
  by the previous Proposition~\ref{proposition:closures}. Thus 
  Theorem~\ref{theorem:convergeAlg} applies and shows that $\mathcal{A}^\cl_\CC \cap \mathcal{I}=\mathcal{I}$,
  therefore $\mathcal{A}_\CC^\cl = \mathcal{I}_\CC$.
\end{proof}
\begin{example}
  Let $\mathcal{I} \coloneqq \set[\big]{f\in \Stetig(\RR^n)}{\exists_{p\in \RR[x_1,\dots,x_n]} \forall_{x\in \RR^n}: \abs{f(x)} \le p(x)}$
  be the Riesz ideal and unital subalgebra of $\Stetig(\RR^n)$ of polynomially bounded functions. Let
  $S \coloneqq \{x_1,\dots,x_n\} \subseteq \mathcal{I}$ be the set of coordinate functions on $\RR^n$, 
  which separates the points of $\RR^n$ and generates the unital subalgebra $\mathcal{A}$ of polynomial functions.
  Given a positive Borel measure $\mu$ on $\RR^n$ for which all polynomial functions are integrable,
  then all functions in $\mathcal{I}$ are integrable with respect to $\mu$ and
  the positive linear map $\Phi \colon \mathcal{I} \to \RR$, $f \mapsto \Phi(f) \coloneqq \int_{\RR^n} f\,\D\mu$
  is strictly $\mathcal{I}$\=/continuous like in Example~\ref{example:integrals}.
  By the above Theorem~\ref{theorem:determinacy}, essential self-adjointness of all the multiplication operators 
  with $a \in \{x_1,\dots,x_n\}$ in the GNS-representation of $\mathcal{A}_\CC$
  with respect to $\Phi_\CC$ is equivalent to the ``strong determinacy'' of $\mu$.
  This reproduces \cite[Thm.~14.2]{schmuedgen:TheMomentProblem}. The 
  Stone-Weierstraß-type Theorem~\ref{theorem:convergeAlg} used here essentially replaces 
  the application of spectral theory there -- the operator theoretic considerations 
  are roughly the same.
\end{example}
Note that Theorem~\ref{theorem:determinacy} actually does give a sufficient condition
for two strictly $\mathcal{I}$\=/continuous positive Hermitian maps $\Psi,\Psi' \colon \mathcal{I} \to \RR$
to coincide: If $\Psi$ and $\Psi'$ coincide on a unital subalgebra
$\mathcal{A}$ of $\mathcal{I}$, then they also coincide with $\Phi \coloneqq (\Psi+\Psi')/2$
on $\mathcal{A}$.
Statement \refitem{enum:determinacy:2} in Theorem~\ref{theorem:determinacy}
only depends on the restriction of $\Phi$ to $\mathcal{A}$, hence can be checked
equivalently using one of $\Psi$ or $\Psi'$. If this is fulfilled, then
$\mathcal{A}_\CC^\cl = \mathcal{I}_\CC$, which implies that $\Psi$ and $\Psi'$
coincide on whole $\mathcal{I}$ because they are both $\seminorm{\Phi}{\argument}$\=/continuous.

\footnotesize
\renewcommand{\arraystretch}{0.5}

\end{onehalfspace}
\end{document}